\newcommand*\Laplace{\mathop{}\!\mathbin\bigtriangleup}
\newcommand{\shom}{\textrm{hom}}  
\providecommand{\dx}{\, \mathrm{d} x}
\providecommand{\dy}{\, \mathrm{d} y}
\providecommand{\dz}{\, \mathrm{d} z}
\newcommand{\en}[1]{\left< #1 \right>}  
\providecommand{\p}[1]{\left(#1\right)}
\newcommand{\supp}{\mathop{\mathrm{supp}}}
\renewcommand{\p}[1]{\left( #1 \right)}
\providecommand{\R}{\mathbb{R}}
\providecommand{\N}{\mathbb{N}}
\newcommand{\step}[1]{\noindent \textit{Step} #1.}
\newtheorem{theorem}{Theorem}
\newtheorem{lemma}[theorem]{Lemma}
\newtheorem{definition}[theorem]{Definition}
\newtheorem{proposition}[theorem]{Proposition}
\begin{document}

\title[Second-order correctors in random homogenization]{Stochastic Homogenization of Linear Elliptic Equations: Higher-Order Error Estimates in Weak Norms via Second-Order Correctors}
\author{Peter Bella}
\address{Institute of Mathematics,
University of Leipzig, Augustusplatz 10, 04109 Leipzig, Germany}\email{peter.bella@mis.mpg.de}
\author{Benjamin Fehrman}
\address{Max Planck Institute for Mathematics in the Sciences, Inselstrasse 22, 04103 Leipzig, Germany}\email{benjamin.fehrman@mis.mpg.de}
\author{Julian Fischer}
\address{Max Planck Institute for Mathematics in the Sciences, Inselstrasse 22, 04103 Leipzig, Germany}
\email{julian.fischer@mis.mpg.de}
\author{Felix Otto}
\address{Max Planck Institute for Mathematics in the Sciences, Inselstrasse 22, 04103 Leipzig, Germany}
\email{otto@mis.mpg.de}

\begin{abstract}
We are concerned with the homogenization of second-order linear elliptic equations with random coefficient fields. For symmetric coefficient fields with only short-range correlations, quantified through a logarithmic Sobolev inequality for the ensemble, we prove that when measured in weak spatial norms, the solution to the homogenized equation provides a higher-order approximation of the solution to the equation with oscillating coefficients. In the case of nonsymmetric coefficient fields, we provide a higher-order approximation (in weak spatial norms) of the solution to the equation with oscillating coefficients in terms of solutions to constant-coefficient equations. In both settings, we also provide optimal error estimates for the two-scale expansion truncated at second order. Our results rely on novel estimates on the second-order homogenization corrector, which we establish via sensitivity estimates for the second-order corrector and a large-scale $L^p$ theory for elliptic equations with random coefficients. Our results also cover the case of elliptic systems.
\end{abstract}

\thanks{The first author is supported by the DFG grant BE 5922/1.1. The second author is supported by the National Science Foundation Mathematical Sciences Postdoctoral Research Fellowship under Grant Number 1502731}

\maketitle

\section{Introduction}

In the present work, we study the homogenization of linear elliptic equations of the form
\begin{align*}
-\nabla \cdot (a\nabla u) = f.
\end{align*}
According to the quantitative theory of stochastic homogenization, for \emph{random coefficient fields} with typical length of correlation $\varepsilon$ on $\mathbb{R}^d$, one may approximate the solution $u$ to this problem with microstructure by the solution $u_{\shom}$ of a constant-coefficient \emph{effective equation}
\begin{align*}
-\nabla \cdot (a_{\shom}\nabla u_{\shom}) = f
\end{align*}
up to an average error of the order of $\varepsilon$ (in case of three or more spatial dimensions and smooth compactly supported $f$). More precisely, for coefficient fields with finite range of dependence, one obtains an error estimate of the form
\begin{align}
\label{ErrorEstimateuhom}
\left(\fint_{\{|x|\leq 1\}}|u-u_{\shom}|^2 \,dx\right)^{1/2} \leq \mathcal{C}(a) \varepsilon
\end{align}
with a random constant $\mathcal{C}(a)$ subject to an estimate of the form
\begin{align*}
\mathbb{E}[\exp(\mathcal{C}(a)^{2})]\leq C(f).
\end{align*}
Note that the scaling of the estimate \eqref{ErrorEstimateuhom} with respect to $\varepsilon$ is optimal, as can for example be seen by our $H^1$-norm error estimate for the two-scale expansion truncated at second order in Theorem~\ref{ErrorEstimateSymmetric} below.

The main goal of the present paper is to establish a \emph{higher-order approximation result} for solutions to the microscopic problem (with respect to the typical length of correlations $\varepsilon$) in terms of solutions to the effective equation, as compared to the estimate \eqref{ErrorEstimateuhom}. In the case of three or four spatial dimensions $d=3$ or $d=4$ and for \emph{symmetric} coefficient fields $a$, we shall prove that in a weaker spatial norm the approximation $u\approx u_{\shom}$ actually achieves an average error approximately of the order of $\varepsilon^{d/2}$: We shall establish the bound
\begin{align}
\label{ImprovedEstimate}
\big|\big| u-u_{\shom} \big|\big|_{H^{-1}(\{|x|\leq 1\})}
\leq
\begin{cases}
\mathcal{C}(a) \varepsilon^{3/2}&\text{for }d=3,
\\
\mathcal{C}(a) \varepsilon^2 |\log \varepsilon| &\text{for }d=4,
\\
\mathcal{C}(a) \varepsilon^2 &\text{for }d\geq 5,
\end{cases}
\end{align}
with a random constant $\mathcal{C}(a)$ having stretched exponential moments in the sense
\begin{align*}
\mathbb{E}[\exp(\mathcal{C}(a)^{1/C})]\leq C
\end{align*}
for some $C>0$. Note that while the dependence of our error bound on $\varepsilon$ is (almost) optimal, we do not expect the moment bound on the random constant $\mathcal{C}(a)$ to be optimal.

Approximation results of this type are of particular interest in the case of stochastic homogenization: In the periodic homogenization of elliptic PDEs, a higher-order approximation of the solution $u$ to the problem with oscillating coefficients is easily obtained even in $L^p$ spaces by the two-scale expansion (see \eqref{TwoScaleExpansion} below); note that in the periodic setting the corrector may be computed cheaply by exploiting its periodicity. In contrast, in stochastic homogenization the computation of the corrector is as expensive as computing the solution to the equation with oscillating coefficients itself, making a computation of the corrector infeasible.

If the coefficient field $a$ is non-symmetric, we also obtain a higher-order approximation of the solution to the microscopic problem in terms of solutions to macroscopic problems (that is, in terms of constant-coefficient equations), however at the expense of having to solve an additional macroscopic equation: We obtain the approximation results
\begin{align*}
\big|\big|u-u_{\shom}-\varepsilon u_{\shom}^1 \big|\big|_{H^{-1}(\{|x|\leq 1\})}
\leq
\begin{cases}
\mathcal{C}(a) \varepsilon^{3/2}&\text{for }d=3,
\\
\mathcal{C}(a) \varepsilon^2 |\log \varepsilon| &\text{for }d=4,
\\
\mathcal{C}(a) \varepsilon^2 &\text{for }d\geq 5,
\end{cases}
\end{align*}
where $u_{\shom}^1$ is the solution to a constant-coefficient equation of the form
\begin{align}
\label{EffectiveEquationFirst}
-\nabla \cdot (a_{\shom}\nabla u_{\shom}^1) = \sum_{i,j,k=1}^d a_{\shom,ijk}^1 \partial_i \partial_j \partial_k u_{\shom}.
\end{align}
Here, the constants $a_{\shom,ijk}^1$ depend only on the statistics of the random coefficient field and are given by the formula \eqref{Defa1ijk} below.

Our results may be regarded as a quantitative counterpart of the recent work of Gu \cite{YuGu}, who establishes that higher-order homogenization correctors allow for a qualitatively better approximation of solutions to the equation with random coefficients:
At the level of the $L^2$-norm, for symmetric coefficient fields we obtain the approximation result
\begin{align*}
\Big|\Big|u-u_{\shom}-\sum_i \phi_i \partial_i u_{\shom}\Big|\Big|_{L^2(\{|x|\leq 1\})}
\leq
\begin{cases}
\mathcal{C}(a) \varepsilon^{3/2}&\text{for }d=3,
\\
\mathcal{C}(a) \varepsilon^2 |\log \varepsilon| &\text{for }d=4,
\\
\mathcal{C}(a) \varepsilon^2 &\text{for }d\geq 5,
\end{cases}
\end{align*}
while the lowest-order result of Gu \cite{YuGu} basically reads
\begin{align*}
\Big|\Big|u-u_{\shom}-\sum_i \phi_i \partial_i u_{\shom}\Big|\Big|_{L^2} \leq
\begin{cases}
o(\varepsilon) &\text{for }d=3,
\\
o(\varepsilon) &\text{for }d=4,
\\
O(\varepsilon^2) &\text{for }d\geq 5,
\end{cases}
\end{align*}
(where $\phi_i$ denotes the first-order homogenization corrector), a result that improves the bound inferred from the error estimate for the two-scale expansion \cite{GNO3} by a Sobolev embedding
\begin{align*}
\Big|\Big|u-u_{\shom}-\sum_i \phi_i \partial_i u_{\shom}\Big|\Big|_{L^2} \leq
\begin{cases}
O(\varepsilon |\log \varepsilon|) &\text{for }d=2,
\\
O(\varepsilon) &\text{for }d\geq 3.
\end{cases}
\end{align*}
Note that in dimensions $d\geq 5$, Gu constructs homogenization correctors of even higher order and corresponding approximations of $u$ to higher order in terms of the higher-order correctors and solutions to appropriate macroscopic equations. However, also note that the random constants in the estimates of Gu are only shown to have algebraic stochastic moments and that the work of Gu is restricted to symmetric coefficient fields.

Before giving an outline of our strategy, let us give a brief account of quantitative results in stochastic homogenization. The quantitative theory of stochastic homogenization has been initiated by Yurinski\u{\i} \cite{yurinski86}. Naddaf and Spencer were the first to introduce spectral gap inequalities to quantify ergodicity in stochastic homogenization \cite{naddafspencer98}. Gloria and the fourth author \cite{GO1} derived the first optimal estimates on the size of the homogenization error in the linear elliptic case, though with non-optimal stochastic integrability (i.\,e.\ non-optimal stochastic integrability of the constant $\mathcal{C}(a)$ in \eqref{ErrorEstimateuhom}). Armstrong and Smart \cite{armstrongsmart2014} obtained error estimates with optimal stochastic integrability, however with non-optimal estimates on the size of the error (i.\,e.\ non-optimal scaling of the error with respect to $\varepsilon$ in \eqref{ErrorEstimateuhom}). Recently, the picture of error estimates in stochastic homogenization has been completed by Gloria and the fourth author \cite{GloriaOttoNearOptimal} and Armstrong, Kuusi, and Mourrat \cite{ArmstrongKuusiMourrat2016}, who independently derived optimal error estimates with close-to-optimal stochastic integrability.

A more probabilistic viewpoint of stochastic homogenization of linear elliptic equations may be found e.\,g.\ in \cite{MourratVarianceDecay}. For the case of fully nonlinear elliptic equations, we refer to the works of Caffarelli and Souganidis \cite{CaffarelliSouganidis} and Armstrong and Smart \cite{ArmstrongSmartNondivergence}.

A significant simplification in our estimates below relies on the observation that for random elliptic operators, a large-scale Calder\'on-Zygmund-type $L^p$ theory may be developed, see \eqref{I1}. By ergodicity, this $L^p$ theory cheaply provides a moment bound for the gradient of the second-order corrector (see \eqref{proppsimoment}).
Calder\'on-Zygmund-type estimates for random elliptic operators have first been derived by Armstrong and Daniel \cite{ArmstrongDaniel}; in the setting of periodic homogenization, corresponding results have been obtained by Avellaneda and Lin \cite{AvellanedaLinCPAM}. Our estimate \eqref{I1} is a slightly upgraded version of a result by Duerinckx, Gloria, and the fourth author \cite{DuerinckxGloriaOtto}.
Note that further regularity properties of random elliptic operators have been explored in numerous previous works: Marahrens and the fourth author \cite{MO} have established a $C^{0,\alpha}$ regularity theory on large scales. Armstrong and Smart \cite{ArmstrongSmartNondivergence} developed a large-scale Lipschitz regularity theory; motivated by their work, Gloria, Neukamm, and the fourth author \cite{GNO4} have established a $C^{1,\alpha}$ theory. In a recent work of the third and the fourth author \cite{FischerOtto}, higher-order homogenization correctors have been introduced for the first time in the setting of stochastic homogenization to develop a large-scale $C^{k,\alpha}$ regularity theory; however, the focus being on the regularity result, the estimates on the higher-order correctors in \cite{FischerOtto} are non-optimal in the case of fast decorrelation. Closely associated with questions of large-scale regularity are Liouville principles, see for example \cite{ArmstrongKuusiMourrat2016,BellaFehrmanOtto,BenjaminiCopinKozmaYadin,FischerOtto,FischerRaithel,GNO4}. 

\section{Basic Concepts in Homogenization of Elliptic PDEs}

A crucial concept to both deterministic and stochastic homogenization is the concept of \emph{homogenization correctors}. It is an immediate observation that solutions $u_{\shom}$ to a constant-coefficient equation of the form $-\nabla \cdot (a_{\shom}\nabla u_{\shom})=f$ (with, say, smooth right-hand side $f$) lack the microscopic oscillations that are typically present in solutions to the equation $-\nabla \cdot (a\nabla u)=f$ when oscillations on a microscopic scale $\varepsilon$ are present in the coefficient field $a$. For this reason, solutions $u_{\shom}$ to the constant-coefficient equation are not an approximate solution to the microscopic equation: The residual $-\nabla \cdot (a\nabla u_{\shom})-f$ fails to be small, even in the $H^{-1}$ norm.

Homogenization correctors are a tool to adapt solutions $u_{\shom}$ of the constant-coefficient equation to the microscopic oscillations of the coefficient field $a$. The first-order homogenization correctors $\phi_i$, $1\leq i\leq d$, are defined by the requirement that $x\mapsto x_i+\phi_i(x)$ should be a solution to the equation $-\nabla \cdot (a\nabla u)=0$, just like $x\mapsto x_i$ is a solution to the equation $-\nabla \cdot (a_{\shom}\nabla u_{\shom})=0$. The defining equation for the \emph{first-order homogenization corrector} $\phi_i$ therefore reads
\begin{align}
\label{EquationPhi}
-\nabla \cdot (a(e_i+\nabla \phi_i))=0
\quad\quad\text{in }\mathbb{R}^d.
\end{align}
A crucial property of the homogenization corrector $\phi_i$ is its smallness, as compared to the function $x\mapsto x_i$: For example, in the setting of periodic homogenization, the corrector $\phi_i$ may be chosen to satisfy a bound of the form $||\phi_i||_{L^2(\{|x|\leq 1\})}\lesssim \varepsilon$; in the case of stochastic homogenization with sufficiently fast decay of correlations, a similar bound may be achieved for $d>2$ (and for $d=2$, a similar bound may be achieved up to a logarithmic correction).
Throughout the paper, we shall assume $\mathbb{E}[\phi_i]=0$; note that this may be achieved by adding a constant to $\phi_i$ without violating \eqref{EquationPhi}.
For a brief discussion of existence and uniqueness issues associated with the equation \eqref{EquationPhi}, see the end of the present section.

As we shall see below, for general sufficiently smooth functions $u_{\shom}$ the (first-order) \emph{two-scale expansion}
\begin{align}
\label{TwoScaleExpansion}
u_{\shom}+\sum_{i=1}^d \phi_i \partial_i u_{\shom}
\end{align}
provides a modification of $u_{\shom}$ which approximately (in the $H^{-1}$ sense) solves the PDE $-\nabla \cdot (a\nabla u)= -\nabla \cdot (a_{\shom}\nabla u_{\shom})$. In other words, the homogenization correctors provide a means of adapting a sufficiently smooth function $u_{\shom}$ to the microscopic oscillations of the coefficient field $a$ by adding a small correction. Note that both in the periodic setting and in the case of stochastic homogenization with fast decay of correlations and $d\geq 2$, the correction is approximately of the order of $\varepsilon$.

Here, the \emph{effective coefficient} $a_{\shom}$ is characterized by matching the average of the flux in the problem with oscillating coefficients to the flux in the homogenized picture: The vector $a_{\shom} \nabla x_i$ is determined as the average of $a\nabla (x_i+\phi_i)$. In stochastic homogenization, the assumptions of stationarity and ergodicity (see below) entail that averaging over larger and larger scales is equivalent to taking the expectation. Thus, in stochastic homogenization the defining condition of the effective coefficient reads
\begin{align}\nonumber
a_{\shom} e_i := \mathbb{E}[a(e_i+\nabla \phi_i)],
\end{align}
while in periodic homogenization the expectation is replaced by averaging over a single periodicity cell. Let us mention that the constant effective coefficient $a_{\shom}$ inherits the ellipticity and boundedness properties of the coefficient field $a$: If $a$ satisfies $|av|\leq |v|$ and $av\cdot v\geq \lambda |v|^2$ for all vectors $v\in \mathbb{R}^d$, then $a_{\shom}$ satisfies $|a_{\shom}v|\leq d |v|$ and $av\cdot v \geq \lambda |v|^2$ for all $v\in \mathbb{R}^d$.

To estimate the error of the two-scale expansion, it is convenient to introduce a further quantity, a \emph{vector potential} $\sigma_i$ (a skew-symmetric tensor field) for the flux correction $a(e_i+\nabla \phi_i)-a_{\shom} e_i$ in the sense
\begin{align}
\label{EquationSigma}
\nabla \cdot \sigma_{ij} = e_j \cdot \big(a(e_i+\nabla \phi_i)-a_{\shom} e_i\big).
\end{align}
Note that the ``vector potential'' $\sigma_i$ is not uniquely determined by this equation. However, after appropriately fixing the gauge for the vector potential $\sigma_i$, it may again be constructed to be small compared to linearly growing functions like $x\mapsto x_i$: In the case of periodic homogenization, a bound of the form $||\sigma_i||_{L^2(\{|x|\leq 1\}}\lesssim \varepsilon$ holds, and a similar estimate may be achieved in the case of stochastic homogenization with fast decorrelation and $d>2$ (and again for $d=2$ with a logarithmic correction).

Equipped with these definitions, a straightforward computation shows (see below) that the two-scale expansion \eqref{TwoScaleExpansion} gives rise to an approximate solution of the equation $-\nabla \cdot (a\nabla u)=-\nabla \cdot (a_{\shom}\nabla u_{\shom})$ in the sense
\begin{align}
\label{ErrorTwoScale}
&-\nabla \cdot \Big(a \nabla \Big(u_{\shom}+\sum_i \phi_i \partial_i u_{\shom}\Big)\Big)
=-\nabla \cdot (a_{\shom}\nabla u_{\shom})-\nabla \cdot \Big(\sum_i (\phi_i a-\sigma_i) \nabla \partial_i u_{\shom}\Big).
\end{align}
By smallness of $\phi_i$ and $\sigma_i$, the last term in this formula is indeed seen to be small in the $H^{-1}$ sense, at least for sufficiently smooth $u_{\shom}$ with sufficiently fast decay at infinity.

With the previous formula, it is now rather easy to deduce an error estimate for the difference between the solution $u$ to the equation with oscillating coefficients $-\nabla \cdot (a\nabla u)=f$ and the two-scale expansion $u_{\shom}+\sum_i \phi_i \partial_i u_{\shom}$ with the solution $u_{\shom}$ of the (homogenized) constant-coefficient equation $-\nabla \cdot (a_{\shom}\nabla u_{\shom})=f$: The difference $w:=u-(u_{\shom}+\sum_i \phi_i \partial_i u_{\shom})$ satisfies the equation
\begin{align*}
-\nabla \cdot (a\nabla w)=\nabla \cdot \Big(\sum_i (\phi_i a-\sigma_i) \nabla \partial_i u_{\shom}\Big).
\end{align*}
Therefore, a standard energy estimate for this equation entails
\begin{align*}
||\nabla w||_{L^2} \lesssim \sum_i ||(\phi_i a-\sigma_i) \nabla \partial_i u_{\shom}||_{L^2}.
\end{align*}
Both in the setting of periodic homogenization and in the setting of stochastic homogenization with fast decorrelation and $d\geq 3$, the above estimate provides an estimate for the homogenization error $u-u_{\shom}$ of the form
\begin{align*}
||u-u_{\shom}||_{L^2(\{|x|\leq 1\})} \lesssim \varepsilon.
\end{align*}
As already mentioned above, it turns out that the homogenization error -- when measured in the $L^2$ norm -- is actually in general of the order of $\varepsilon$, i.\,e.\ the preceding estimate is sharp: In particular, the oscillations introduced by the term $\sum_i \phi_i \partial_i u_{\shom}$ in the two-scale expansion are actually present in the solution $u$ to the problem with oscillating coefficients. This may for example be seen by the second statement in Theorem~\ref{ErrorEstimateSymmetric} below.

However, by passing to (spatially) weak norms like the $H^{-1}$ norm, it is in fact possible to derive higher-order approximations for the solution $u$ to the problem with oscillating coefficients in terms of solutions to appropriate macroscopic equations (with constant coefficients): In particular, in the periodic setting the $H^{-1}$ norm of the term $\sum_i \phi_i \partial_i u_{\shom}$ in the two-scale expansion is of the order $\varepsilon^2$; for random coefficient fields with fast decorrelation and $d=3$, it is of the order $\varepsilon^{3/2}$. In the rigorous derivation of higher-order approximation results for solutions to the equation with oscillating coefficients, the concept of higher-order homogenization correctors enters. While the first-order homogenization correctors $\phi_i$ are capable of perturbing an affine function $u_{\shom}(x):=\xi\cdot x$ to create an exact solution $u(x):= \xi \cdot x+\sum_i \xi_i \phi_i(x)$ of the equation $-\nabla \cdot (a\nabla u)=-\nabla \cdot (a_{\shom} \nabla u_{\shom})$, they fail at exactly correcting a second-order polynomial $u_{\shom}(x)=Ax\cdot x$ -- in this case, the error term is given by the last term in \eqref{ErrorTwoScale}.

The idea of \emph{second-order correctors} is to add another perturbation $\sum_{ij} \psi_{ij}\partial_i\partial_j u_{\shom}$ to the two-scale expansion in order to be able to exactly correct second-order polynomials. The defining equation of $\psi_{ij}$ therefore reads
\begin{align}
\label{Equationpsi}
-\nabla \cdot (a\nabla \psi_{ij})
=\nabla \cdot \Big((\phi_i a-\sigma_i)e_j\Big).
\end{align}
Note that in the periodic setting, one may construct a second-order corrector subject to the estimate $||\psi_{ij}||_{L^2(\{|x|\leq 1\})} \lesssim \varepsilon^2$. As we shall see below, in the case of stochastic homogenization with fast decorrelation, one can derive a bound on $||\psi_{ij}-\fint_{\{|x|\leq 1\}}\psi_{ij}||_{L^2(\{|x|\leq 1\})}$ that takes the form of the right-hand side in \eqref{ImprovedEstimate}.

Considering now a general sufficiently smooth function $u_{\shom}$, from \eqref{ErrorTwoScale} and \eqref{Equationpsi} we deduce that the second-order two-scale expansion satisfies
\begin{align}
\label{TwoScaleSecondFirstExpression}
&-\nabla \cdot \Big(a\nabla \Big(u_{\shom}+\sum_i \phi_i \partial_i u_{\shom} +\sum_{i,j} \psi_{ij} \partial_i\partial_j u_{\shom} \Big)\Big)
\\&\nonumber
=-\nabla \cdot (a_{\shom}\nabla u_{\shom})
\\&\ \ \ \,\nonumber
-\nabla \cdot \Big(\sum_{i,j} a\psi_{ij} \nabla \partial_i \partial_j u_{\shom}\Big)
-\sum_{i,j} a\nabla \psi_{ij} \cdot \nabla \partial_i \partial_j u_{\shom}
-\sum_i (\phi_i a-\sigma_i) : \nabla^2 \partial_i u_{\shom}.
\end{align}
While from the previous remark on smallness of $\psi_{ij}$ we see that the first error term -- that is the first term in the last line -- is of the desired order, the second and the third error term are not directly seen to be of higher order. Therefore, one would again like to rewrite these terms by introducing a vector potential $\Psi_{ijkl}$ (a tensor field that is skew-symmetric in its last two indices) for the difference between $a\nabla \psi_{ij} \cdot e_k+(\phi_i a-\sigma_i)e_j \cdot e_k$, symmetrized with respect to permutations of $ijk$ (only such a symmetrized version is needed due to the contraction with the third derivative of $u_{hom}$), and its average. It turns out that in the case of symmetric coefficient fields $a$, this average is actually zero: Due to ergodicity, one may treat expectations like averages over large balls, thereby justifying ``integration by parts in expectations'' in the case of test functions with sublinear growth (see for example the argument following \eqref{IntegrationByPartsExpectation}). Thus, one has by the skew-symmetry of $\sigma_{ijk}$ in the last two indices
\begin{align*}
&\operatorname{sym}_{ijk}\mathbb{E}\big[a\nabla \psi_{ij} \cdot e_k+(\phi_i a-\sigma_i)e_j \cdot e_k\big]
\\&
\overset{\eqref{EquationPhi}}{=}
\operatorname{sym}_{ijk}\mathbb{E}\big[-a\nabla \psi_{ij} \cdot \nabla \phi_k+(\phi_i a-\sigma_i)e_j \cdot e_k\big]
\\&
\overset{\eqref{Equationpsi}}{=}
\operatorname{sym}_{ijk}\mathbb{E}\big[(\phi_i a-\sigma_i)e_j \cdot \nabla \phi_k+(\phi_i a-\sigma_i)e_j \cdot e_k\big]
\\&
=
\operatorname{sym}_{ijk}\mathbb{E}\big[\sigma_{ij} \cdot \nabla \phi_k-\sigma_i e_j \cdot e_k+\phi_i a e_j \cdot (e_k+\nabla \phi_k)\big]
\\&
\overset{\eqref{EquationSigma}}{=}
\operatorname{sym}_{ijk}\mathbb{E}\big[-\phi_k e_j\cdot a(e_i+\nabla \phi_i)+a_{\shom,ij} \phi_k-\sigma_i e_j \cdot e_k+\phi_i a e_j \cdot (e_k+\nabla \phi_k)\big]
\\&
=0,
\end{align*}
where in the last step we have assumed that $\phi_i$ has vanishing expectation (note that one may add a constant to $\phi_i$ to enforce this).
Therefore, one introduces a vector potential $\Psi_{ijkl}$, skew-symmetric in the last two indices $k$ and $l$, which satisfies
\begin{align}
\label{EquationPsi}
&\nabla \cdot \Psi_{ijk}
=\operatorname{sym}_{ijk}
\big(a\nabla \psi_{ij} \cdot e_k+(\phi_i a-\sigma_i)e_j \cdot e_k - \varepsilon a_{\shom,ijk}^1\big),
\end{align}
where $a_{\shom,ijk}^1$ is given by
\begin{align}
\label{Defa1ijk}
a_{\shom,ijk}^1:=\frac{1}{\varepsilon}\operatorname{sym}_{ijk}\mathbb{E}[a\nabla \psi_{ij}\cdot e_k+(\phi_i a - \sigma_i) e_j \cdot e_k],
\end{align}
an expression that vanishes for symmetric coefficient fields (here we have introduced the scaling with respect to $\varepsilon$ to ensure that $a_{\shom,ijk}^1$ does not depend on $\varepsilon$). In stochastic homogenization, this vector potential $\Psi$ has been used first in \cite{BellaGiuntiOtto}; note that in \cite{BellaGiuntiOtto} also the existence of solutions $\psi$ and $\Psi$ to the defining equations \eqref{Equationpsi}, \eqref{EquationPsi} with stationary gradients is proven.

In homogenization, the second-order vector potential $\Psi_{ijkl}$ typically admits the same estimates as the second-order corrector $\psi_{ij}$, at least upon fixing the gauge appropriately, which we do by requiring
\begin{align}
\label{Gauge}
-\Delta \Psi_{ijkl} &:= \nabla \cdot ( q^1_{ijl} e_k - q^1_{ijk}e_l )
\end{align}
with
\begin{align*}
q^1_{ijk} &:= \operatorname{sym}_{ijk}[a\nabla \psi_{ij} \cdot e_k + (a\phi_i - \sigma_i) e_j \cdot e_k - \varepsilon a_{\shom,ijk}^1].
\end{align*}
Note that the right-hand side of \eqref{EquationPsi} has vanishing average; in the setting of non-symmetric coefficient fields $a$, this is enforced by the term $-\varepsilon a_{\shom,ijk}^1$. If the average were nonvanishing, one could not hope for a vector potential $\Psi$ with the desired smallness properties, as $\Psi$ would in general contain a linearly growing contribution.

In the case of symmetric coefficient fields $a$, the second-order two-scale expansion now adapts general sufficiently smooth functions $u_{\shom}$ to the oscillating coefficients in the sense that
\begin{align}
\label{ErrorTwoScaleSecond}
&-\nabla \cdot \Big(a\nabla \Big(u_{\shom}+\sum_i \phi_i \partial_i u_{\shom} +\sum_{i,j} \psi_{ij} \partial_i\partial_j u_{\shom} \Big)\Big)
\\&
\nonumber
=-\nabla \cdot (a_{\shom}\nabla u_{\shom})
-\nabla \cdot \Big(\sum_{i,j} (\psi_{ij}a-\Psi_{ij}) \nabla \partial_i \partial_j u_{\shom}\Big).
\end{align}
For nonsymmetric coefficient fields $a$, one additionally needs to compensate for the nonvanishing average of the term $a\nabla \psi_{ij}\cdot e_k+(\phi_i a - \sigma_i) e_j \cdot e_k$, symmetrized in $ijk$, on the right-hand side of the formula \eqref{TwoScaleSecondFirstExpression} (respectively the term $\varepsilon a_{\shom,ijk}^1$ in \eqref{EquationPsi}): One basically additionally needs to add a solution to the equation
\begin{align*}
-\nabla \cdot (a\nabla u^1) =  \underbrace{\sum_{i,j,k}\mathbb{E}[a\nabla \psi_{ij}\cdot e_k+(\phi_i a - \sigma_i) e_j \cdot e_k] \partial_i \partial_j \partial_k u_{\shom}}_{=\sum_{ijk}\varepsilon a_{\shom,ijk}^1 \partial_i \partial_j \partial_k u_{\shom}}
\end{align*}
on the left-hand side to arrive at a right-hand side like in \eqref{ErrorTwoScaleSecond}. As the right-hand side of the equation for $u^1$ is a macroscopically varying quantity, we may again invoke the homogenization ansatz: We solve the effective equation \eqref{EffectiveEquationFirst} instead, the coefficients $a_{\shom,ijk}^1$ being given by
\eqref{Defa1ijk},
and add $\varepsilon u_{\shom}^1+\varepsilon \sum_i \phi_i \partial_i u_{\shom}^1$ instead of $u^1$ (here, we have introduced the scaling with respect to $\varepsilon$ to remove the dependence of $a_{\shom,ijk}^1$ on $\varepsilon$). This yields
\begin{align}\nonumber
&-\nabla \cdot \Big(a\nabla \Big((u_{\shom}+\varepsilon u_{\shom}^1)+\sum_i \phi_i (\partial_i u_{\shom}+\varepsilon \partial_i u_{\shom}^1) +\sum_{i,j} \psi_{ij} \partial_i\partial_j u_{\shom} \Big)\Big)
\\&
\nonumber
=-\nabla \cdot (a_{\shom}\nabla u_{\shom})
-\nabla \cdot \Big(\sum_{i} \varepsilon (a\phi_i - \sigma_i) \nabla \partial_i u_{\shom}^1\Big)
-\nabla \cdot \Big(\sum_{i,j} (\psi_{ij}a-\Psi_{ij}) \nabla \partial_i \partial_j u_{\shom}\Big).
\end{align}
For the detailed derivation of the previous formula, see Section~\ref{DerivationFormulas} below.

Let us now give a few remarks on how to establish existence and smallness of correctors $\phi_i$ and $\sigma_i$, as well as the properties of $a_{\shom}$. In the setting of periodic homogenization, the natural ansatz to construct the corrector $\phi_i$ is to construct it in the class of periodic functions with square-integrable gradient and vanishing mean. In fact, the existence (and uniqueness) of the first-order homogenization correctors $\phi_i$ may be deduced using a simple Lax-Milgram argument in this space; from a scaling argument, one easily infers an estimate of the form $||\phi_i||_{L^2(\{|x|\leq 1\}} \lesssim \varepsilon$.

In contrast, in the setting of stochastic homogenization, obtaining appropriate bounds on the homogenization corrector becomes highly nontrivial and constitutes the most difficult step in the derivation of error estimates: A periodic ansatz is no longer possible and the smallness of the corrector is now caused by stochastic cancellations. Nevertheless, in the case of correlation length $\varepsilon$ (more precisely, finite range of dependence $\varepsilon$), it is possible to deduce estimates of the form
\begin{align*}
\left|\left|\phi_i-\fint_{\{|x|\leq 1\}}\phi_i \right|\right|_{L^2(\{|x|\leq 1\}}
\leq
\begin{cases}
\mathcal{C}(a) \varepsilon^{1/2}&\text{for }d=1,
\\
\mathcal{C}(a) \varepsilon \sqrt{|\log \varepsilon|} &\text{for }d=2,
\\
\mathcal{C}(a) \varepsilon &\text{for }d\geq 3,
\end{cases}
\end{align*}
with a random constant $\mathcal{C}(a)$ satisfying a bound of the form
\begin{align*}
\mathbb{E}[\exp(\mathcal{C}(a)^{2-\delta}/C(\delta))]\leq 2
\end{align*}
for any $\delta>0$ for $d=2$ and $\delta=0$ for $d\geq 3$; see \cite{ArmstrongKuusiMourrat2016,GloriaOttoNearOptimal}.

Note that the equation \eqref{EquationPhi} determines the corrector $\phi_i$ only up to $a$-harmonic functions on $\mathbb{R}^d$. However, $\phi_i$ is determined uniquely up to a random constant by requiring $\nabla \phi_i$ to be a stationary random field with finite second moments $\mathbb{E}[|\nabla \phi_i|^2]<\infty$, and it is this $\phi_i$ that is subject to the aforementioned estimate (and it is this $\phi_i$ that we shall work with throughout the paper). The classical argument to construct such a $\phi_i$ by an argument in probability space may e.\,g.\ be found in \cite{GNO4}, where also $\sigma_{ijk}$ with such properties is constructed. In our setting of fast decorrelation $d(1-\beta)/2>1$ in Definition~\ref{Definition1}, the first-order corrector $\phi_i$ itself exists as a stationary random field. After fixing the expectation $\mathbb{E}[\phi_i]=0$, the corrector $\phi_i$ is then uniquely determined by \eqref{EquationPhi} and the condition of stationarity.
A similar existence and uniqueness issue arises for the second-order corrector $\psi_{ij}$ (and also the vector potential $\Psi_{ijkl}$), see \cite{BellaGiuntiOtto} or \cite{YuGu} for the analogous construction of $\psi_{ij}$ for which $\nabla \psi_{ij}$ is a stationary random field with finite second moment (again, it is this $\psi_{ij}$ that we shall work with throughout the paper).

\section{Main Results}

In order to provide a quantification of the decay of correlations in the coefficient field $a$ (that is, a quantification of the ergodicity of the ensemble), we employ a coarsened logarithmic Sobolev inequality introduced by Gloria, Neukamm, and the fourth author \cite{GNO4}.
\begin{definition}
\label{Definition1}
Let $\langle\cdot\rangle$ be an ensemble, that is a probability measure on the space of coefficient fields $a:\mathbb{R}^d\rightarrow \mathbb{R}^{d\times d}$.

We call the ensemble bounded and uniformly elliptic if it is concentrated on the $\lambda$-uniformly elliptic coefficient fields for some $\lambda\in (0,1]$, i.\,e.\ if with probability one we have $a(x)v \cdot v\geq \lambda |v|^2$ and $|a(x)v|\leq |v|$ for all $v\in \mathbb{R}^d$ and all $x\in \mathbb{R}^d$.

We say that the ensemble is stationary if it is invariant under spatial shifts, i.\,e.\ if $a(\cdot+x)$ has the same law as $a(\cdot)$ for any $x\in \mathbb{R}^d$.

We say that the ensemble satisfies a coarsened logarithmic Sobolev inequality (LSI) with exponent $\beta\in [0,1)$ and correlation length $\varepsilon$ if for any random variable $\xi=\xi(a)$ the estimate
\begin{align}
\label{LSI}
\mathbb{E}[\xi^2 \log \xi^2]-\mathbb{E}[\xi^2]\mathbb{E}[\log \xi^2]
\leq \mathbb{E} \bigg[\left|\left|\frac{\partial\xi}{\partial a}\right|\right|_{CC}^2\bigg]
\end{align}
is satisfied, where the carr\'e-du-champ is defined by
\begin{align*}
\left|\left|\frac{\partial\xi}{\partial a}\right|\right|_{CC}^2
:=
\sum_D \left(\int_{D} \left|\frac{\partial \xi}{\partial a}(x)\right| \,dx \right)^2
\end{align*}
for a partition $(D_i)$ of $\mathbb{R}^d$ satisfying
\begin{align*}
\left(\frac{\operatorname{dist}(D_i,0)}{\varepsilon}+1\right)^\beta \leq \frac{\operatorname{diam}(D_i)}{\varepsilon}\leq C(d) \left(\frac{\operatorname{dist}(D_i,0)}{\varepsilon}+1\right)^\beta.
\end{align*}
Here, $\frac{\partial \xi}{\partial a}$ is the Fr\'echet derivative of $\xi$ with respect to the coefficient field $a=a(x)$.
\end{definition}
Note that the case $\beta=0$ (and for example a corresponding partition of $\mathbb{R}^d$ into cubes of equal size of the form $\{D_i:i\in \mathbb{N}\}:=\{[0,\varepsilon]^d+\varepsilon z:z\in \mathbb{Z}^d\}$) corresponds to a non-coarsened (classical) logarithmic Sobolev inequality. An example of an ensemble satisfying the logarithmic Sobolev inequality with $\beta=0$ and correlation length $C(d)\varepsilon$ would be the ensemble of coefficient fields that
\begin{itemize}
\item is given by $a(x):=\xi(\tilde a(x))$, where
\item $\xi:\mathbb{R} \rightarrow \mathbb{R}^{d\times d}$ is a bounded Lipschitz map taking values in the set of $\lambda$-uniformly elliptic matrices for some $\lambda>0$, and where
\item $\tilde a(x)$ is a stationary, centered, Gaussian random field subject to the covariance estimate
\begin{align*}
\int_{\mathbb{R}^d} \sup_{|y|=|x|} \Big| \mathbb{E}\big[\tilde a(y) \tilde a(0)\big] \Big| \,dx \leq \varepsilon^d.
\end{align*}
\end{itemize}
For a depiction of one realization of such a coefficient field, see Figure~\ref{GaussianCoefficientField}. A proof of the logarithmic Sobolev inequality for such an ensemble is provided in \cite[Lemma 4]{GNO4}.

\begin{figure}
~
\includegraphics[scale=0.3]{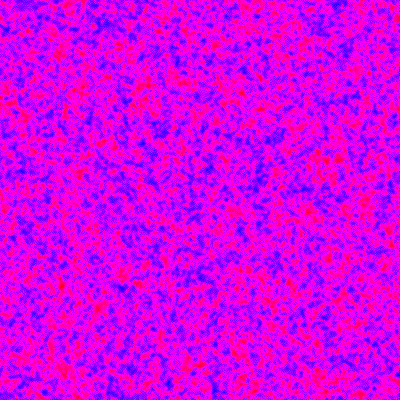}
\caption{A random coefficient field generated with the help of a Gaussian random field.\label{GaussianCoefficientField}}
\end{figure}

For symmetric coefficient fields, our main result is the following estimate on the homogenization error, measured in the $H^{-1}$ norm.
\begin{theorem}
\label{ErrorEstimateSymmetric}
Let $\langle\cdot \rangle$ be a bounded, uniformly elliptic, and stationary ensemble of coefficient fields on $\mathbb{R}^d$, $d\geq 3$; assume that $a$ is almost surely symmetric, i.\,e.\ satisfies $a^T(x)=a(x)$ for all $x\in \mathbb{R}^d$ almost surely. Suppose that $\langle\cdot \rangle$ satisfies a coarsened logarithmic Sobolev inequality with exponent $\beta\in [0,1-2/d)$ and correlation length $\varepsilon\leq \frac{1}{2}$ in the sense of Definition~\ref{Definition1}. Then there exists a nonnegative random constant $\mathcal{C}(a)$ satisfying the bound
\begin{align*}
\mathbb{E}[\exp(\mathcal{C}(a)^{1/C(d,\lambda,\beta)})] \leq 2
\end{align*}
for some constant $C(d,\lambda,\beta)$ such that for all $f\in L^2(\mathbb{R}^d)$ which are supported in $\{|x|\leq 1\}$ the following homogenization error estimate holds true:

The solution $u_{\shom}$ to the equation
\begin{align*}
-\nabla \cdot (a_{\shom}\nabla u_{\shom})=f
\end{align*}
approximates the solution $u$ to the equation
\begin{align*}
-\nabla \cdot (a\nabla u) =f
\end{align*}
in the sense
\begin{align*}
||u-u_{\shom}||_{H^{-1}(\{|x|\leq 1\})}
\leq
\begin{cases}
\mathcal{C}(a) ||\nabla^3 u_{\shom}||_{L^\infty} \varepsilon^{d(1-\beta)/2} &\text{for }d(1-\beta)/2<2,
\\
\mathcal{C}(a) ||\nabla^3 u_{\shom}||_{L^\infty} \varepsilon^2 |\log \varepsilon| &\text{for }d(1-\beta)/2=2,
\\
\mathcal{C}(a) ||\nabla^3 u_{\shom}||_{L^\infty} \varepsilon^2 &\text{for }d(1-\beta)/2>2.
\end{cases}
\end{align*}
For the second-order two-scale expansion, the error bound at the level of the gradient
\begin{align*}
\bigg|\bigg|\nabla u-\nabla &\Big(u_{\shom}+\sum_i \phi_i \partial_i u_{\shom} + \sum_{i,j} \psi_{ij} \partial_i \partial_j u_{\shom} \Big)\bigg|\bigg|_{L^2(\mathbb{R}^d)}
\\&
\leq
\begin{cases}
\mathcal{C}(a) ||\nabla^3 u_{\shom}||_{L^\infty} \varepsilon^{d(1-\beta)/2} &\text{for }d(1-\beta)/2<2,
\\
\mathcal{C}(a) ||\nabla^3 u_{\shom}||_{L^\infty} \varepsilon^2 \sqrt{|\log \varepsilon|} &\text{for }d(1-\beta)/2=2,
\\
\mathcal{C}(a) ||\nabla^3 u_{\shom}||_{L^\infty} \varepsilon^2 &\text{for }d(1-\beta)/2>2,
\end{cases}
\end{align*}
is satisfied.
\end{theorem}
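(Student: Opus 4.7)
The argument splits into a deterministic PDE step built around the second-order error identity \eqref{ErrorTwoScaleSecond} and a probabilistic step supplying sharp moment bounds on $\psi_{ij}$ and the vector potential $\Psi_{ij}$. Set
\[
v := u_{\shom}+\sum_{i}\phi_i\,\partial_i u_{\shom}+\sum_{i,j}\psi_{ij}\,\partial_i\partial_j u_{\shom},\qquad w:=u-v.
\]
Since $a$ is symmetric, $a_{\shom,ijk}^1$ in \eqref{Defa1ijk} vanishes, so the identity \eqref{ErrorTwoScaleSecond} applies; combined with $-\nabla\cdot(a\nabla u)=-\nabla\cdot(a_{\shom}\nabla u_{\shom})=f$ this gives
\[
-\nabla\cdot(a\nabla w)=\nabla\cdot\Big(\sum_{i,j}(\psi_{ij}a-\Psi_{ij})\nabla\partial_i\partial_j u_{\shom}\Big).
\]
Sublinear growth of $\phi_i,\psi_{ij},\Psi_{ij}$ together with the Green-function decay $|\nabla^k u_{\shom}(x)|\lesssim\|\nabla^k u_{\shom}\|_{L^\infty}(1+|x|)^{-d-k+2}$ for $k\geq 1$ (since $f$ is supported in $B_1$) ensures $w\in\dot{H}^1(\R^d)$, and testing against $w$ with uniform ellipticity yields
\[
\|\nabla w\|_{L^2(\R^d)}\lesssim\Big\|\sum_{i,j}(\psi_{ij}a-\Psi_{ij})\nabla\partial_i\partial_j u_{\shom}\Big\|_{L^2(\R^d)},
\]
which is the $L^2$-gradient bound of the theorem up to estimating the right-hand side.

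To estimate the right-hand side I would decompose $\R^d=B_2\cup\bigcup_{k\geq 1}A_k$ with $A_k:=B_{2^{k+1}}\setminus B_{2^k}$, so that $|\nabla^3 u_{\shom}|\lesssim\|\nabla^3 u_{\shom}\|_{L^\infty}(1+|x|)^{-d-1}$ makes the $A_k$-contribution proportional to $\|\nabla^3 u_{\shom}\|_{L^\infty}^2\,(2^k)^{-d-2}\int_{A_k}|\psi_{ij}a-\Psi_{ij}|^2\dx$. The key probabilistic input is a scale-dependent corrector bound of the shape
\[
\Big\langle\inf_{c\in\R}\fint_{B_R}\big|\psi_{ij}a-\Psi_{ij}-c\big|^2\dx\Big\rangle^{1/2}\lesssim\varepsilon^2\,\mu_{d,\beta}(R/\varepsilon),
\]
with $\mu_{d,\beta}(\rho)$ bounded for $d(1-\beta)/2>2$, logarithmic at the borderline, and polynomial $\rho^{2-d(1-\beta)/2}$ otherwise. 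This is obtained by feeding sensitivity estimates for $\psi_{ij}$ and $\Psi_{ij}$ into the coarsened LSI \eqref{LSI}, where the sensitivities are controlled by the large-scale $L^p$ theory \eqref{I1} applied to \eqref{Equationpsi} and \eqref{EquationPsi}; the upgrade to stretched-exponential stochastic moments proceeds as for the first-order corrector. Since $\nabla^3 u_{\shom}\in L^1(\R^d)$, the constant mode $c$ contributes a finite amount to the pairing, and summing the dyadic series produces exactly the three regimes of the statement, the logarithmic factor at the critical exponent arising from a borderline $\sum_k 1$ cut off at scale $\varepsilon^{-1}$.

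For the $H^{-1}$ bound on $u-u_{\shom}$ I decompose $u-u_{\shom}=w+(v-u_{\shom})$. The $w$-contribution is handled by $\|w\|_{H^{-1}(B_1)}\leq\|w\|_{L^2(B_1)}\lesssim\|\nabla w\|_{L^2(\R^d)}$, using the Sobolev embedding $\dot{H}^1(\R^d)\hookrightarrow L^{2^*}(\R^d)\hookrightarrow L^2(B_1)$. For the explicit corrector part $\sum_i\phi_i\partial_i u_{\shom}+\sum_{i,j}\psi_{ij}\partial_i\partial_j u_{\shom}$, the improvement from its $L^2$-size $\varepsilon$ to an $H^{-1}$-size $\varepsilon^{d(1-\beta)/2}$ must be extracted from the oscillatory structure encoded in \eqref{Equationpsi}: since the vector field $a\nabla\psi_{ij}+(\phi_i a-\sigma_i)e_j$ is divergence-free, its $ijk$-symmetrization is the divergence of $\Psi_{ij}$ via \eqref{EquationPsi} and the gauge \eqref{Gauge}. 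Testing this corrector combination against $\varphi\in H^1_0(B_1)$ and integrating by parts using this representation shifts one derivative off $u_{\shom}$ onto $\varphi$, reducing the $H^{-1}$ bound to the same weighted $L^2$ corrector norms of the previous paragraph multiplied by $\|\nabla\varphi\|_{L^2}$, up to boundary contributions on $\partial B_1$ controlled by trace estimates. \emph{The main obstacle} is precisely the annular moment bound above with stretched-exponential stochastic integrability: the PDE manipulations are essentially routine once it is in place, whereas its proof requires the delicate combination of \eqref{LSI}, sensitivity analysis for the second-order corrector, and the large-scale Calder\'on--Zygmund estimate \eqref{I1}, with the threshold $d(1-\beta)/2=2$ and the logarithmic correction at equality reflecting the precise balance between the algebraic decay of $\nabla^3 u_{\shom}$ and the polynomial growth of the annular $L^2$ norm of $\psi_{ij}a-\Psi_{ij}$.
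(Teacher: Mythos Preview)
Your architecture for the gradient estimate --- the error identity \eqref{ErrorTwoScaleSecond}, the energy estimate for $w$, the dyadic annular decomposition combined with $|\nabla^3 u_{\shom}|\lesssim(1+|x|)^{-d-1}$, and the identification of the annular $L^2$ bound on $(\psi,\Psi)$ (Theorem~\ref{SecondCorrectorEstimate}) as the crucial stochastic input --- matches the paper and would go through.

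The gap is in your $H^{-1}$ argument for the corrector part $\sum_i\phi_i\partial_i u_{\shom}+\sum_{ij}\psi_{ij}\partial_i\partial_j u_{\shom}$. Your proposed mechanism --- exploiting that $a\nabla\psi_{ij}+(\phi_i a-\sigma_i)e_j$ is divergence-free, together with \eqref{EquationPsi}, to integrate by parts against a test function $\varphi\in H^1_0(B_1)$ --- does not deliver what you need. Those identities involve $\phi_i a$ and $\sigma_i$, not $\phi_i$ by itself, and they do not furnish a representation of $\int_{B_1}\phi_i\,\partial_i u_{\shom}\,\varphi$ in terms of $\nabla\varphi$ and quantities of size $\varepsilon^{d(1-\beta)/2}$. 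Plain integration by parts in the $\partial_i u_{\shom}$ factor leaves a term $\int(\partial_i\phi_i)\,u_{\shom}\,\varphi$ that is no smaller.

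The paper handles the two corrector pieces separately and more directly. The $\psi$-term is simply estimated in $L^2(B_1)$: Theorem~\ref{SecondCorrectorEstimate}, rescaled, already gives $\|\psi\|_{L^2(B_1)}$ of order $\varepsilon^{d(1-\beta)/2}$, so the embedding $L^2\hookrightarrow H^{-1}$ suffices. The $\phi$-term is the one that genuinely requires an $H^{-1}$ gain, and this comes from a dedicated result, Lemma~\ref{EstimateCorrectorH-1}, asserting $\|\phi_i\|_{H^{-1}(B_1)}\lesssim\varepsilon^{d(1-\beta)/2}$. That lemma is proved not through any PDE identity but by feeding the known moment bounds for spatial averages $\int g\cdot\nabla\phi$ (from \cite{GNO4}) into Lemma~\ref{IntegrateLemma}, the same multiscale Haar-type decomposition that converts average bounds into norm bounds for $(\psi,\Psi)$. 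One then uses the product estimate $\|\phi_i\partial_i u_{\shom}\|_{H^{-1}(B_1)}\lesssim\|\phi_i\|_{H^{-1}(B_1)}\,\|u_{\shom}\|_{W^{2,\infty}(B_1)}$. This $H^{-1}$ bound on the first-order corrector is an independent ingredient of the same flavor as Theorem~\ref{SecondCorrectorEstimate}, and your plan is missing it.
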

For non-symmetric coefficient fields, in order to obtain a higher-order approximation of the solution to the problem with oscillating coefficients, one needs to add the solution of another macroscopic problem to the solution of the effective equation. With this additional correction, we obtain an analogous higher-order error bound in the nonsymmetric setting:
\begin{theorem}
\label{ErrorEstimateNonsymmetric}
Let $\langle\cdot \rangle$ be a bounded, uniformly elliptic, and stationary ensemble of coefficient fields on $\mathbb{R}^d$, $d\geq 3$. Suppose that $\langle\cdot \rangle$ satisfies a coarsened logarithmic Sobolev inequality with exponent $\beta\in [0,1-2/d)$ and correlation length $\varepsilon\leq \frac{1}{2}$ in the sense of Definition~\ref{Definition1}.
Define the coefficient $a_{\shom,ijk}^1$ as
\begin{align*}
a_{\shom,ijk}^1:=\frac{1}{\varepsilon} \mathbb{E}[e_k\cdot a\nabla \psi_{ij}+(\phi_i a_{kj}-\sigma_{ikj})].
\end{align*}
Then there exists a constant $C(d,\beta,\lambda)$ such that $|a_{\shom,ijk}^1|\leq C(d,\beta,\lambda)$ holds.
Furthermore, there exists a nonnegative random constant $\mathcal{C}(a)$ satisfying the bound
\begin{align*}
\mathbb{E}[\exp(\mathcal{C}(a)^{1/C(d,\lambda,\beta)})] \leq 2
\end{align*}
for some constant $C(d,\lambda,\beta)$ such that for all $f\in L^2(\mathbb{R}^d)$ which are supported in $\{|x|\leq 1\}$ the following homogenization error estimate holds true:

The solution $u_{\shom}$ to the equation
\begin{align*}
-\nabla \cdot (a_{\shom}\nabla u_{\shom})=f
\end{align*}
together with the solution $u_{\shom}^1$ of the macroscopic equation
\begin{align*}
-\nabla \cdot (a_{\shom} \nabla u_{\shom}^1) = \sum_{i,j,k} a_{\shom,ijk}^1 \partial_i \partial_j \partial_k u_{\shom}
\end{align*}
approximate the solution $u$ to the equation
\begin{align*}
-\nabla \cdot (a\nabla u) =f
\end{align*}
in the sense
\begin{align*}
||u-(u_{\shom}+\varepsilon u_{\shom}^1)&||_{H^{-1}(\{|x|\leq 1\})}
\\&
\leq
\begin{cases}
\mathcal{C}(a) (||\nabla^3 u_{\shom}||_{L^\infty}+||\nabla^2 u_{\shom}^1||_{L^\infty}) \varepsilon^{d(1-\beta)/2} &\text{for }d(1-\beta)/2<2,
\\
\mathcal{C}(a) (||\nabla^3 u_{\shom}||_{L^\infty}+||\nabla^2 u_{\shom}^1||_{L^\infty}) \varepsilon^2 |\log \varepsilon| &\text{for }d(1-\beta)/2=2,
\\
\mathcal{C}(a) (||\nabla^3 u_{\shom}||_{L^\infty}+||\nabla^2 u_{\shom}^1||_{L^\infty}) \varepsilon^2 &\text{for }d(1-\beta)/2>2.
\end{cases}
\end{align*}
For the second-order two-scale expansion, the error bound at the level of the gradient
\begin{align*}
\bigg|\bigg|\nabla u-\nabla \Big(u_{\shom}+&\varepsilon u_{\shom}^1+\sum_i \phi_i \partial_i (u_{\shom}+\varepsilon u_{\shom}^1) + \sum_{i,j} \psi_{ij} \partial_i \partial_j u_{\shom} \Big)\bigg|\bigg|_{L^2(\mathbb{R}^d)}
\\&
\leq
\begin{cases}
\mathcal{C}(a) (||\nabla^3 u_{\shom}||_{L^\infty}+||\nabla^2 u_{\shom}^1||_{L^\infty}) \varepsilon^{d(1-\beta)/2} &\text{for }d(1-\beta)/2<2,
\\
\mathcal{C}(a) (||\nabla^3 u_{\shom}||_{L^\infty}+||\nabla^2 u_{\shom}^1||_{L^\infty}) \varepsilon^2 \sqrt{|\log \varepsilon|} &\text{for }d(1-\beta)/2=2,
\\
\mathcal{C}(a) (||\nabla^3 u_{\shom}||_{L^\infty}+||\nabla^2 u_{\shom}^1||_{L^\infty}) \varepsilon^2 &\text{for }d(1-\beta)/2>2,
\end{cases}
\end{align*}
is satisfied.
\end{theorem}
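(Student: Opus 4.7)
My plan is to mirror the symmetric case (Theorem~\ref{ErrorEstimateSymmetric}), with the macroscopic correction $\varepsilon u_{\shom}^1$ playing the role of compensating for the nonvanishing symmetrized third-order mean $a_{\shom,ijk}^1$ that distinguishes the nonsymmetric setting. The starting point is the algebraic identity (to be derived in Section~\ref{DerivationFormulas}) for the modified second-order two-scale expansion
\begin{align*}
z := (u_{\shom}+\varepsilon u_{\shom}^1) + \sum_i \phi_i\,\partial_i(u_{\shom}+\varepsilon u_{\shom}^1) + \sum_{ij}\psi_{ij}\,\partial_i\partial_j u_{\shom},
\end{align*}
namely
\begin{align*}
-\nabla\cdot(a\nabla z) = -\nabla\cdot(a_{\shom}\nabla u_{\shom}) - \nabla\cdot R, \quad R := \varepsilon\sum_i(a\phi_i-\sigma_i)\nabla\partial_i u_{\shom}^1 + \sum_{ij}(\psi_{ij}a-\Psi_{ij})\nabla\partial_i\partial_j u_{\shom}.
\end{align*}
Choosing $u_{\shom}^1$ as the solution of \eqref{EffectiveEquationFirst} with coefficients \eqref{Defa1ijk} is exactly what forces the ``raw'' third-order obstruction $\varepsilon a_{\shom,ijk}^1\partial_i\partial_j\partial_k u_{\shom}$ to cancel. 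The boundedness $|a_{\shom,ijk}^1|\leq C(d,\beta,\lambda)$ I obtain by taking the expectation in \eqref{Defa1ijk} and applying Cauchy--Schwarz: the scalings $\mathbb{E}[|\phi_i|^2]^{1/2}, \mathbb{E}[|\sigma_{ikj}|^2]^{1/2}\lesssim \varepsilon$ (classical) and $\mathbb{E}[|\nabla\psi_{ij}|^2]^{1/2}\lesssim\varepsilon$ (from \eqref{Equationpsi} together with the large-scale $L^p$ estimate \eqref{I1}) cancel the prefactor $1/\varepsilon$.

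With the identity in place, the difference $w := u-z$ satisfies $-\nabla\cdot(a\nabla w)=\nabla\cdot R$, and the standard energy estimate yields $\|\nabla w\|_{L^2}\leq\lambda^{-1}\|R\|_{L^2}$. Bounding $\|R\|_{L^2}$ term by term produces the second (gradient-level) assertion of the theorem: the first-order correctors $\phi_i,\sigma_i$ enter in $L^2_{\mathrm{loc}}$ with classical moment bound of order $\varepsilon\mathcal{C}(a)$ paired against $\|\nabla^2 u_{\shom}^1\|_{L^\infty}$, while the second-order correctors $\psi_{ij},\Psi_{ij}$ enter with the new moment bound paired against $\|\nabla^3 u_{\shom}\|_{L^\infty}$; the non-compactness of $\nabla^2 u_{\shom}^1$ and $\nabla^3 u_{\shom}$ causes no issue, since away from $\{|x|\leq 2\}$ they decay like $|x|^{-d}$ and $|x|^{-d-1}$ respectively (by the constant-coefficient Green's function for $a_{\shom}$), which absorbs the sublinear growth of the correctors. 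For the $H^{-1}(\{|x|\leq 1\})$ estimate on $u-u_{\shom}-\varepsilon u_{\shom}^1$, I use the decomposition
\begin{align*}
u-u_{\shom}-\varepsilon u_{\shom}^1 = w + \sum_i\phi_i\,\partial_i(u_{\shom}+\varepsilon u_{\shom}^1) + \sum_{ij}\psi_{ij}\,\partial_i\partial_j u_{\shom},
\end{align*}
bound $w$ in $L^2(\{|x|\leq 1\})$ via Sobolev from $\|\nabla w\|_{L^2}$ (using $d\geq 3$), and pair each corrector summand against an $H^1_0$-test function $g$: a single integration by parts transfers one derivative from the smooth factor to $g$, supplying the extra $\varepsilon$ that upgrades the $L^2$ moment bound on $\phi_i$ (resp.\ on $\psi_{ij}-\fint_{\{|x|\leq 1\}}\psi_{ij}$) to the desired $H^{-1}$-order, while the spatial-average part $\fint_{\{|x|\leq 1\}}\psi_{ij}$ is handled analogously against the mean-zero $g$.

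The main obstacle will be the moment bound on the second-order corrector $\psi_{ij}$ (and its vector potential $\Psi_{ij}$) with the stretched-exponential stochastic integrability asserted for $\mathcal{C}(a)$. This is obtained via sensitivity (Malliavin-type) estimates for $\psi_{ij}$ fed into the logarithmic Sobolev inequality of Definition~\ref{Definition1}, combined with the large-scale Calder\'on--Zygmund theory \eqref{I1}; these are the core technical contributions of the paper and are established in dedicated sections. Once they are available, the plan above delivers both assertions, with the stretched-exponential integrability of $\mathcal{C}(a)$ propagating directly from that of the corrector moment bounds.
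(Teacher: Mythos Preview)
Your overall architecture matches the paper's: the algebraic identity for the modified two-scale expansion, the energy estimate for $w=u-z$, the dyadic decomposition against the decay of $\nabla^3 u_{\shom}$ and $\nabla^2 u_{\shom}^1$, and the bound on $a_{\shom,ijk}^1$ are all handled exactly as in the text. The gradient-level assertion goes through as you describe.

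The gap is in your $H^{-1}$ step, specifically the treatment of the term $\sum_i\phi_i\,\partial_i(u_{\shom}+\varepsilon u_{\shom}^1)$. Your claim that ``a single integration by parts transfers one derivative from the smooth factor to $g$, supplying the extra $\varepsilon$'' is not correct in the stochastic setting. Integration by parts would produce either $\nabla\phi_i$ (which has no better moments than $\phi_i$ itself) or would require an antiderivative of $\phi_i$ with good bounds, which exists in the periodic case but not here. The $L^2$ bound on $\phi_i$ is of order $\varepsilon$, whereas the target $H^{-1}$ order is $\varepsilon^{d(1-\beta)/2}$; the gain is a factor $\varepsilon^{d(1-\beta)/2-1}$ (e.g.\ $\varepsilon^{1/2}$ when $d=3,\beta=0$), and this comes from \emph{stochastic cancellations}, not from a deterministic integration by parts. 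The paper obtains it via Lemma~\ref{EstimateCorrectorH-1}, which bounds $\|\phi_i\|_{H^{-1}(\{|x|\leq r\})}$ using the moment estimates \eqref{BoundCorrectorAverages} on linear functionals $\int\nabla\phi\cdot g$ from \cite{GNO4}, fed through the multiscale decomposition of Lemma~\ref{IntegrateLemma}. One then uses the product estimate
\[
\|\phi_i\,\partial_i u_{\shom}\|_{H^{-1}(\{|x|\leq 1\})}\lesssim \|\phi_i\|_{H^{-1}(\{|x|\leq 1\})}\big(\|\nabla u_{\shom}\|_{L^\infty}+\|\nabla^2 u_{\shom}\|_{L^\infty}\big).
\]
For the $\psi_{ij}\partial_i\partial_j u_{\shom}$ term no such upgrade is needed: the $L^2$ bound on $\psi_{ij}-\fint\psi_{ij}$ from Theorem~\ref{SecondCorrectorEstimate} is already of order $\varepsilon^{d(1-\beta)/2}$, and $L^2\hookrightarrow H^{-1}$ suffices (your remark about the average and mean-zero test functions is correct but not the mechanism; the paper simply absorbs the average into the choice of $\psi$ via a telescoping argument over dyadic scales). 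So your plan is sound once you replace the integration-by-parts heuristic for $\phi_i$ by an appeal to Lemma~\ref{EstimateCorrectorH-1}.
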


Our homogenization error estimates are based on the following estimate on the second-order homogenization corrector $\psi$ and $\Psi$. Note that we formulate this theorem on the microscopic scale, i.\,e.\ we set $\varepsilon:=1$, as it is a result for the corrector on the full space $\mathbb{R}^d$ which otherwise would lack a natural scale.
\begin{theorem}
\label{SecondCorrectorEstimate}
Let $\langle\cdot \rangle$ be a bounded, uniformly elliptic, and stationary ensemble of coefficient fields on $\mathbb{R}^d$, $d\geq 3$. Suppose that $\langle\cdot \rangle$ satisfies a coarsened logarithmic Sobolev inequality with correlation length $\varepsilon:=1$ and exponent $\beta\in [0,1-2/d)$ in the sense of Definition~\ref{Definition1}. Then for any $r\geq 1$ there exists a nonnegative random number $\mathcal{C}_r(a)$ with
\begin{align*}
\mathbb{E}[\exp(\mathcal{C}_r(a)^{1/C(d,\lambda,\beta)})] \leq 2
\end{align*}
for some constant $C(d,\lambda,\beta)$ such that the second-order homogenization corrector $\psi$ defined by \eqref{Equationpsi} and the corresponding vector potential for the flux correction $\Psi$ defined by \eqref{EquationPsi} with the choice of gauge \eqref{Gauge} satisfy an estimate of the form
\begin{align*}
\left(\fint_{\{|x|\leq r\}} \left|\psi-\fint_{\{|x|\leq r\}} \psi\right|^2+\left|\Psi-\fint_{\{|x|\leq r\}}\Psi\right|^2 \,dx\right)^{1/2}
\leq
\begin{cases}
\mathcal{C}_r(a) r^{2-d(1-\beta)/2} & \text{for }d(1-\beta)/2 < 2,
\\
\mathcal{C}_r(a) \sqrt{\log (r+1)} & \text{for }d(1-\beta)/2 = 2,
\\
\mathcal{C}_r(a) &\text{for }d(1-\beta)/2 > 2.
\end{cases}
\end{align*}
\end{theorem}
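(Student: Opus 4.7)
The plan is to prove the bound from three ingredients invoked from earlier in the paper: the large-scale Calderón-Zygmund-type $L^p$ theory \eqref{I1}; Fréchet sensitivity estimates of $\psi$ and $\Psi$ with respect to the coefficient field $a$, obtained by differentiating their defining equations; and the coarsened logarithmic Sobolev inequality \eqref{LSI}, which will convert sensitivity bounds into concentration for functionals of $a$.

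First I would apply the large-scale $L^p$ theory \eqref{I1} to \eqref{Equationpsi}: the source $(\phi_i a - \sigma_i)e_j$ is stationary with moments of every order by the optimal first-order estimates of \cite{GloriaOttoNearOptimal,ArmstrongKuusiMourrat2016}, so \eqref{I1} promotes this to the fact that $\nabla \psi_{ij}$ is a stationary random field with moments of every order (existence of such a $\psi$ being the content of \cite{BellaGiuntiOtto}). The gauge \eqref{Gauge} exhibits $\Psi$ as the solution of a Poisson-type equation whose source is a mean-zero stationary field -- the subtraction of $\varepsilon a_{\shom,ijk}^1$ in \eqref{EquationPsi} is precisely what enforces the vanishing of the mean -- and classical Calderón-Zygmund theory then delivers the analogous moment bound on $\nabla \Psi$. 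As a byproduct this step also produces the a priori bound $|a_{\shom,ijk}^1|\leq C$.

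Next, to control $\fint_{\{|x|\leq r\}}|\psi-\fint_{\{|x|\leq r\}}\psi|^2$, I would test $\psi - \fint\psi$ against arbitrary $g\in L^2$ supported in $\{|x|\leq r\}$ with $\|g\|_{L^2}\leq 1$ and set $\xi(a) := \int g\,\psi_{ij}\,dx$. Differentiating \eqref{Equationpsi} in $a$ and introducing the adjoint solution $v_g$ of $-\nabla\cdot(a^T\nabla v_g)=g$ yields schematically
\begin{align*}
\Big|\frac{\partial \xi}{\partial a}(y)\Big| \lesssim |\nabla v_g(y)|\bigl(|\nabla \psi(y)|+|\phi(y)|+|\sigma(y)|\bigr) + \text{iterated sensitivities of }\phi,\sigma,
\end{align*}
the iterated contributions being handled by the analogous representation for the first-order correctors, whose sensitivities were controlled in \cite{GloriaOttoNearOptimal,GNO4}. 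Annealed large-scale regularity for the adjoint problem gives the spatial decay of $|\nabla v_g|$ outside $\{|x|\leq r\}$, and plugging this together with the stationary moment bounds from Step~1 into $\mathbb{E}[\|\partial\xi/\partial a\|_{CC}^2]$ -- the sum over partition cells $D_i$, of diameter $\sim (\operatorname{dist}(D_i,0)+1)^\beta$, of squared $L^1(D_i)$-norms of $\partial\xi/\partial a$ -- leads to a geometric sum whose dimension count produces exactly the three regimes in the statement. Applying the LSI \eqref{LSI} and then taking the supremum over $g$ delivers the claimed $L^2$-average bound on $\psi-\fint\psi$. A nonlinear concentration argument as in \cite{GloriaOttoNearOptimal}, applying the LSI to $e^{t\xi}$ and iterating, finally upgrades the resulting variance bound to the stretched-exponential moment $\mathbb{E}[\exp(\mathcal{C}_r(a)^{1/C})]\leq 2$. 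The case of $\Psi$ is handled identically, starting from \eqref{Gauge} in place of \eqref{Equationpsi}.

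The hard part will be carrying out the sensitivity analysis so that the sharp scaling $r^{2-d(1-\beta)/2}$ emerges, rather than the $r$ one would obtain from a naive Poincaré inequality on the stationary field $\nabla\psi$. This requires carefully exploiting the decay of the adjoint Green-function gradient $\nabla v_g$ against the growth of the partition cells -- this is where the genuine stochastic cancellation in $\fint_{\{|x|\leq r\}}\psi$ is captured, and where the transition at $d(1-\beta)/2=2$ is generated. A secondary subtlety will be the iterative control of the sensitivities of $\phi$ and $\sigma$; this should be manageable by invoking the optimal first-order estimates already available in the literature.
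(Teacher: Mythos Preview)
Your strategy matches the paper's: moment bounds on $\nabla\psi,\nabla\Psi$ via the large-scale $L^p$ theory (this is Proposition~\ref{proppsimoment}), sensitivity analysis via an adjoint representation (Proposition~\ref{prop1}), and the LSI to convert sensitivity into moment bounds on linear functionals (Proposition~\ref{MomentBoundsCorrectorAverages}). The gap is in the final passage, where you apply the LSI to $\xi_g:=\int g\,\psi$ for each fixed test function and then ``take the supremum over $g$''.

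This step does not work as stated, for two reasons. First, the LSI controls $\xi_g-\mathbb{E}[\xi_g]$, and since only $\nabla\psi$ is stationary---$\psi$ itself need not be when $d(1-\beta)/2\leq 2$---the quantity $\mathbb{E}[\int g\,\psi]$ is not a priori well-defined, and in any case bears no relation to the random spatial average $\fint_{B_r}\psi$ that you actually want to subtract. Second, and more seriously, a moment bound on $\xi_g$ for each fixed $g$ does not yield a moment bound on $\sup_g\xi_g$: the supremum over an infinite-dimensional family of test functions cannot be pulled inside the expectation for free. The paper resolves both issues by working with functionals $F\nabla\psi=\int g\cdot\nabla\psi$ of the stationary zero-mean field $\nabla\psi$, and then invoking a deterministic dyadic decomposition (Lemma~\ref{IntegrateLemma}) that expresses $\|\psi-\fint\psi\|_{L^2(B_r)}$ as an $\ell^2$-sum over countably many \emph{specific} gradient-average functionals (differences of averages over nested dyadic cubes). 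The triangle inequality in $L^m(\Omega)$ then replaces your supremum, and the three regimes in the statement emerge from the summability of the resulting dyadic series with weights $(2^n)^{1-\gamma}$, $\gamma=(1-\beta)d/2-1$.

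Two secondary remarks. Your adjoint $v_g$ solves $-\nabla\cdot(a^T\nabla v_g)=g$ rather than $=\nabla\cdot g$, so $\nabla v_g$ decays one power of $|x|$ slower than the paper's $\tilde v_0$ and picks up an extra factor $r+r_*$ in the weighted Meyers estimate (compare \eqref{e1.13++} with \eqref{e1.13}); this is consistent with $\int g\,\psi$ carrying one fewer derivative than $\int g\cdot\nabla\psi$, but it means your dimension count would have to be redone. And the decay of the adjoint that feeds into $\|\partial\xi/\partial a\|_{CC}$ must be \emph{quenched} (Lemma~\ref{WeightedMeyers}, random only through $r_*$), not annealed as you write; an annealed bound on $\nabla v_g$ would not let you control the random variable $\|\partial\xi/\partial a\|_{CC}$ before taking its moments.
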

We would like to remark that in the case $d(1-\beta)/2>2$, the second-order corrector $\psi$ and the corresponding vector potential $\Psi$ exist as stationary random fields; for a proof of this fact for $\psi$ in the case $\beta=0$, see e.\,g.\ \cite{YuGu}.

Note that the bound on the second-order provided by Theorem~\ref{SecondCorrectorEstimate} is precisely the input required by \cite{BellaGiuntiOtto} to obtain a localized estimate on the homogenization error -- that is, an estimate down to a (random) scale $r_\ast$ (the typical value of $r_\ast$ being of the order of $\varepsilon=1$). Note that for the result in \cite{BellaGiuntiOtto} one needs an estimate on the second-order corrector with a random constant that is uniform in $r$, an estimate that one may obtain from Theorem~\ref{SecondCorrectorEstimate} by giving up a power of $\log \log r$.

We would like to emphasize that just like in \cite{GNO4}, all of our results are also valid in the case of elliptic systems. In the systems' case, one just needs to add the required additional indices to the coefficients, correctors, and solutions both in the statement of the results and in the proofs.

\section{Strategy of the Proof}

As mentioned in the introduction, the key ingredient of the present work is the estimate on the second-order corrector established in Theorem \ref{SecondCorrectorEstimate}. We plan to partly mimic the approach of Gloria, Neukamm, and the fourth author concerning the first-order corrector $(\phi,\sigma)$ in \cite{GNO4}, first deriving sensitivity estimates on ``averages'' of $(\nabla \psi, \nabla \Psi)$ and then using them in the logarithmic Sobolev inequality. However, in contrast to the approach for the first-order corrector in \cite{GNO4}, in the case of the second-order corrector an interesting simplification is possible due to the fact that we already have access to large-scale regularity results for the random operator.

The logarithmic Sobolev inequality (abbreviated: LSI) \eqref{LSI}, by which we quantify the ergodicity of our ensemble of coefficient fields $a$, converts sensitivity estimates for a zero-mean random variable (that is, estimates on the Fr\'echet derivative of a zero-mean random variable $\xi=\xi(a)$) into estimates on the stochastic moments of the random variable. Our intention is to exploit the LSI to convert sensitivity estimates for the second-order corrector $\psi$ and the corresponding vector potential $\Psi$ into estimates on $\psi$ and $\Psi$ themselves. To this aim, we shall require an $L^p$ version of the LSI, which (as shown in \cite{GNO4}) is actually a consequence of the LSI in its original form \eqref{LSI}.
\begin{lemma}[Lemma 6 in \cite{GNO4}]
\label{LpLSI}
The assumption \eqref{LSI} entails for all $q\geq 1$ the estimate
\begin{align*}
\mathbb{E}\big[\big|\xi-\mathbb{E}[\xi]\big|^{2q}\big]^{1/q}
\leq Cq \mathbb{E}\bigg[\bigg|\bigg|\frac{\partial \xi}{\partial a}\bigg|\bigg|_{CC}^{2q}\bigg]^{1/q}
\end{align*}
for a generic (universal) constant $C$.
\end{lemma}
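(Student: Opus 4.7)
The plan is to follow the Aida--Stroock moment method, which converts the logarithmic Sobolev inequality \eqref{LSI} into $L^p$ moment bounds by testing \eqref{LSI} with a power of the centered random variable and integrating the resulting differential inequality in the integrability exponent. Set $f := \xi - \mathbb{E}[\xi]$, fix $p = 2q \geq 2$, and substitute $\eta := |f|^{p/2}$ into \eqref{LSI}. Since $f$ depends on the coefficient field $a$ but not on the spatial variable $x$, the chain rule for the Fr\'echet derivative gives $\big|\partial |f|^{p/2}/\partial a\big|(x) = (p/2)\,|f|^{p/2-1}\,|\partial f/\partial a|(x)$ pointwise in $x$, and pulling the $x$-independent prefactor $|f|^{p/2-1}$ outside the cell integrals defining the $CC$-norm yields
\begin{align*}
\left\|\frac{\partial |f|^{p/2}}{\partial a}\right\|_{CC}^2 = \Big(\frac{p}{2}\Big)^2 |f|^{p-2}\, \left\|\frac{\partial f}{\partial a}\right\|_{CC}^2.
\end{align*}

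Inserting this into \eqref{LSI} and applying H\"older's inequality with exponents $p/(p-2)$ and $p/2$ on the right-hand side produces the entropy bound
\begin{align*}
\operatorname{Ent}(|f|^p) \leq \frac{p^2}{4}\, \mathbb{E}[|f|^p]^{(p-2)/p}\, \mathbb{E}\left[\left\|\frac{\partial f}{\partial a}\right\|_{CC}^p\right]^{2/p}.
\end{align*}
Writing $G(p) := \mathbb{E}[|f|^p]^{1/p}$ and $A(p) := \mathbb{E}[\|\partial f/\partial a\|_{CC}^p]^{2/p}$, the Herbst linearization identity $\operatorname{Ent}(|f|^p)/\mathbb{E}[|f|^p] = p^2\, \frac{d}{dp}\log G(p)$ rearranges the previous inequality into the differential inequality $\frac{d}{dp} G(p)^2 \leq \frac{1}{2} A(p)$. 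Integrating from $p=2$ to $p=2q$ and using monotonicity of $L^p$-norms (so that $A$ is nondecreasing) gives $G(2q)^2 \leq G(2)^2 + (q-1)\,A(2q)$. The base case is closed by the Poincar\'e-type inequality $G(2)^2 = \operatorname{Var}(\xi) \leq A(2) \leq A(2q)$, which itself follows from \eqref{LSI} by linearizing around $\xi = 1 + t\tilde\xi$ and matching Taylor coefficients as $t \to 0$. Combining these estimates yields $G(2q)^2 \leq Cq\, A(2q)$, which is precisely the claimed bound (using that $\partial f/\partial a = \partial \xi/\partial a$ since $\mathbb{E}[\xi]$ is deterministic).

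The only nontrivial obstacle is the bookkeeping around the chain rule: one must check that it survives the $L^1$-over-cells/squared-sum structure of the $CC$-norm, which works because the scalar factor $|f|^{p/2-1}$ is independent of the spatial variable $x$ and can be pulled out of every cell integral, and one should regularize $|f|^{p/2}$ --- for instance by $(f^2 + \delta)^{p/4}$ with $\delta \downarrow 0$ --- to handle the set $\{f=0\}$ for exponents $p < 4$. Everything else reduces to the standard Herbst manipulation of the entropy as a derivative of $G(p)^2$.
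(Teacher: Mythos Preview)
Your argument is correct and is exactly the standard Aida--Stroock / Herbst derivation of polynomial moment bounds from a logarithmic Sobolev inequality. The chain-rule step is handled properly: since $f=\xi-\mathbb{E}[\xi]$ is a random variable and not a spatial function, the factor $|f|^{p/2-1}$ is constant in $x$ and factors out of every cell integral in the carr\'e-du-champ, so $\|\partial |f|^{p/2}/\partial a\|_{CC}^2=(p/2)^2|f|^{p-2}\|\partial f/\partial a\|_{CC}^2$ holds as stated. The entropy--derivative identity $\operatorname{Ent}(|f|^p)/\mathbb{E}[|f|^p]=p^2\,\frac{d}{dp}\log G(p)$, the resulting differential inequality $\frac{d}{dp}G(p)^2\le\frac12 A(p)$, and the Poincar\'e base case (obtained by linearizing \eqref{LSI}) are all standard and correctly applied. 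The regularization remark for $p<4$ is a legitimate technical point and the fix you propose is the usual one.

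As for comparison with the paper: the present paper does not give a proof of this lemma at all --- it is stated as Lemma~6 of \cite{GNO4} and simply quoted. The argument in \cite{GNO4} is essentially the one you wrote down, so your proof is not a different route but rather a faithful reconstruction of the cited proof.
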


As the LSI only provides estimates on stochastic moments for random variables with zero mean, we shall not estimate norms of the second-order corrector $(\psi,\Psi)$ directly, but rather start by estimating appropriate integral averages of the gradient of the second-order corrector. To this aim, we start with a deterministic estimate on the sensitivity of the gradient of the second-order corrector $\nabla \psi$ and the gradient of the corresponding vector potential $\nabla \Psi$ with respect to changes in the coefficient field $a$. This estimate is an analogue of the corresponding estimate \cite[Proposition 3]{GNO4} for the first-order corrector $\phi$ and the vector potential $\sigma$.

\begin{proposition}\label{prop1}
Let $\langle \cdot \rangle$ be a stationary, bounded, and uniformly elliptic ensemble of coefficient fields on $\mathbb{R}^d$, $d\geq 3$. Suppose that $\langle\cdot\rangle$ satisfies a coarsened logarithmic Sobolev inequality with exponent $\beta\in [0,1-\frac{2}{d})$ and correlation length $1$.

Consider the minimal radius above which a mean-value property (or, more precisely, a $C^{0,1}$ regularity theory) holds for $a$-harmonic functions and $a^*$-harmonic functions, that is, the random variable $r_*\ge 1$ characterized by 
\begin{equation}\label{g.6}
r_*:=\max_{a,a^*} \inf\left\{\;r=2^m \ge 1 \;\Big|\;\forall\;\rho=2^M\ge r:\quad
\frac{1}{r^2}\fint_{\{|x|\leq r\}}\left|(\phi,\sigma)-\fint_{\{|x|\leq r\}} (\phi,\sigma)\right|^2\le\frac{1}{C(d,\lambda)}\right\}
\end{equation}
with the understanding that $r_*=+\infty$ if the set is empty and where $\max_{a,a^*}$ denotes the maximum of the quantity for the coefficient fields $a$ and $a^*$ (with $a^*$ denoting the pointwise transpose of $a$).
%

Consider a linear functional $h\mapsto Fh$ on vector fields $\mathbb{R}^d\ni x\mapsto h(x)$ 
given by
\begin{equation*}
Fh\,=\,\int h\cdot g,
\end{equation*}
where  $g$ is supported in $\{|x|\leq r\}$ for some radius $r\ge 1$.
\smallskip

Then for a Meyers exponent $\bar p>1$ (depending only on $\lambda$ and $d$), any $p$ with $1<p< \bar p$, and any $\alpha$ with $2p \le \alpha < d(2p-1)$, we have for any $g$ with
\begin{equation*}
  \Big(\fint_{\{|x|\leq r\}} |g|^{2p}\Big)^\frac{1}{2p}\,\leq \, r^{-d}
\end{equation*}
the estimate
\begin{align}
\label{s.S2b}
\Big|\Big|\frac{\partial F\nabla(\psi,\Psi)}{\partial a}\Big|\Big|_{CC}^2
\lesssim &  \left(\int|g|^{2p}\right)^{\frac{1}{p}}
\times \biggl(\sum_D (r_* + r)^{\frac{2p}{p-1}} |D| (\min_D\omega_{\alpha-2p})^{-\frac{1}{p-1}} \biggl( \int_D|\nabla\phi+e|^2 \biggr)^{\frac{p}{p-1}}\biggr)^{\frac{p-1}{p}}
\\&
\nonumber
+\left(\int|g|^{2p}\right)^{\frac{1}{p}}\times
      \biggl(\sum_D|D| (\min_D\omega_{\alpha})^{-\frac{1}{p-1}} \biggl( \int_D|\nabla\psi|^2 + |\phi|^2 \biggr)^{\frac{p}{p-1}}\biggr)^{\frac{p-1}{p}},
\end{align}
where by $F\nabla(\psi,\Psi)$ we denote any functional of the form $\int \nabla \psi_{ij}\cdot g$ or $\int\nabla \Psi_{ijkl}\cdot g$ and where the weight functions are defined by
\begin{equation}
\label{DefWeights}
 \omega_\gamma(x) := \left(\frac{|x|}{r_*+r}+1\right)^\gamma. 
\end{equation}
Here, by $\lesssim$ we mean up to a constant factor depending only on $d$, $\lambda$, $p$, and $\alpha$.
%
%
\end{proposition}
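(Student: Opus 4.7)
The plan is to adapt the duality-and-cascade sensitivity argument developed for the first-order corrector in \cite[Proposition~3]{GNO4} to the second-order corrector $(\psi,\Psi)$, which introduces one additional layer of chain dependency. First, I introduce a dual function $v$ solving $-\nabla\cdot(a^\ast\nabla v)=-\nabla\cdot g$ on $\mathbb{R}^d$. Testing this equation with $\psi_{ij}$ and using the defining PDE \eqref{Equationpsi} yields
\begin{equation*}
F\nabla\psi_{ij}\,=\,-\int\nabla v\cdot(\phi_i a-\sigma_i)e_j,
\end{equation*}
and a similar representation for $F\nabla\Psi_{ijkl}$ follows from the gauge \eqref{Gauge} upon testing with the Laplace potential of $g$. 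Differentiating in $a$ produces three types of contributions: the explicit $a$-variation in $a\nabla\psi_{ij}$ and in $q^1$, the induced variation $(\delta\phi,\delta\sigma)$, and (for $\Psi$) the induced variation $\delta\psi$. Since $(\delta\phi,\delta\sigma)$ and $\delta\psi$ depend on $\delta a$ nonlocally through their own defining PDEs \eqref{EquationPhi}, \eqref{EquationSigma}, and \eqref{Equationpsi}, I would invert them by introducing further auxiliary dual problems, all driven by (various linear combinations of) $v$, thereby reducing the Fr\'echet derivative to a pointwise kernel $K(x)$ contracted against $\delta a(x)$.

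The kernel decomposes naturally as $K=K_1(\nabla\phi+e)+K_2(\nabla\psi,\phi)$, in which $K_1$ collects all contributions that passed through the first-order-corrector inversion (which is why they appear multiplied by $\nabla\phi+e$) and $K_2$ collects direct contributions from the variation of \eqref{Equationpsi} or of $q^1$. To bound the two kernels I would combine two regularity inputs. From the definition \eqref{g.6} of the minimal radius $r_\ast$, a $C^{0,1}$ mean-value property holds for $a$- and $a^\ast$-harmonic functions on scales above $r_\ast$, so the dual function $v$ (whose source $g$ is supported in $\{|x|\leq r\}$) has polynomial decay of $\nabla v$ at scales $|x|\gtrsim r_\ast+r$. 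By Meyers' self-improvement this upgrades, for $p<\bar p$, to a weighted $L^{2p}$ estimate on each cell $D$ of the partition with weight $\omega_\alpha^{-1/2}$; the constraint $2p\leq\alpha<d(2p-1)$ in the statement is precisely the range of weights compatible with the Green's-function tail at infinity (upper bound on $\alpha$) and with the behaviour near the origin (lower bound on $\alpha$). The auxiliary dual field that undoes the first-order-corrector variation satisfies an analogous weighted estimate, but with weight $\omega_{\alpha-2p}^{-1/2}$ and an additional prefactor of order $(r_\ast+r)^{p/(p-1)}$; this prefactor, squared, is precisely the $(r_\ast+r)^{2p/(p-1)}$ inside the first sum of \eqref{s.S2b}.

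Finally, I close the estimate cell by cell. On each $D$, Cauchy--Schwarz gives $(\int_D K_i|\textrm{corr}_i|)^2\leq(\int_D K_i^2)(\int_D|\textrm{corr}_i|^2)$, and a weighted H\"older between $\ell^p$ and $\ell^{p/(p-1)}$ over the cells, with weights chosen to absorb the dual-field bounds established above, produces the two sums in \eqref{s.S2b}; the outside prefactor $(\int|g|^{2p})^{1/p}$ emerges from summing the local dual-field $L^{2p}$ bounds back into a single global $L^{2p}$ norm of $g$. The main obstacle will be the bookkeeping in the cascade of dual problems: one must arrange them so that every nonlocal variation $\delta\phi,\delta\sigma,\delta\psi$ becomes a local pointwise expression, while the induced weights and $(r_\ast+r)$-prefactors match $\omega_\alpha$, $\omega_{\alpha-2p}$, and $(r_\ast+r)^{2p/(p-1)}$ exactly. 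The extra layer of dependency compared with the first-order case of \cite{GNO4} (since $\psi,\Psi$ depend on $\phi,\sigma$, which themselves depend on $a$) is what makes this bookkeeping the chief technical difficulty.
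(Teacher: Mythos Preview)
Your proposal is essentially correct and follows the same strategy as the paper: a cascade of dual problems to localize $\delta F$ to a pointwise kernel against $\delta a$, then the weighted Meyers estimate (the paper's Lemma~\ref{WeightedMeyers}) for the dual fields, and finally a cell-wise Cauchy--Schwarz followed by H\"older in $\ell^p$/$\ell^{p/(p-1)}$. Two small points of presentation differ from the paper: first, the paper does not represent $F\nabla\psi_{ij}$ and then differentiate, but rather starts directly from $\delta F=\int g\cdot\nabla\delta\psi_{ij}$ and uses the linearized equation \eqref{e1.1c} for $\delta\psi_{ij}$ together with the dual $\tilde v_0$, which avoids having to track a $\delta v$ contribution; second, the $(r_\ast+r)$ factor arises in the paper as the prefactor in the weighted Meyers estimate \eqref{e1.13++} for the \emph{non-divergence} right-hand sides of the auxiliary duals $\tilde v_1,\tilde v_2,\tilde v_3$, giving $(r_\ast+r)^2$ at the level of $\|\cdot\|_{CC}^2$ which becomes $(r_\ast+r)^{2p/(p-1)}$ only after being pulled inside the outer $(\,\cdot\,)^{(p-1)/p}$, rather than appearing as $(r_\ast+r)^{p/(p-1)}$ on the auxiliary field itself.
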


The preceding estimate on the sensitivity of the second-order corrector is based on the following weighted Meyers estimate for the operator $-\nabla \cdot a^*\nabla$ (that is, the formal adjoint operator to $-\nabla \cdot a \nabla$; here, $a^*$ denotes the pointwise transpose of $a$).

\begin{lemma}
\label{WeightedMeyers}
Let $a$ be a $\lambda$-uniformly elliptic coefficient field on $\mathbb{R}^d$. Let $r>0$ be arbitrary and let $r_*$ be as in \eqref{g.6}.
Let $v_0\in H^1_{loc}(\R^d)$ and $g_0 \in L^2_{loc}(\R^d;\R^d)$ be decaying functions related through
\begin{equation}\label{e1.17a}
-\nabla \cdot (a^* \nabla v) = \nabla \cdot g
\end{equation}
and let $\hat v\in H^1_{loc}(\R^d)$ and $\hat g\in L^2_{loc}(\R^d)$ be decaying functions related through
\begin{equation}\label{e1.17b}
-\nabla \cdot (a^* \nabla \hat v) = \hat g.
\end{equation}
There exists a Meyers exponent $\bar p>1$, which only depends on $d$ and $\lambda$, such that for all $1\leq p<\bar p$, all $\alpha_1 < d(2p-1)$, and all $0 < \alpha_0 < \alpha_1$ we have
\begin{equation}\label{e1.13}
    \left(\int |\nabla v|^{2p}\omega_{\alpha_0} \right)^{\frac{1}{2p}}\lesssim \left(\int |g|^{2p}\omega_{\alpha_1}\right)^{\frac{1}{2p}}
\end{equation}
and for all $1\leq p<\bar p$, all $\alpha_1 < d(2p-1)$, and all $0<\alpha_0<\alpha_1-2p$ we have
\begin{equation}\label{e1.13++}
    \left(\int |\nabla \hat v|^{2p}\omega_{\alpha_0} \right)^{\frac{1}{2p}}
    \lesssim 
    (r_* + r) \left(\int |\hat g|^{2p}\omega_{\alpha_1}\right)^{\frac{1}{2p}},
\end{equation}
where the weights $\omega_{\alpha_i}$ are defined by \eqref{DefWeights} and where
$\lesssim$ means less or equal up to a constant factor that only depends on $d,\lambda, p, \alpha_0$, and $\alpha_1$.
\end{lemma}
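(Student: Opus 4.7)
The strategy is to combine the classical unweighted Meyers estimate with the large-scale $C^{0,1}$ regularity of $a^*$-harmonic functions available above the scale $r_*$, which together control the mixed derivative of the Green's function of $-\nabla\cdot(a^*\nabla)$. The first estimate \eqref{e1.13} will follow from a dyadic decomposition adapted to the power weight $\omega_\gamma$, and the second estimate \eqref{e1.13++} will then be reduced to the first via a Newton-potential representation of the right-hand side.

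For \eqref{e1.13}, I would start from the classical unweighted Meyers bound: there exists a Meyers exponent $\bar p > 1$ depending only on $d$ and $\lambda$ such that for every $1 \le p < \bar p$ and every decaying solution of $-\nabla\cdot(a^*\nabla v) = \nabla\cdot g$,
\begin{equation*}
\int |\nabla v|^{2p}\,dx \lesssim \int |g|^{2p}\,dx.
\end{equation*}
This is purely deterministic (Caccioppoli's inequality combined with Gehring's self-improvement), and independent of $r_*$. To pass to the weighted version, I decompose $\mathbb{R}^d = \bigcup_{k\ge 0} A_k$ into dyadic annuli centred at the origin of thickness $\sim 2^k(r_*+r)$, on which $\omega_\gamma \sim 2^{k\gamma}$, and split $g = \sum_k g_k$ with $g_k := g\mathbf{1}_{A_k}$. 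Writing $v = \sum_k v_k$, where $v_k$ is the decaying solution driven by $\nabla\cdot g_k$, the diagonal contribution $\int_{A_k}|\nabla v_k|^{2p}$ is controlled directly by the unweighted Meyers estimate applied to $v_k$. On an off-diagonal annulus $A_j$ with $j\ne k$, the function $v_k$ is $a^*$-harmonic and the distance to $A_k$ is comparable to $2^{\max(j,k)}(r_*+r)\gtrsim r_*$, so the large-scale $C^{0,1}$ regularity encoded by the definition \eqref{g.6} of $r_*$ yields the pointwise decay
\begin{equation*}
\sup_{A_j}|\nabla v_k| \lesssim \bigl(2^{\max(j,k)}(r_*+r)\bigr)^{-d}\|g_k\|_{L^1(A_k)}.
\end{equation*}
Combining the diagonal and off-diagonal contributions via Minkowski's inequality reduces the weighted estimate to a discrete convolution in $k$ against a geometric weight, whose summability is precisely equivalent to the Muckenhoupt $A_{2p}$ condition $\alpha_1 < d(2p-1)$ on the power weight $\omega_{\alpha_1}$; the slack $\alpha_0 < \alpha_1$ absorbs the constant loss in passing from $\omega_{\alpha_1}$ to $\omega_{\alpha_0}$ in the summation.

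To derive \eqref{e1.13++}, I let $\eta$ be the decaying Newton potential of $\hat g$, i.e.\ the solution of $-\Delta\eta = \hat g$ on $\mathbb{R}^d$. Then $\hat v$ satisfies $-\nabla\cdot(a^*\nabla\hat v) = -\nabla\cdot\nabla\eta$, so for any intermediate exponent $\alpha'\in(\alpha_0,\alpha_1-2p)$ the first estimate \eqref{e1.13} applied with data $-\nabla\eta$ gives
\begin{equation*}
\Bigl(\int|\nabla\hat v|^{2p}\omega_{\alpha_0}\,dx\Bigr)^{1/(2p)} \lesssim \Bigl(\int|\nabla\eta|^{2p}\omega_{\alpha'}\,dx\Bigr)^{1/(2p)}.
\end{equation*}
Since $\nabla\eta$ is the Riesz potential of order one of $\hat g$, the classical weighted Riesz-potential estimate (valid under the Muckenhoupt condition on the power weights), with length scale $r_*+r$ inherited from $\omega_\gamma$, gives
\begin{equation*}
\Bigl(\int|\nabla\eta|^{2p}\omega_{\alpha'}\,dx\Bigr)^{1/(2p)} \lesssim (r_*+r)\Bigl(\int|\hat g|^{2p}\omega_{\alpha'+2p}\,dx\Bigr)^{1/(2p)} \le (r_*+r)\Bigl(\int|\hat g|^{2p}\omega_{\alpha_1}\,dx\Bigr)^{1/(2p)},
\end{equation*}
using $\alpha'+2p<\alpha_1$ and the monotonicity of $\gamma \mapsto \omega_\gamma$; the factor $(r_*+r)$ is dictated by dimensional analysis, the Riesz kernel being homogeneous of degree $-(d-1)$ and the intrinsic length scale of the weight being $r_*+r$. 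The main obstacle throughout is the clean tracking of the $r_*$-dependence in the off-diagonal Green's function bound of the second paragraph: this is exactly where the large-scale $C^{0,1}$ regularity encoded in the definition of $r_*$ is essential, whereas below the scale $r_*$ one falls back on $L^2$-averaged estimates built into Gehring's lemma, which do not deteriorate the Meyers exponent $\bar p$.
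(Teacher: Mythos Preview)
Your overall architecture---dyadic annuli at scale $r_*+r$, off-diagonal decay for the pieces, and a Newton-potential reduction for \eqref{e1.13++}---is the same as the paper's. The reduction of \eqref{e1.13++} to \eqref{e1.13} via $-\Delta\eta=\hat g$ and $-\nabla\cdot(a^*\nabla\hat v)=\nabla\cdot(-\nabla\eta)$ is exactly what the paper does (it writes the Riesz-potential step out by hand as a dyadic Green-function computation rather than quoting a Muckenhoupt result, but the content is the same).

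The genuine gap is in your off-diagonal estimate for \eqref{e1.13}. You claim
\[
\sup_{A_j}|\nabla v_k|\lesssim\bigl(2^{\max(j,k)}(r_*+r)\bigr)^{-d}\|g_k\|_{L^1(A_k)},
\]
invoking the large-scale $C^{0,1}$ theory. But the definition \eqref{g.6} of $r_*$ only gives a \emph{mean-value property for $L^2$-averages of the gradient over balls of radius $\ge r_*$}; it yields no pointwise (sup) control of $\nabla v_k$. For systems---which the paper explicitly covers---no such pointwise bound exists in general, and even in the scalar case a bound of this strength would require a De Giorgi/Nash argument on the small scales, which is neither assumed nor needed. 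Without it, your Minkowski summation over $j$ in $L^{2p}$ has nothing to feed on for the off-diagonal pieces.

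The paper avoids this issue by decoupling the two ingredients. It first establishes purely $L^2$ off-diagonal decay on the annuli,
\[
\Bigl(\int_{A_j}|\nabla v|^2\Bigr)^{1/2}\lesssim\sum_{k\ge0}\Bigl(2^{-|k-j|d}\int_{A_k}|g|^2\Bigr)^{1/2},
\]
which comes from energy/Caccioppoli arguments (no $C^{0,1}$ theory required), and then upgrades from $L^2$ to $L^{2p}$ via the \emph{local} Meyers inequality on each annulus,
\[
\Bigl(\fint_{A_j}|\nabla v|^{2p}\Bigr)^{1/(2p)}\lesssim\Bigl(\fint_{A_j^+}|\nabla v|^2\Bigr)^{1/2}+\Bigl(\fint_{A_j^+}|g|^{2p}\Bigr)^{1/(2p)}.
\]
Summing these with the weight and running the Hölder/geometric-series bookkeeping produces \eqref{e1.13}. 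If you replace your pointwise claim by the $L^2$ dyadic estimate and then apply local Meyers annulus-by-annulus, your argument will go through and will in fact coincide with the paper's.
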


The estimate in Proposition~\ref{prop1} entails the following bound on stochastic moments of the sensitivity of the second-order corrector. Note that the right-hand side in particular involves stochastic moments of $\nabla \psi$, $\nabla \phi$, and $\phi$; while the moments of the latter two quantities are controlled due to the known results for the first-order corrector, a bound on stochastic moments of $\nabla \psi$ is yet to be established. While the second-order corrector $\psi$ and the vector potential $\Psi$ are not uniquely defined by \eqref{Equationpsi} and \eqref{Gauge}, uniqueness (up to a random constant) is ensured by imposing the conditions that $\nabla \psi$ and $\nabla \Psi$ be stationary random fields with $\mathbb{E}[\nabla \psi]=0$, $\mathbb{E}[\nabla \Psi]=0$, and $\mathbb{E}[|\nabla \psi|^2+|\nabla \Psi|^2]<\infty$.

\begin{proposition}
\label{StochasticMomentsSensitivity}
Let $\langle \cdot \rangle$ be a stationary, bounded, and uniformly elliptic ensemble of coefficient fields on $\mathbb{R}^d$, $d\geq 3$. Suppose that $\langle\cdot\rangle$ satisfies a coarsened logarithmic Sobolev inequality with exponent $\beta\in [0,1-\frac{2}{d})$ and correlation length $1$.

Let $0<p-1\ll 1$ and $m\gg 1$. Consider a linear functional $h\mapsto Fh$ on vector fields $\mathbb{R}^d\ni x\mapsto h(x)$ given by
\begin{equation*}
Fh\,=\,\int h\cdot g,
\end{equation*}
where $g$ is supported in $\{|x|\leq r\}$ for some radius $r\ge 1$ and satisfies
\begin{equation*}
  \Big(\fint_{\{|x|\leq r\}} |g|^{2p}\Big)^\frac{1}{2p}\,\leq \, r^{-d}.
\end{equation*}
\smallskip
The second-order corrector $\psi$ and the corresponding vector potential $\Psi$ are subject to the sensitivity estimate
\begin{align*}
\mathbb{E}\left[\Big|\Big|\frac{\partial F\nabla(\psi,\Psi)}{\partial a}\Big|\Big|_{CC}^m \right]^{1/m}
\lesssim&
~r^{-(1-\beta)d/2+1}
\mathbb{E}\left[(1+r_\ast)^{m\alpha/p}\right]^{1/2m}
\mathbb{E}\left[\biggl(\fint_{\{|x|\leq 1\}}|\nabla\phi+e|^2 \biggr)^{m} \right]^{1/2m}
\\&
+r^{-(1-\beta)d/2} \mathbb{E}\left[(1+r_\ast)^{m\alpha/p}\right]^{1/2m}
\mathbb{E}\left[\biggl(\fint_{\{|x|\leq 1\}}|\nabla\psi|^2 + |\phi|^2 \biggr)^{m}\right]^{1/2m}.
\end{align*}
\end{proposition}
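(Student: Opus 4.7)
The strategy is to convert the deterministic sensitivity estimate of Proposition~\ref{prop1} into a stochastic moment estimate by means of Minkowski's inequality, supplemented by stationarity to reduce moments of field averages on arbitrary partition cells to the unit-ball moments appearing on the right-hand side of the claim.

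Schematically, writing the right-hand side of Proposition~\ref{prop1} as $(\int|g|^{2p})^{1/p}(S_1^{1/q}+S_2^{1/q})$ with $q:=p/(p-1)$ and $S_i:=\sum_D X_D^{(i)}$, I first raise the deterministic bound to the $m/2$-th power. Since $m/(2q)\ge 1$ for $m\gg 1$, Minkowski's inequality in $L^{m/(2q)}$ applies and yields
\[
\mathbb{E}\big[S_i^{m/(2q)}\big]^{2q/m}\le \sum_D \mathbb{E}\big[(X_D^{(i)})^{m/(2q)}\big]^{2q/m}.
\]
On each cell $D$ I would apply Cauchy--Schwarz to decouple the weight $(\min_D\omega)^{-m/(2p)}$ (which depends on the random radius $R:=r_*+r$) from the field integral $(\fint_D|\nabla\phi+e|^2)^{m/2}$, respectively $(\fint_D(|\nabla\psi|^2+|\phi|^2))^{m/2}$. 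Using $\min_D\omega_\gamma=(\operatorname{dist}(D,0)/R+1)^\gamma$, together with $R\le(1+r_*)r$ (valid for $r\ge 1$) and $\operatorname{dist}(D,0)+R\ge \operatorname{dist}(D,0)+r$, the $r_*$-dependence factors out as $(1+r_*)^{m\alpha/p}$ and the remaining spatial weight becomes deterministic; for $S_2$ it is $(\operatorname{dist}(D,0)+r)^{-m\alpha/(2p)}$, whereas for $S_1$ the prefactor $(r_*+r)^{2q}$ combines with $\omega_{\alpha-2p}$ so that the same $R^{m\alpha/p}$ is extracted but the spatial weight is $(\operatorname{dist}(D,0)+r)^{-m(\alpha-2p)/(2p)}$. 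Finally, stationarity of $\nabla\phi+e$, $\nabla\psi$ and $\phi$, combined with a covering of $D$ by unit balls and Jensen's inequality, replaces $\mathbb{E}[(\fint_D\cdot)^m]^{1/m}$ by $\mathbb{E}[(\fint_{\{|x|\le 1\}}\cdot)^m]^{1/m}$.

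The remaining step is to evaluate the deterministic weighted sum
\[
\sum_D|D|^{(2p-1)/(p-1)}\,(\operatorname{dist}(D,0)+r)^{-\alpha q/p}
\]
(and its analogue with $\alpha$ replaced by $\alpha-2p$ for the first term). The coarsening structure of the partition produces $\sim 2^{kd(1-\beta)}$ elements in the dyadic annulus $\{2^k\le\operatorname{dist}(D,0)<2^{k+1}\}$, each of volume $\sim 2^{kd\beta}$, so each scale contributes of order $2^{kd[(1-\beta)+\beta(2p-1)/(p-1)]}(2^k+r)^{-\alpha/(p-1)}$. The hypothesis $\alpha<d(2p-1)$ guarantees that this decays in $k$ past $2^k=r$, so the sum is dominated by the scales $2^k\lesssim r$ and is of order $r^{[d(p(1+\beta)-1)-\alpha]/(p-1)}$. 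Combining this with $(\int|g|^{2p})^{1/(2p)}\lesssim r^{-d(2p-1)/(2p)}$ and with the factor $r^{\alpha/(2p)}$ extracted from the Cauchy--Schwarz step, the $\alpha$-dependence cancels algebraically and the net exponent collapses to $d(\beta-1)/2=-d(1-\beta)/2$ for the second term. The first-term analysis is identical after shifting $\alpha\mapsto\alpha-2p$ in the spatial weight, which through the same cancellation adds an extra factor $r$ and produces the exponent $-d(1-\beta)/2+1$.

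The main obstacle I anticipate lies in the weighted sum evaluation: the exponent $(2p-1)/(p-1)$ on $|D|$ blows up as $p\to 1^+$, so the final cancellation to the clean exponent $-d(1-\beta)/2$ rests on a tight balance between the growth of $|D|^{(2p-1)/(p-1)}$, the Meyers weight $(\operatorname{dist}(D,0)+r)^{-\alpha q/p}$ (for which the sharpness of $\alpha<d(2p-1)$ is crucial, and which in turn reflects the hypothesis $\beta<1-2/d$), and the precise algebraic cancellation with $(\int|g|^{2p})^{1/(2p)}$ and $r^{\alpha/(2p)}$. A secondary subtlety is the stationarity reduction from $\mathbb{E}[(\fint_D\cdot)^m]$ to $\mathbb{E}[(\fint_{\{|x|\le 1\}}\cdot)^m]$: since $D$ can be much larger than a unit ball, one should employ a unit-ball covering together with a convexity argument rather than a pointwise Jensen bound, in order to avoid the constant depending on $|D|$.
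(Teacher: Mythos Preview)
Your argument is correct and the algebra lands on the right exponents, including the identification of the constraint $d[p(1+\beta)-1]+2p<\alpha<d(2p-1)$ (hence $\beta<1-2/d$) that makes the deterministic weighted sum converge for the $S_1$ term.

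The route differs from the paper's in the order of the two decoupling steps. The paper first applies H\"older's inequality to the \emph{expectation} (keeping the sum over $D$ intact) to separate the $r_\ast$-dependent weight from the field integrals; it then rewrites the sum as a weighted average with weights $r^{-d}|D|(\min_D|x|/r+1)^{-\gamma/(p-1)+\beta dp/(p-1)}$ (using also $|D|\lesssim r^{\beta d}(\min_D|x|/r+1)^{\beta d}$) and applies Jensen's inequality to pull the power $m(p-1)/p>1$ inside the sum, so that stationarity can be invoked term by term. You instead apply Minkowski's inequality in $L^{m(p-1)/(2p)}$ to the \emph{sum} first, pushing the expectation onto each cell, and only then use Cauchy--Schwarz cell-by-cell to split off the $r_\ast$-moment. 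Both approaches feed into the same stationarity reduction $\mathbb{E}[(\fint_D\cdot)^m]\lesssim\mathbb{E}[(\fint_{B_1}\cdot)^m]$ (which you correctly justify via a unit-ball covering plus convexity), and both ultimately rest on the same summability condition; your Minkowski route is somewhat more direct in that it avoids the intermediate Jensen normalization step, while the paper's organization makes the role of the weight $\omega_\gamma$ and the constraint $-\alpha+2p+\beta dp<-d(p-1)$ slightly more transparent.
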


We now provide the estimate on stochastic moments of the gradient of the second-order corrector $\nabla \psi$ that is needed to convert the statement in the previous proposition into an actual bound on stochastic moments of the sensitivity of the functionals.

\begin{proposition}\label{proppsimoment}
Let $\langle \cdot \rangle$ be a stationary, bounded, and uniformly elliptic ensemble of coefficient fields on $\mathbb{R}^d$, $d\geq 3$. Suppose that $\langle\cdot\rangle$ satisfies a coarsened logarithmic Sobolev inequality with exponent $\beta\in [0,1-\frac{2}{d})$ and correlation length $1$.
Then for any $p \in [2,\infty)$ the estimate
 \begin{equation}\nonumber
  \mathbb{E}\Bigg[ \biggl( \fint_{\{|x|\leq 1\}} |\nabla \psi|^2 + |\nabla \Psi|^2 \biggr)^{\frac p2} \Bigg]^{\frac1p} \leq C(d,\beta,\lambda) p^{C(d,\beta,\lambda)}
\end{equation}
holds.
\end{proposition}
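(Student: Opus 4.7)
The plan is to bypass the sensitivity/LSI machinery that was developed for the first-order corrector in \cite{GNO4} and deduce the bound directly from the large-scale Calder\'on--Zygmund-type $L^p$ theory for random elliptic operators (the Duerinckx--Gloria--Otto upgrade, referenced as estimate (I1) in the introduction), combined with the known moment bounds on the first-order corrector $(\phi,\sigma)$. This mirrors the remark in the introduction that, thanks to the availability of a large-scale $L^p$ theory, the second-order corrector bound is obtained ``cheaply''.

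First I would apply the stationary-$L^p$ theory to the defining equation \eqref{Equationpsi}, which for $\psi_{ij}$ has the form $-\nabla\cdot(a\nabla\psi_{ij})=\nabla\cdot g$ with $g:=(\phi_i a-\sigma_i)e_j$. Since $\nabla\psi_{ij}$ is constructed as a stationary gradient field with vanishing expectation and the right-hand side is a stationary mean-zero field in divergence form, the $L^p$ theory yields
\begin{align*}
\mathbb{E}\bigl[|\nabla\psi(0)|^p\bigr]^{1/p}\,\le\,C(d,\lambda,\beta)\,p^{C(d,\lambda,\beta)}\,\mathbb{E}\bigl[|g(0)|^p\bigr]^{1/p}
\end{align*}
for every $p\ge 2$, the polynomial dependence on $p$ being the decisive feature. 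The pointwise bound $|a|\le 1$ gives $|g|\le |\phi|+|\sigma|$, and the stretched-exponential moment bounds on $(\phi,\sigma)$ valid under the coarsened LSI with $\beta<1-2/d$ and $d\ge 3$ (see \cite{GloriaOttoNearOptimal,GNO4,BellaGiuntiOtto}) supply $\mathbb{E}[|\phi|^p+|\sigma|^p]^{1/p}\le C\,p^{C}$. Stationarity of $\nabla\psi$ together with Jensen's inequality (applicable since $p\ge 2$) then upgrades the pointwise bound to the $B_1$-average form in the statement, since
\begin{align*}
\mathbb{E}\biggl[\Bigl(\fint_{\{|x|\le 1\}}|\nabla\psi|^2\Bigr)^{p/2}\biggr]^{1/p}\le \mathbb{E}\biggl[\fint_{\{|x|\le 1\}}|\nabla\psi|^p\biggr]^{1/p}=\mathbb{E}[|\nabla\psi(0)|^p]^{1/p}.
\end{align*}

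For $\Psi$ I would exploit the gauge \eqref{Gauge}, which recasts the equation for $\Psi_{ijkl}$ as a constant-coefficient Poisson problem $-\Delta\Psi_{ijkl}=\nabla\cdot(q^1_{ijl}e_k-q^1_{ijk}e_l)$; since $q^1_{ijk}$ has vanishing expectation thanks to the subtraction of $\varepsilon a^1_{\shom,ijk}$ in its definition, no random-operator theory is needed and the classical stationary-$L^p$ Calder\'on--Zygmund estimate for $-\Delta$ delivers $\mathbb{E}[|\nabla\Psi(0)|^p]^{1/p}\lesssim p^{C}\,\mathbb{E}[|q^1(0)|^p]^{1/p}$. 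The right-hand side is a symmetrized combination of $a\nabla\psi$, $a\phi-\sigma$, and the bounded deterministic constant $\varepsilon a^1_{\shom}$ (bounded independently of $\varepsilon$ by Theorem \ref{ErrorEstimateNonsymmetric}), so it is controlled by the bounds already established for $\psi$, $\phi$, and $\sigma$. The main obstacle is securing the polynomial-in-$p$ constant in the large-scale $L^p$ estimate for the random operator: this is precisely the point where one must invoke the Duerinckx--Gloria--Otto theory rather than a Meyers-type perturbation around $p=2$, which would cover only $p$ slightly above $2$. Once this ingredient is in hand, the remainder of the argument is a chain of Jensen, stationarity, and pre-existing first-order moment bounds.
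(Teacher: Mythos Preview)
Your high-level plan matches the paper's: feed the defining equation \eqref{Equationpsi} into a large-scale $L^p$ theory and close using the known moments of $(\phi,\sigma)$; then treat $\Psi$ the same way via \eqref{Gauge}. The paper also remarks that the argument for $\Psi$ is verbatim. So on the level of strategy you are aligned.

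The gap is in the sentence ``the $L^p$ theory yields $\mathbb{E}[|\nabla\psi(0)|^p]^{1/p}\le Cp^{C}\mathbb{E}[|g(0)|^p]^{1/p}$''. Proposition~\ref{propCZ} (the estimate you call (I1)) is a \emph{quenched} statement for \emph{decaying} $u$ and $g$: it reads
\[
\Bigl(\int\Bigl(\fint_{B_{\underline{r_*}(x)}(x)}|\nabla u|^2\Bigr)^p dx\Bigr)^{1/p}\lesssim p^2\Bigl(\int\Bigl(\fint_{B_1(x)}|g|^2\Bigr)^p dx\Bigr)^{1/p},
\]
integrated over all of $\mathbb{R}^d$, with a random averaging scale $\underline{r_*}(x)$ on the left. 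Neither side is finite when applied directly to the stationary corrector problem, and the random radius on the left does not disappear for free. Turning this into the annealed pointwise bound you wrote is exactly the content of the paper's proof and is not a one-liner: one localises by a cutoff on scale $R$ (this is Lemma~\ref{lmiteration}), applies Proposition~\ref{propCZ} together with a Sobolev step (Lemma~\ref{lm4}) to the compactly supported problem, sends $R\to\infty$ by ergodicity, and obtains an \emph{iteration} that upgrades control of $\mathbb{E}[(\fint_{B_1}|\nabla\psi|^2)^{q/2}]$ to control of $\mathbb{E}[(\fint_{B_1}|\nabla\psi|^2)^{q^*/2}]$ with $q^*=\frac{dq}{d-q}$, at the price of a H\"older factor $\mathbb{E}[(\underline{r_*}^d)^{q^*/2(\alpha-1)}]^{(\alpha-1)/q^*}$ coming from the random averaging scale. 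One then starts from the a priori bound $\mathbb{E}[|\nabla\psi|^2]\lesssim 1$ (provided by the construction of $\psi$ with stationary gradient) and iterates finitely many times to reach any prescribed $p$. The polynomial-in-$p$ constant survives because each step contributes a fixed power of $q^*$ and the moments of $r_*$ are stretched exponential by Theorem~\ref{PhiStarBound}.

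A secondary issue: you phrase the bound in terms of $|\nabla\psi(0)|$, but without smoothness of $a$ (and certainly in the systems case) $\nabla\psi$ has no pointwise meaning; the paper works throughout with the local $L^2$ average $\fint_{B_1}|\nabla\psi|^2$, and this is also what Proposition~\ref{propCZ} actually controls. For $\Psi$ your shortcut via the classical Calder\'on--Zygmund theory for $-\Delta$ is in principle available (there the stationary version can be obtained from Fourier multipliers without a random scale), but you still need the bound on $\nabla\psi$ to control $q^1$, so the bootstrap above cannot be avoided.
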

This estimate on the stochastic moments of the gradient of the second-order corrector is based on the following large-scale Calderon-Zygmund-type $L^p$ theory for operators with random coefficients.
\begin{proposition}\label{propCZ}
Let $\langle \cdot \rangle$ be a stationary, bounded, and uniformly elliptic ensemble of coefficient fields on $\mathbb{R}^d$, $d\geq 3$. Suppose that $\langle\cdot\rangle$ satisfies a coarsened logarithmic Sobolev inequality with exponent $\beta\in [0,1-\frac{2}{d})$ and correlation length $1$.

There exists a threshold $\delta=\delta(d,\lambda)>0$ and a field 
$0<\underline{r_*}\le r_*$ (with $r_*$ from \eqref{g.6}) such that for any suitably decaying 
scalar field $u$ and vector field $g$ related by
\begin{equation}\nonumber
-\nabla\cdot (a \nabla u)= \nabla \cdot g
\end{equation}
and any exponent $1<p<\infty$ we have
\begin{equation}\label{I1}
\biggl( \int\Big(\fint_{B_{\underline{r_*}}(x)}|\nabla u|^2\Big)^p\dx\biggr)^{\frac 1p} \lesssim p^2  \biggl( \int \biggl( \fint_{B_1(x)} |g|^2 \biggr)^p \dx \biggr)^{\frac 1p}
\end{equation}
where $\lesssim$ means $\le C$ for a constant only depending on $d$ and $\lambda$.
\end{proposition}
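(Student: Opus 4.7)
The plan is to adapt the classical Calder\'on-Zygmund $L^p$ theory for constant-coefficient elliptic equations to the random-coefficient setting by using the large-scale $C^{0,1}$ regularity encoded in $r_*$ as a substitute for the pointwise $C^{1,\alpha}$ regularity available for constant coefficients. I would first introduce the random scale $\underline{r_*}(x) \leq r_*$ as the smallest dyadic radius above which the mean-value property
\[
\sup_{B_{R/2}(x)} |\nabla v|^2 \;\lesssim\; \fint_{B_R(x)} |\nabla v|^2 \quad \text{for all } R \geq \underline{r_*}(x),
\]
holds for all $a$-harmonic and $a^*$-harmonic functions $v$, and then reformulate the target bound in terms of the (adapted) maximal operator $\mathcal{M}(w)(x) := \sup_{R \geq \underline{r_*}(x)} \fint_{B_R(x)} w$. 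The starting point is the deterministic $L^2$ energy estimate $\|\nabla u\|_{L^2} \lesssim \|g\|_{L^2}$, which delivers the case $p = 2$ with constant $1$.

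To extrapolate to $p > 2$, I would run an Acerbi--Mingione/Caffarelli--Peral-type perturbation and good-$\lambda$ argument. On any ball $B_R(x_0)$ with $R \geq \underline{r_*}(x_0)$, decompose $u = u_0 + u_1$ where $u_0$ is $a$-harmonic in $B_R$ with $u_0 = u$ on $\partial B_R$, and $u_1$ solves $-\nabla \cdot (a\nabla u_1) = \nabla \cdot g$ with zero Dirichlet data on $B_R$. The large-scale regularity controls $u_0$ pointwise by $\sup_{B_{R/2}(x_0)} |\nabla u_0|^2 \lesssim \fint_{B_R(x_0)} |\nabla u|^2$, while the Dirichlet energy estimate controls the residual by $\fint_{B_R(x_0)} |\nabla u_1|^2 \lesssim \fint_{B_R(x_0)} |g|^2$. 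Combining these two ingredients on the Vitali family produced by a standard Calder\'on--Zygmund stopping-time construction at level $\lambda$ yields a good-$\lambda$ inequality of the form
\[
\big|\{\mathcal{M}(|\nabla u|^2) > A\lambda\} \cap \{\mathcal{M}(|g|^2) \leq \gamma\lambda\}\big| \;\leq\; \varepsilon\, \big|\{\mathcal{M}(|\nabla u|^2) > \lambda\}\big|
\]
with explicit $A$ and $\gamma$ and $\varepsilon$ small. Integrating this against $\lambda^{p-1}\,d\lambda$ via the layer-cake representation and applying the $L^p$-boundedness of $\mathcal{M}$ on both sides then produces the $L^p$ estimate \eqref{I1} in terms of averages on $B_1$. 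The case $1 < p < 2$ follows by duality applied to the formal adjoint operator $-\nabla \cdot (a^* \nabla \cdot)$, which is precisely why $\underline{r_*}$ is built to work simultaneously for $a$ and $a^*$ (as encoded in the $\max_{a,a^*}$ of \eqref{g.6}).

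The main obstacle is obtaining the sharp prefactor $p^2$: a naive iteration of the good-$\lambda$ inequality produces a constant exponential in $p$, which is inadequate. To recover $p^2$, one must realize the $L^p$ bound as the outcome of Marcinkiewicz interpolation between the weak-type $(2,2)$ bound for the adapted maximal function $\mathcal{M}$ and the BMO-type control inherited from the mean-value property above $\underline{r_*}$; the classical sharp Marcinkiewicz constant then produces factors scaling like $p/(p-1)$ near $p = 1$ and like $p$ near $p = \infty$, and composing this with the analogous bound on the adjoint equation yields the stated $p^2$ dependence. A secondary technical point is to verify that $\underline{r_*}$, defined via a dyadic minimality condition, inherits sufficiently strong stochastic integrability from $r_*$ so that the deterministic bound can be upgraded into the stated $\mathbb{P}$-almost-sure estimate.
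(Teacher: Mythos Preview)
Your overall strategy---use the large-scale Lipschitz regularity encoded in $r_*$ as a substitute for pointwise regularity and run a Calder\'on--Zygmund-type argument---matches the paper's, and your eventual appeal to a BMO endpoint plus interpolation is precisely what the paper does (indeed the paper's proof is little more than a reference to \cite{DuerinckxGloriaOtto} together with a short computation). The good-$\lambda$ route you outline first is a legitimate alternative, and you are right that it typically yields suboptimal constants in $p$.

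However, your account of where the factor $p^2$ comes from is wrong. It does not arise from ``composing with the analogous bound on the adjoint equation'': duality transfers an $L^p\to L^p$ bound to an $L^{p'}\to L^{p'}$ bound, it does not square the constant. In the paper the two factors of $p$ have distinct and concrete origins. The first comes from the interpolation step itself: the $L^2\to L^2$ energy estimate and the $L^\infty\to\mathrm{BMO}$ bound (the latter being the manifestation of the mean-value property above $\underline{r_*}$) interpolate to an $L^p\to L^p$ bound with constant of order $p$, yielding
\[
\biggl( \int \Big(\fint_{B_{\underline{r_*}(x)}(x)} |\nabla u|^2 \Big)^p \,dx \biggr)^{1/p}
\lesssim p\, \biggl( \int \Big( \sup_{R \ge 1} \fint_{B_R(x)} |g|^2 \Big)^p \,dx \biggr)^{1/p}.
\]
The second factor of $p$ then enters when one replaces the right-hand side by the target $\fint_{B_1(x)} |g|^2$: one checks that $\sup_{R\ge 1}\fint_{B_R(x)}|g|^2$ is dominated by the Hardy--Littlewood maximal function of $h(x):=\fint_{B_1(x)}|g|^2$, and the $L^p\to L^p$ bound for the maximal operator contributes a further factor of order $p$. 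This second step---passing from the local maximal function of $|g|^2$ to unit-ball averages---is absent from your outline, and is where your bookkeeping goes astray. (A minor related point: the relevant interpolation is not Marcinkiewicz between a weak-$(2,2)$ bound and ``BMO''; it is the $L^2$--$\mathrm{BMO}$ interpolation of Fefferman--Stein/Stampacchia type, whose constant tracking gives the linear-in-$p$ growth.)

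Two smaller remarks. Your formulation of the mean-value property as a pointwise bound $\sup_{B_{R/2}}|\nabla v|^2 \lesssim \fint_{B_R}|\nabla v|^2$ is too strong for the systems case the paper explicitly covers; the large-scale $C^{0,1}$ theory only controls $L^2$ averages $\fint_{B_r}|\nabla v|^2$ for $r\ge \underline{r_*}$, which is all that is needed. And the ``secondary technical point'' you raise about stochastic integrability of $\underline{r_*}$ is beside the point here: Proposition~\ref{propCZ} is a purely deterministic estimate, valid realization by realization; the stochastic moments of $r_*$ enter only later, in Proposition~\ref{proppsimoment}.
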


Putting Proposition \ref{StochasticMomentsSensitivity} and Proposition \ref{proppsimoment} together, by the $L^p$ version of the logarithmic Sobolev inequality (see Lemma~\ref{LpLSI}) we infer the following estimate on stochastic moments of ``spatial averages'' of the second-order corrector $\psi$ and the corresponding vector potential $\Psi$.
\begin{proposition}
\label{MomentBoundsCorrectorAverages}
Let $\langle \cdot \rangle$ be a stationary, bounded, and uniformly elliptic ensemble of coefficient fields on $\mathbb{R}^d$, $d\geq 3$. Suppose that $\langle\cdot\rangle$ satisfies a coarsened logarithmic Sobolev inequality with exponent $\beta\in [0,1-\frac{2}{d})$ and correlation length $1$.

Let $0<p-1\ll 1$ and let $r\geq 1$. Consider a linear functional $h\mapsto Fh$ on vector fields $\mathbb{R}^d\ni x\mapsto h(x)$ given by
\begin{equation*}
Fh\,=\,\int h\cdot g,
\end{equation*}
where $g$ is supported in $\{|x|\leq r\}$ and satisfies
\begin{equation*}
  \Big(\fint_{\{|x|\leq r\}} |g|^{2p}\Big)^\frac{1}{2p}\,\leq \, r^{-d}.
\end{equation*}
Then there exists a constant $C(d,\lambda,\beta)$ such that the stochastic moments of the functional $F\nabla (\psi,\Psi)$ (by which we understand any functional of the form $F\nabla \psi_{ij}$ or $F\nabla \Psi_{ijkl}$) are estimated for any $m\geq \frac{2p}{p-1}$ by
\begin{align*}
\mathbb{E}
\left[
\big|F\nabla(\psi,\Psi)\big|^m
\right]^{1/m}
\leq&
C(d,\lambda,\beta) m^{C(d,\lambda,\beta)}  r^{-(1-\beta)d/2+1}.
\end{align*}
\end{proposition}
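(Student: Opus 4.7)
\smallskip

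The plan is to combine the three preceding propositions via the $L^p$ logarithmic Sobolev inequality. First I would verify that the random variable $\xi := F\nabla(\psi,\Psi)$ has zero expectation: since $\nabla \psi$ and $\nabla \Psi$ are stationary with $\mathbb{E}[\nabla \psi]=0$ and $\mathbb{E}[\nabla \Psi]=0$ (as postulated in the uniqueness convention stated just before Proposition~\ref{StochasticMomentsSensitivity}), and since $F$ is linear, we have $\mathbb{E}[\xi]=\int \mathbb{E}[\nabla(\psi,\Psi)]\cdot g=0$. Consequently $\mathbb{E}[|\xi|^m]^{1/m}=\mathbb{E}[|\xi-\mathbb{E}[\xi]|^m]^{1/m}$, so it suffices to bound the centered moments.

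Next I would apply Lemma~\ref{LpLSI} with $2q=m$, which yields
\begin{equation*}
\mathbb{E}\bigl[|\xi|^{m}\bigr]^{1/m}
\;\lesssim\;
\sqrt{m}\;
\mathbb{E}\Bigl[\bigl\|\tfrac{\partial \xi}{\partial a}\bigr\|_{CC}^{m}\Bigr]^{1/m}.
\end{equation*}
The right-hand side is exactly the quantity estimated by Proposition~\ref{StochasticMomentsSensitivity}, which gives (after choosing $0<p-1\ll 1$ and $m\geq 2p/(p-1)$, so that the exponents $m\alpha/p$ are admissible)
\begin{align*}
\mathbb{E}\Bigl[\bigl\|\tfrac{\partial \xi}{\partial a}\bigr\|_{CC}^{m}\Bigr]^{1/m}
\;\lesssim\;&~
r^{-(1-\beta)d/2+1}\,
\mathbb{E}\bigl[(1+r_\ast)^{m\alpha/p}\bigr]^{1/2m}
\mathbb{E}\Bigl[\Bigl(\fint_{\{|x|\leq 1\}}|\nabla\phi+e|^2\Bigr)^{m}\Bigr]^{1/2m}
\\
&+
r^{-(1-\beta)d/2}\,
\mathbb{E}\bigl[(1+r_\ast)^{m\alpha/p}\bigr]^{1/2m}
\mathbb{E}\Bigl[\Bigl(\fint_{\{|x|\leq 1\}}|\nabla\psi|^2+|\phi|^2\Bigr)^{m}\Bigr]^{1/2m}.
\end{align*}
Since $r\geq 1$, the first term dominates the second in terms of its $r$-dependence (one may simply absorb the second into the first).

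The remaining task is to bound the four moment factors by $C(d,\lambda,\beta)\,m^{C(d,\lambda,\beta)}$. For the first-order corrector, the moment bounds on $\fint_{\{|x|\leq 1\}}|\nabla\phi+e|^2$ and on $\fint_{\{|x|\leq 1\}}|\phi|^2$ with stretched-exponential stochastic integrability are known from \cite{GNO4, GloriaOttoNearOptimal}; they yield $\mathbb{E}[(\cdots)^{m}]^{1/m}\leq C\,m^{C}$. The moment of $\fint_{\{|x|\leq 1\}}|\nabla\psi|^2$ is precisely the content of Proposition~\ref{proppsimoment}, which furnishes the same $p^{C}$-type growth. Finally, the minimal-radius field $r_\ast$ defined in \eqref{g.6} has stretched-exponential moments under the present LSI hypothesis (again from \cite{GNO4}), so $\mathbb{E}[(1+r_\ast)^{m\alpha/p}]^{1/m}\leq C\,m^{C}$ as well, the constant $\alpha/p$ being of order one. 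Multiplying these contributions together and with the $\sqrt m$ from the LSI yields the claimed bound $C(d,\lambda,\beta)\,m^{C(d,\lambda,\beta)}\,r^{-(1-\beta)d/2+1}$.

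The conceptually delicate step is the last one: tracking that all polynomial-in-$m$ losses accumulate into a single factor $m^{C}$ without contaminating the $r$-scaling. This is purely bookkeeping, but it requires that the exponents $m\alpha/p$ in the $r_\ast$-moment and the exponents in the $\nabla\psi$-moment be handled uniformly in $m$, which is why Proposition~\ref{proppsimoment} is formulated for \emph{all} $p\in [2,\infty)$ rather than for a single exponent. No new estimate is needed; the proposition follows by a careful application of Hölder's inequality in the sensitivity bound followed by the above substitutions.
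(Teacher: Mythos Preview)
Your proposal is correct and follows essentially the same route as the paper: bound the carr\'e-du-champ via Proposition~\ref{StochasticMomentsSensitivity}, estimate the moment factors on the right-hand side using Proposition~\ref{proppsimoment} for $\nabla\psi$ and the known results from \cite{GNO4} (packaged in the paper as Theorem~\ref{PhiStarBound}) for $r_\ast$ and $(\phi,\sigma)$, then invoke Lemma~\ref{LpLSI} together with $\mathbb{E}[F\nabla(\psi,\Psi)]=0$. The only cosmetic difference is that the paper passes from the moment bound on $\phi$ to one on $\nabla\phi+e$ via the Caccioppoli inequality $\int_{\{|x|\leq 1\}}|\nabla\phi_i+e_i|^2\lesssim \int_{\{|x|\leq 2\}}|\phi_i+x_i|^2$, rather than citing a gradient bound directly.
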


Finally, the next lemma provides a means to translate the estimates on ``averages'' of the gradient of the second-order corrector $\psi$, which Proposition \ref{MomentBoundsCorrectorAverages} provides, into estimates on the corrector itself (the same applying also to the vector potential $\Psi$).

\begin{lemma}
\label{IntegrateLemma}
Let $\gamma>0$, $m\geq 2$, and $K\geq 0$. Let $u=u(x,a)$ be a random function subject to the estimates
\begin{align}
\label{IntLemma.AssumptionGradient}
\mathbb{E}\left[\left(\fint_{\{|x-x_0|\leq 1\}} |\nabla u|^2 \,dx\right)^{m/2}\right]^{1/m}
\leq K
\quad\quad\text{for all }x_0\in \mathbb{R}^d
\end{align}
and
\begin{align}
\label{IntLemma.AssumptionGradientAverages}
\mathbb{E}\left[\left(\int \nabla u \cdot g \,dx\right)^m \right]^{1/m}
\leq \frac{K}{r^\gamma}
\end{align}
for all $r\geq 1$, all $x_0\in \mathbb{R}^d$, and all vector fields $g:\mathbb{R}^d \rightarrow \mathbb{R}^d$ supported in $\{|x-x_0|\leq r\}$ satisfying
\begin{align}\nonumber
\Big(\fint_{\{|x-x_0|\leq r\}} |g|^{2+1/d} \,dx\Big)^{1/(2+1/d)}\,\leq \, r^{-d}.
\end{align}
Then estimates of the form
\begin{align}\nonumber
\mathbb{E}\left[\left(\fint_{\{|x|\leq r\}} \left|u-\fint_{\{|x|\leq r\}}u\right|^2 \,dx\right)^{m/2} \right]^{1/m}
\leq
\begin{cases}
C_3 K r^{1-\gamma} &\text{for }\gamma<1,
\\
C_3 K \sqrt{\log(r+2)} &\text{for }\gamma=1,
\\
C_3 K &\text{for }\gamma>1,
\end{cases}
\end{align}
and
\begin{align}
\label{IntLemma.Result2}
\mathbb{E}\left[\left(r^{-d/2}\left|\left|u-\fint_{\{|x|\leq r\}}u\right|\right|_{H^{-1}(\{|x|\leq r\})} \right)^{m} \right]^{1/m}
\leq
\begin{cases}
C_3 K r^{2-\gamma} &\text{for }\gamma<2,
\\
C_3 K \log(r+2) &\text{for }\gamma=2,
\\
C_3 K &\text{for }\gamma>2,
\end{cases}
\end{align}
hold for all $r\geq 1$, the constant $C_3$ depending on $\gamma$, but being independent of $m$.
\end{lemma}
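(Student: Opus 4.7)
The plan is to use Bogovski's lemma to convert the gradient-average bound~\eqref{IntLemma.AssumptionGradientAverages} into moment estimates for dyadic increments of the local spatial averages $\tilde u_s(x):=\fint_{B_s(x)} u(y)\,dy$, and then to telescope from the unit scale up to the scale $r$. For any $s\ge 1$ and base point $x$, Bogovski's lemma produces a vector field $h_{s,x}$ supported in $B_{2s}(x)$ with
$$
\nabla\cdot h_{s,x} \;=\; \frac{1}{|B_s(x)|}\mathbf{1}_{B_s(x)} \;-\; \frac{1}{|B_{2s}(x)|}\mathbf{1}_{B_{2s}(x)},
$$
and standard Bogovski bounds yield $\bigl(\fint_{B_{2s}(x)}|h_{s,x}|^{2+1/d}\bigr)^{1/(2+1/d)} \lesssim s^{1-d}$. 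Integration by parts gives $\tilde u_s(x)-\tilde u_{2s}(x) = -\int \nabla u \cdot h_{s,x}$, so the rescaled version of~\eqref{IntLemma.AssumptionGradientAverages} applied on $B_{2s}(x)$ delivers the per-scale increment bound $\mathbb{E}\bigl[|\tilde u_s(x)-\tilde u_{2s}(x)|^m\bigr]^{1/m} \lesssim K\,s^{1-\gamma}$.

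For the first conclusion, decompose
$$
u(x) - \fint_{\{|y|\le r\}} u \;=\; \bigl(u(x)-\tilde u_1(x)\bigr) + \bigl(\tilde u_1(x)-\tilde u_r(x)\bigr) + \bigl(\tilde u_r(x)-\fint_{\{|y|\le r\}} u\bigr),
$$
square, and average in $x$ over $\{|x|\le r\}$. The first piece is controlled by the local Poincar\'e inequality combined with~\eqref{IntLemma.AssumptionGradient} after covering $\{|x|\le r\}$ by unit balls, producing a contribution $\lesssim K$ uniformly in $r$; the last piece is handled by one further Bogovski estimate at scale $r$ comparing averages of $u$ over balls of radius $r$ with different centres; the middle piece is the telescoping sum $\tilde u_1-\tilde u_r = \sum_{k<\log_2 r}(\tilde u_{2^k}-\tilde u_{2^{k+1}})$, to which the per-scale bound is applied. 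For $\gamma\neq 1$, Minkowski's inequality in $L^m(\Omega)$ reduces this to a geometric series dominated by its extreme term, yielding $\lesssim K r^{1-\gamma}$ or $\lesssim K$ as required. At the critical $\gamma=1$, the naive triangle inequality loses a spurious factor of $\log r$; the sharp $\sqrt{\log r}$ is recovered by exploiting approximate $L^2$-orthogonality (in the spatial variable, after integration over $\{|x|\le r\}$) of the dyadic increments, applying Cauchy-Schwarz at the level of the sum of squares rather than the sum itself and then invoking Minkowski in $L^{m/2}(\Omega)$.

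The $H^{-1}$ estimate~\eqref{IntLemma.Result2} follows by duality. For $\varphi\in H^1_0(\{|x|\le r\})$ with $\|\nabla\varphi\|_{L^2}\le 1$, Bogovski yields $h$ supported in $\{|x|\le r\}$ with $\nabla\cdot h = \varphi - \fint_{\{|x|\le r\}}\varphi$ and, via Sobolev-Poincar\'e on $\{|x|\le r\}$, $\|h\|_{L^{2+1/d}} \lesssim r^{1+d/(2+1/d)-(d-2)/2}$; plugging $h$ into~\eqref{IntLemma.AssumptionGradientAverages} produces the per-test-function bound $\lesssim K r^{d/2+2-\gamma}$, matching~\eqref{IntLemma.Result2} after division by $r^{d/2}$. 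Passing from a single $\varphi$ to the moment of the supremum is carried out by a dyadic/eigenfunction covering of the unit ball of $H^1_0$ at geometric scales, again absorbing a logarithmic loss only at the critical $\gamma=2$. The main obstacle throughout is precisely this sharp summation at the critical exponents $\gamma=1$ and $\gamma=2$: avoiding the unnecessary logarithmic factor requires a genuine $L^2$-orthogonality argument rather than the triangle inequality, while all other ingredients (the Bogovski-Sobolev chain, the covering arguments, and the rescalings of~\eqref{IntLemma.AssumptionGradientAverages}) contribute only fixed dimension-dependent multiplicative constants.
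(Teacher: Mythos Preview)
Your telescoping-via-Bogovski strategy is morally close to the paper's approach, and for the $L^2$ estimate in the non-critical regimes $\gamma\neq 1$ it goes through essentially as you describe. There are, however, two genuine gaps.

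\textbf{The $H^{-1}$ estimate.} Your argument produces, for each fixed deterministic test function $\varphi\in H^1_0(\{|x|\le r\})$, the moment bound $\mathbb{E}[|\langle u-\bar u,\varphi\rangle|^m]^{1/m}\lesssim K\,r^{d/2+2-\gamma}$. But the quantity to be estimated is $\mathbb{E}[(\sup_\varphi|\langle u-\bar u,\varphi\rangle|)^m]^{1/m}$, and one cannot exchange the supremum with the expectation. The proposed ``dyadic/eigenfunction covering of the unit ball of $H^1_0$'' does not resolve this: the ball is infinite-dimensional, and an eigenfunction expansion would require summing infinitely many moment bounds with careful control of $\|e_j\|_{L^{2+1/d}}$ against $\lambda_j^{-1}$, which you have not carried out and which does not obviously yield the stated rates. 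The paper sidesteps this entirely by first proving a \emph{deterministic} inequality: with $P_n$ the $L^2$-projection onto piecewise constants on the dyadic grid of mesh $2^{n+1}$, one has for every realization
\[
\Big\|u-\fint u\Big\|_{H^{-1}([-2^N,2^N]^d)}\lesssim \sum_{n=1}^N 2^n\|P_{n-1}u-P_nu\|_{L^2}+\|\nabla u\|_{L^2},
\]
obtained by pairing with $v$, decomposing $v$ via the $P_n$, and using Poincar\'e at each scale. Only afterwards does one take $L^m(\Omega)$-moments of the (finitely many) terms on the right. This reduction to a pathwise bound is the missing idea.

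\textbf{The critical case $\gamma=1$ for the $L^2$ estimate.} You invoke ``approximate $L^2$-orthogonality (in the spatial variable)'' of the increments $\tilde u_{2^k}-\tilde u_{2^{k+1}}$ to upgrade the $\log r$ loss from the triangle inequality to $\sqrt{\log r}$. Ball-average increments are \emph{not} orthogonal in $L^2_x$: unlike the martingale differences $P_{n-1}u-P_nu$ (which are exactly orthogonal because the $P_n$ are nested orthogonal projections), the convolution operators $u\mapsto\fint_{B_s}u$ do not satisfy $P_n P_{n-1}=P_n$. The paper's choice of cube-based conditional averages is precisely what makes the identity $\|u-P_Nu\|_{L^2}^2=\|u-P_0u\|_{L^2}^2+\sum_n\|P_{n-1}u-P_nu\|_{L^2}^2$ exact and hence delivers the sharp $\sqrt{\log r}$. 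Your ball-average increments would need to be replaced by a genuine martingale (or Littlewood--Paley) decomposition for this step to go through.
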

Combining the previous lemma with the estimates from \cite[Theorem 2]{GNO4}, we infer a bound on the $H^{-1}$ norm of the first-order corrector $\phi$ that is of significantly better order than the bound on the $L^2$ norm.
\begin{lemma}
\label{EstimateCorrectorH-1}
Let $\langle \cdot \rangle$ be a stationary, bounded, and uniformly elliptic ensemble of coefficient fields on $\mathbb{R}^d$, $d\geq 3$. Suppose that $\langle\cdot\rangle$ satisfies a coarsened logarithmic Sobolev inequality with exponent $\beta\in [0,1-\frac{2}{d})$ and correlation length $1$.

Then for every $r\geq 1$ there exists a random constant $\mathcal{C}_r(a)$ with
\begin{align*}
\mathbb{E}[\exp(\mathcal{C}_r(a)^{1/C(d,\lambda,\beta)})] \leq 2
\end{align*}
for some constant $C(d,\lambda,\beta)$ such that the stationary first-order homogenization corrector $\phi$, uniquely defined by \eqref{EquationPhi} and the requirements $\mathbb{E}[\phi_i]=0$ and $\mathbb{E}[|\nabla \phi_i|^2]<\infty$, satisfies an estimate of the form
\begin{align*}
r^{-d/2} \left|\left|\phi\right|\right|_{H^{-1}(\{|x|\leq r\})}
\leq
\begin{cases}
\mathcal{C}_r(a) r^{2-d(1-\beta)/2} & \text{for }d(1-\beta)/2 < 2,
\\
\mathcal{C}_r(a) \log (r+2) & \text{for }d(1-\beta)/2 = 2,
\\
\mathcal{C}_r(a) &\text{for }d(1-\beta)/2 > 2.
\end{cases}
\end{align*}
\end{lemma}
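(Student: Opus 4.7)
The plan is to apply Lemma~\ref{IntegrateLemma} componentwise to $u := \phi_i$, with decay exponent $\gamma := d(1-\beta)/2$. With this choice, the three cases $\gamma < 2$, $\gamma = 2$, $\gamma > 2$ of the conclusion \eqref{IntLemma.Result2} match exactly the three cases in the statement of Lemma~\ref{EstimateCorrectorH-1}. The task thus reduces to verifying the two hypotheses \eqref{IntLemma.AssumptionGradient} and \eqref{IntLemma.AssumptionGradientAverages} of the integration lemma for the first-order corrector, and then handling the spatial mean $\fint_{\{|x|\leq r\}}\phi$ as a small correction.

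The first hypothesis -- a uniform local moment bound $\mathbb{E}[(\fint_{\{|x-x_0|\leq 1\}}|\nabla\phi|^2)^{m/2}]^{1/m} \leq C m^{C}$ -- follows from the stationarity of $\nabla\phi$ together with the standard moment estimates on the first-order corrector; alternatively, it is a direct consequence of applying the large-scale Calder\'on--Zygmund estimate Proposition~\ref{propCZ} to the corrector equation \eqref{EquationPhi}. The second hypothesis asks for the decay $\mathbb{E}[|\int\nabla\phi\cdot g|^{m}]^{1/m} \leq C m^{C}\, r^{-d(1-\beta)/2}$ for every $g$ supported in a ball of radius $r$ with the integrability normalization prescribed by the integration lemma. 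This is precisely [GNO4, Theorem 2]: its proof parallels the arguments of Propositions~\ref{prop1} and~\ref{MomentBoundsCorrectorAverages} above but carried out at the level of the first-order corrector, namely one derives a deterministic sensitivity estimate on $\int\nabla\phi\cdot g$ with respect to $a$ and then closes via the $L^{p}$-LSI (Lemma~\ref{LpLSI}), the exponent $d(1-\beta)/2$ reflecting CLT-type cancellation on the scale $r$ under the $\beta$-coarsened LSI.

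Invoking Lemma~\ref{IntegrateLemma} with these inputs and $\gamma = d(1-\beta)/2$ yields a moment bound of the claimed form on $r^{-d/2}\|\phi - \fint_{\{|x|\leq r\}}\phi\|_{H^{-1}(\{|x|\leq r\})}$. To upgrade from the centered corrector to $\phi$ itself, I split $\phi = (\phi - c) + c$ with $c := \fint_{\{|x|\leq r\}}\phi$. Testing the constant $c$ against the Newtonian-type function solving $-\Delta\varphi = 1/|B_r|$ on $\{|x|\leq r\}$ with zero boundary values gives $r^{-d/2}\|c\|_{H^{-1}(\{|x|\leq r\})} \lesssim r\,|c|$. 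Meanwhile $c$ is the average of $\phi$ against $g_0 := |B_r|^{-1}\mathbf{1}_{\{|x|\leq r\}}$, a test function of the normalization required by the integration lemma; since $\mathbb{E}[\phi] = 0$, one more application of the same sensitivity/LSI procedure -- either directly at the level of $\phi$ or via the identity $c = \int\nabla\phi\cdot h$ for $h$ a suitable cutoff of $\nabla(-\Delta)^{-1}g_0$ -- gives $\mathbb{E}[|c|^{m}]^{1/m} \leq C m^{C}\, r^{-d(1-\beta)/2}$. The resulting contribution of size $C m^{C}\, r^{1-d(1-\beta)/2}$ is strictly dominated by the $r^{2-d(1-\beta)/2}$ rate from the centered part.

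Finally, the polynomial-in-$m$ moment bounds of the form $\mathbb{E}[X^m]^{1/m} \leq C m^{C} K(r)$ are converted to a stretched exponential bound $\mathbb{E}[\exp(\mathcal{C}_r(a)^{1/C})]\leq 2$ by the standard Chebyshev-plus-Taylor-expansion argument. I expect the substantive input to be the sensitivity/LSI estimate underlying hypothesis~\eqref{IntLemma.AssumptionGradientAverages}, but this is imported from [GNO4, Theorem 2]; the conceptual content of the present lemma is precisely to combine that $L^2$-level estimate with Lemma~\ref{IntegrateLemma} to trade a factor $r$ of spatial smoothing for improved decay in $r$.
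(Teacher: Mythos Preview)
Your approach matches the paper's: feed Lemma~\ref{IntegrateLemma} with the gradient-average bound from \cite[Theorem~2]{GNO4} (hypothesis~\eqref{IntLemma.AssumptionGradientAverages}) and the local gradient moment bound from Theorem~\ref{PhiStarBound} plus the Caccioppoli inequality (hypothesis~\eqref{IntLemma.AssumptionGradient}), then remove the spatial average $c=\fint_{\{|x|\leq r\}}\phi$ separately.

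There is one quantitative slip in your treatment of $c$. The bound $\mathbb{E}[|c|^m]^{1/m}\leq Cm^{C}r^{-d(1-\beta)/2}$ is not delivered by either route you suggest: writing $c=\int\nabla\phi\cdot h$ with $h$ a cutoff of $\nabla(-\Delta)^{-1}g_0$ incurs a cutoff error of the same order as $c$ itself, and the direct sensitivity/LSI argument on $c$ produces a dual $v_0$ solving $-\nabla\cdot(a^*\nabla v_0)=g_0$ in \emph{non-divergence} form, whose gradient decays only like $|x|^{1-d}$ (one power of $|x|$ worse than in the divergence-form case underlying \cite[Theorem~2]{GNO4}), so that the CC-norm of the sensitivity is of order $r^{1-d(1-\beta)/2}$ rather than $r^{-d(1-\beta)/2}$. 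The paper obtains this weaker bound by telescoping
\begin{align*}
c=\sum_{N\geq 1}\Big(\fint_{\{|x|\leq 2^{N-1}r\}}\phi-\fint_{\{|x|\leq 2^{N}r\}}\phi\Big),
\end{align*}
using that $\fint_{\{|x|\leq 2^{N}r\}}\phi\to\mathbb{E}[\phi]=0$ by ergodicity and bounding each difference by $C m^{C}(2^{N}r)^{1-d(1-\beta)/2}$ via the gradient-average estimate. The resulting contribution $r\,|c|\lesssim r^{2-d(1-\beta)/2}$ then \emph{matches} rather than strictly beats the main term coming from the centered part, which is still sufficient for the stated conclusion.
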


\section{Error representation of the two-scale expansion}
\label{DerivationFormulas}

The derivation of equation \eqref{ErrorTwoScale} which is satisfied by the two-scale expansion is standard and may e.\,g.\ be found in \cite{GNO4}. For the reader's convenience, we give a brief derivation here:
\begin{align*}
&-\nabla \cdot \Big(a \nabla \Big(u_{\shom}+\sum_i \phi_i \partial_i u_{\shom}\Big)\Big)
\\&
=-\nabla \cdot \Big(a\Big(\sum_i\phi_i \nabla \partial_i u_{\shom}\Big)\Big)-\nabla \cdot \Big(a \Big(\sum_i (e_i+\nabla \phi_i) \partial_i u_{\shom}\Big)\Big)
\\&
\overset{\eqref{EquationPhi}}{=}-\nabla \cdot \Big(a\Big(\sum_i\phi_i \nabla \partial_i u_{\shom}\Big)\Big)-\sum_i a (e_i+\nabla \phi_i) \cdot \nabla \partial_i u_{\shom}
\\&
\overset{\eqref{EquationSigma}}{=}-\nabla \cdot \Big(a\Big(\sum_i\phi_i \nabla \partial_i u_{\shom}\Big)\Big)-\sum_i a_{\shom} e_i \cdot \nabla \partial_i u_{\shom} - \sum_{i,j} (\nabla \cdot \sigma_{ij}) \, \partial_i \partial_j u_{\shom}
\\&\hspace{-0.6cm}
\overset{\sigma_{ijk}=-\sigma_{ikj}}{=}
-\nabla \cdot \Big(a\Big(\sum_i\phi_i \nabla \partial_i u_{\shom}\Big)\Big)-\nabla \cdot (a_{\shom}\nabla u_{\shom})
- \nabla \cdot \Big(\sum_{i,j} \sigma_{ij} \partial_i \partial_j u_{\shom}\Big)
\\&
=-\nabla \cdot (a_{\shom}\nabla u_{\shom})-\nabla \cdot \Big(a\Big(\sum_i\phi_i \nabla \partial_i u_{\shom}\Big)-\sum_{i} \sigma_{i}\nabla \partial_i u_{\shom}\Big).
\end{align*}
If the second-order term is included in the two-scale expansion, the previous computation implies together with \eqref{Equationpsi}, \eqref{EquationPsi}, and the skew-symmetry of $\sigma$ and $\Psi$
\begin{align*}
&-\nabla \cdot \Big(a \nabla \Big(u_{\shom}+\sum_i \phi_i \partial_i u_{\shom}+\sum_{i,j}\psi_{ij}\partial_i\partial_j u_{\shom}\Big)\Big)
\\&
=
-\nabla \cdot (a_{\shom}\nabla u_{\shom})-\nabla \cdot \Big(\sum_i (\phi_i a-\sigma_i) \nabla \partial_i u_{\shom}\Big)
-\nabla \cdot \Big(a \nabla \sum_{i,j}\psi_{ij}\partial_i\partial_j u_{\shom}\Big)
\\&
\stackrel{\eqref{Equationpsi}}{=}-\nabla \cdot (a_{\shom}\nabla u_{\shom})
-a:\sum_i\phi_i \nabla^2 \partial_i u_{\shom}
-\nabla \cdot \Big(a \sum_{i,j}\psi_{ij}\nabla \partial_i\partial_j u_{\shom}\Big)
-\sum_{i,j}\nabla \partial_i\partial_j u_{\shom}\cdot a\nabla \psi_{ij}
\\&
\stackrel{\eqref{EquationPsi}}{=}-\nabla \cdot (a_{\shom}\nabla u_{\shom})
-\sum_{i,j,k} \partial_i\partial_j \partial_k u_{\shom} \nabla \cdot \Psi_{ijk}
-\varepsilon \sum_{i,j,k} a_{\shom,ijk}^1 \partial_i\partial_j\partial_k u_{\shom}
-\nabla \cdot \Big(a \sum_{i,j}\psi_{ij}\nabla \partial_i\partial_j u_{\shom}\Big)
\\&
=-\nabla \cdot (a_{\shom}\nabla u_{\shom})
-\nabla \cdot \sum_{i,j,k} \partial_i\partial_j \partial_k u_{\shom} \Psi_{ijk}
-\varepsilon \sum_{i,j,k} a_{\shom,ijk}^1 \partial_i\partial_j\partial_k u_{\shom}
-\nabla \cdot \Big(a \sum_{i,j}\psi_{ij}\nabla \partial_i\partial_j u_{\shom}\Big).
\end{align*}
Using the definition
\begin{align*}
-\nabla \cdot \big(a_{\shom} \nabla u_{\shom}^1 \big) = 
\sum_{i,j,k} a_{\shom,ijk}^1 \partial_i\partial_j\partial_k u_{\shom},
\end{align*}
we deduce 
\begin{align*}
&-\nabla \cdot \Big(a \nabla \Big((u_{\shom}+\varepsilon u_{\shom}^1)+\sum_i \phi_i \partial_i (u_{\shom}+\varepsilon u_{\shom}^1)+\sum_{i,j}\psi_{ij}\partial_i\partial_j u_{\shom}\Big)\Big)
\\&
=-\nabla \cdot (a_{\shom}\nabla u_{\shom})
-\nabla \cdot \sum_{i,j,k} \partial_i\partial_j \partial_k u_{\shom} \Psi_{ijk}
-\nabla \cdot \Big(a \sum_{i,j}\psi_{ij}\nabla \partial_i\partial_j u_{\shom}\Big)
\\&\ \ \ 
-\nabla \cdot \Big(\varepsilon \sum_i (a\phi_i-\sigma_i) \nabla \partial_i u_{\shom}^1\Big).
\end{align*}

In the computation that showed $a_{\shom,ijk}^1=0$ for symmetric coefficient fields (i.\,e.\ the argument preceding \eqref{EquationPsi}), we have made use of ``integration by parts in expectations''. Let us provide details for this argument. Take some cutoff satisfying $\theta\equiv 1$ in $\{|x|\leq 1\}$ and $\theta\equiv 0$ outside of $\{|x|\leq 2\}$ as well as $|\nabla \theta| \leq C$. We then have by stationarity and qualitative ergodicity
\begin{align*}
\lim_{R\rightarrow\infty}
\frac{1}{\int \theta(x/R)\,dx}\int a\nabla \psi_{ij} \cdot e_k \theta(x/R) \,dx
=\mathbb{E}[a\nabla \psi_{ij} \cdot e_k]
\end{align*}
and
\begin{align*}
\lim_{R\rightarrow\infty}
\frac{1}{\int \theta(x/R)\,dx}\int a\nabla \psi_{ij} \cdot \nabla \phi_k \theta(x/R) \,dx
=\mathbb{E}[a\nabla \psi_{ij} \cdot \nabla \phi_k]
\end{align*}
almost surely. To see this, note that the above limits -- with $\lim$ replaced by either $\liminf$ or $\limsup$ -- define a shift-invariant random variable, which according to the definition of qualitative ergodicity must be almost surely constant. The identification of the limit is then a consequence of Fubini's theorem.

However, integration by parts (taking into account \eqref{EquationPhi}) yields
\begin{align}
\label{IntegrationByPartsExpectation}
&\frac{1}{\int \theta(x/R)\,dx}\int a\nabla \psi_{ij} \cdot e_k \theta(x/R) \,dx
+
\frac{1}{\int \theta(x/R)\,dx}\int a\nabla \psi_{ij} \cdot \nabla \phi_k \theta(x/R) \,dx
\\&
\nonumber
=
-\frac{1}{\int \theta(x/R)\,dx}\int \psi_{ij} (e_k+\nabla \phi_k) \cdot R^{-1} a(\nabla \theta)(x/R) \,dx.
\end{align}
Passing to the limit $R\rightarrow \infty$ and using the almost sure sublinear growth of $R\mapsto \big(\fint_{\{|x|\leq 2R\}} |\psi_{ij}|^2 \,dx\big)^{1/2}$ to deduce that the limit of the right-hand side vanishes, we infer
\begin{align*}
\mathbb{E}[a\nabla \psi_{ij} \cdot e_k]
=-\mathbb{E}[a\nabla \psi_{ij} \cdot \nabla \phi_k].
\end{align*}

\section{Proof of Theorem \ref{ErrorEstimateSymmetric} and Theorem \ref{ErrorEstimateNonsymmetric}: Translating corrector bounds into error estimates}

As Theorem \ref{ErrorEstimateSymmetric} is basically just a special case of Theorem~\ref{ErrorEstimateNonsymmetric}, the reason for $a_{\shom,ijk}^1$ vanishing in the case of symmetric coefficient fields having already been discussed in the introduction, we only need to prove Theorem~\ref{ErrorEstimateNonsymmetric}.

We recall the following estimate on stochastic moments of $r_\ast$ and the corrector $\phi$, $\sigma$ from \cite{GNO4}.
\begin{theorem}[Consequence of Theorem 1 and 3 in \cite{GNO4}]
\label{PhiStarBound}
Let $\langle\cdot\rangle$ be a stationary, bounded, and uniformly elliptic ensemble of coefficient fields on $\mathbb{R}^d$, $d\geq 3$, that satisfies a coarsened logarithmic Sobolev inequality of the form \eqref{LSI} with $\varepsilon:=1$ and exponent $\beta$. Suppose that we have $0\leq \beta<1-2/d$.

Then for any $\delta>0$ there exists a constant $C(d,\lambda,\beta)$ such that
\begin{align*}
r_\ast := \inf\bigg\{r\geq 1: \forall R\geq r \quad \frac{1}{R^2} \fint_{\{|x|\leq R\}} \Big|(\phi,\sigma)-\fint_{\{|x|\leq R\}} (\phi,\sigma) \Big|^2 \,dx \leq \delta
\bigg\}
\end{align*}
has stretched exponential moments in the sense that
\begin{align*}
\mathbb{E}\Big[\exp\big(r_\ast^{d(1-\beta)}/C(d,\lambda,\beta)\big)\Big]
\leq C(d,\lambda,\beta).
\end{align*}

Furthermore, the corrector $(\phi,\sigma)$ satisfies
\begin{align}
\label{EstimateOnCorrector}
\sup_{r\geq 1} \left(\fint_{\{|x|\leq r\}} |\phi|^2+|\sigma|^2 \,dx\right)^{1/2}
\leq
\mathcal{C}(a)
\end{align}
for a random constant $\mathcal{C}(a)$ that has stretched exponential moments in the sense
\begin{align*}
\mathbb{E}\Big[\exp\big(\mathcal{C}(a)^{1/C(d,\lambda,\beta)}\big)\Big]
\leq 2.
\end{align*}
\end{theorem}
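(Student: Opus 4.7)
The plan is to extract both statements from the quantitative corrector theory of [GNO4]; each claim reduces almost directly to one of the results cited in the title of the theorem. For the stretched exponential moment bound on $r_\ast$, I would first verify that the definition given here agrees (up to a universal constant) with the dyadic-scale version in \eqref{g.6}. The only two nominal differences are (i) the restriction to radii $R=2^m$ in \eqref{g.6}, which at worst doubles the resulting random radius by monotonicity of the oscillation in $R$, and (ii) the maximum over $a$ and $a^\ast$, which is handled by applying the same argument to the transposed ensemble (whose LSI constants are identical). Once the definitions are aligned, Theorem 1 of [GNO4] provides the stretched exponential moment bound with exponent $d(1-\beta)$ directly.

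For the corrector bound \eqref{EstimateOnCorrector}, I would first recall that in the regime $d\geq 3$, $\beta<1-2/d$, i.e.\ $d(1-\beta)/2>1$, the corrector $(\phi,\sigma)$ can be constructed as a stationary random field with $\mathbb{E}[\phi]=0$ and $\mathbb{E}[|\phi|^2+|\sigma|^2]<\infty$, which is the content of Theorem 3 of [GNO4]. Stationarity then gives $\mathbb{E}\!\left[\fint_{\{|x|\leq r\}} |\phi|^2+|\sigma|^2\,dx\right]=\mathbb{E}[|\phi(0)|^2+|\sigma(0)|^2]<\infty$, uniformly in $r\geq 1$. To upgrade this first-moment control to a stretched exponential bound, I would apply the $L^p$ logarithmic Sobolev inequality (Lemma \ref{LpLSI}) to the random variable $\xi_r:=\bigl(\fint_{\{|x|\leq r\}}|\phi|^2+|\sigma|^2\,dx\bigr)^{1/2}$. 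The sensitivity $\partial(\phi,\sigma)/\partial a$ admits a Green's-function representation, and the large-scale $C^{0,1}$ regularity theory for $-\nabla\cdot a\nabla$ above scale $r_\ast$ (combined with deterministic bounds below $r_\ast$) yields moment bounds of the form $\mathbb{E}[|\xi_r-\mathbb{E}\xi_r|^{2q}]^{1/q}\leq C\, q^{C}$, which is equivalent to stretched exponential concentration of $\xi_r$ for every fixed $r$.

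The remaining step is to pass from pointwise-in-$r$ moment bounds to a bound on the supremum. Since $\xi_r\leq 2^{d/2}\xi_{2^{k+1}}$ for $r\in[2^k,2^{k+1}]$, it is enough to control $\sup_{k\geq 0}\xi_{2^k}$, which is handled by a Markov-type union bound combined with the stretched exponential tail at each dyadic scale, yielding $\mathbb{E}[\exp(\mathcal{C}(a)^{1/C})]\leq 2$. The main technical obstacle is making the sensitivity bound uniform in $r$ with tails integrable against a stretched exponential in $r$; this is exactly where the bound on $r_\ast$ from the first part of the theorem feeds back in, and where the assumption $\beta<1-2/d$ is needed to ensure that the weights $\omega_\alpha$ appearing in the weighted Meyers estimate (Lemma \ref{WeightedMeyers}) are integrable at infinity with the exponents dictated by Proposition \ref{prop1}.
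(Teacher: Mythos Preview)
Your treatment of $r_\ast$ is fine and matches the paper's intent: aligning the dyadic and continuous definitions up to a constant and invoking Theorem~1 of \cite{GNO4} is exactly the route.

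For the corrector bound \eqref{EstimateOnCorrector}, however, your passage to the supremum has a genuine gap. A Markov/union bound over the countably many dyadic scales $k\geq 0$ requires summability in $k$ of the tail probabilities $\mathbb{P}(\xi_{2^k}>s)$; the uniform-in-$k$ stretched exponential tails that your sensitivity argument yields give an identically bounded term at every scale, and the sum diverges. Your closing sentence about ``tails integrable against a stretched exponential in $r$'' acknowledges the right requirement, but nothing in your outline produces decay of the tails of $\xi_r$ as $r\to\infty$; obtaining that along your route would amount to a quantitative fluctuation estimate for $\xi_r$, which is a substantial additional step you have not supplied. (As a side remark, Lemma~\ref{WeightedMeyers} and Proposition~\ref{prop1} are set up for the \emph{second}-order corrector and are not the natural inputs here.)

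The paper bypasses this entirely and does not re-run any sensitivity argument. It quotes \cite[Theorem~3, (50)]{GNO4} only at the level of \emph{unit} balls, obtaining a stationary random field $\mathcal{C}(a,y)$ with stretched exponential moments such that $\bigl(\fint_{\{|x-y|\leq 1\}}|\phi|^2+|\sigma|^2\,dx\bigr)^{1/2}\leq \mathcal{C}(a,y)$. Covering $B_r$ by unit balls gives $\xi_r^2\lesssim \fint_{\{|x|\leq r\}}\mathcal{C}(a,x)^2\,dx$, and the supremum over $r$ is then controlled not by a union bound but by the maximal ergodic theorem
\[
\mathbb{E}\Big[\sup_{r\geq 1}\fint_{\{|x|\leq r\}}|\mathcal{C}(a,x)|^p\,dx\Big]\leq C\,\mathbb{E}\big[|\mathcal{C}(a,0)|^{2p}\big]^{1/2},
\]
which bounds every $p$-th moment of $\sup_{r\geq 1}\xi_r$ by the $2p$-th moment of $\mathcal{C}(a,0)$ and therefore preserves stretched exponential integrability via the characterization $\mathbb{E}[|\xi|^p]^{1/p}\lesssim p^m$. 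This is both shorter and avoids the summability obstruction in your plan.
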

To see that one may indeed take the supremum with respect to $r$ in the bound \eqref{EstimateOnCorrector}, one combines the original estimate from \cite[Theorem 3, estimate (50)]{GNO4}
\begin{align*}
\left(\fint_{\{|x-y|\leq 1\}} |\phi|^2+|\sigma|^2 \,dx\right)^{1/2}
\leq
\mathcal{C}(a,y)
\end{align*}
for some stationary random field $\mathcal{C}(a,x)$ having stretched exponential moments with the maximal ergodic theorem
\begin{align*}
\mathbb{E} \left[\sup_{r\geq 1} \fint_{\{|x|\leq r\}} |\mathcal{C}(a,x)|^p \,dx\right]
\leq C \mathbb{E} \left[|\mathcal{C}(a,x)|^{2p}\right]^{1/2}
\end{align*}
which holds for any stationary random field $\mathcal{C}$; furthermore, one uses the characterization that a random variable $\xi$ has stretched exponential moments if and only if there exists $m\in \mathbb{N}$ such that $\mathbb{E}[|\xi|^p]^{1/p} \leq p^m$ holds for all $p$ large enough.

\begin{proof}[Proof of Theorem~\ref{ErrorEstimateNonsymmetric}]

Having assumed that the ensemble satisfies a LSI with correlation length $\varepsilon$, by the rescaling
\begin{align*}
x\mapsto \tilde x:=\frac{x}{\varepsilon}
\end{align*}
we arrive at the setting of Theorem~\ref{SecondCorrectorEstimate}. Note that the first-order correctors $\phi$, $\sigma$ scale according to $\phi(x)=\varepsilon \tilde \phi(\tilde x)$ and $\sigma(x)=\varepsilon \tilde \sigma(\tilde x)$, where $\tilde \phi$ and $\tilde \sigma$ denote the corrector in microscopic coordinates; correspondingly, the second-order correctors $\psi$, $\Psi$ scale according to $\psi(x)=\varepsilon^2 \tilde \psi(\tilde x)$ and $\Psi(x)=\varepsilon^2 \tilde \Psi(\tilde x)$. By the choice of the scaling in \eqref{Defa1ijk}, the coefficient $a_{\shom,ijk}^1$ is invariant under rescaling.

Let us first establish the bound $|a_{\shom,ijk}^1|\leq C(d,\lambda,\beta)$. Taking a look at definition \eqref{Defa1ijk}, we notice that the bound is an immediate consequence of $\mathbb{E}[|\nabla \psi_{ij}|^2] \leq C(d,\lambda,\beta)\varepsilon^2$ and $\mathbb{E}[|\phi|^2+|\sigma|^2] \leq C(d,\lambda,\beta)\varepsilon^2$. Taking into account the scaling of $\phi$, $\sigma$, and $\nabla \psi_{ij}$, we infer these bounds from Proposition~\ref{proppsimoment} and Theorem~\ref{PhiStarBound} (which are stated for $\varepsilon=1$).

For the proof of the error estimates, we restrict our attention to the case $d(1-\beta)/2<2$; the other cases $d(1-\beta)/2=2$ and $d(1-\beta)/2>2$ are analogous.

We infer from Theorem~\ref{SecondCorrectorEstimate} and our rescaling
\begin{align}
\label{BoundPsi2m}
\Bigg(\fint_{\{|x|\leq 2^m \}}& \left|\psi-\fint_{\{|x|\leq 2^m \}}\psi\right|^2+\left|\Psi-\fint_{\{|x|\leq 2^m \}}\Psi\right|^2 \,dx \Bigg)^{1/2}
\\&\nonumber
\leq
\begin{cases}
\mathcal{C}_m(a) 2^{m(2-d(1-\beta)/2)} \varepsilon^{d(1-\beta)/2}
&\text{for }d(1-\beta)/2<2,
\\
\mathcal{C}_m(a) \varepsilon^{2} \sqrt{m-\log \varepsilon}
&\text{for }d(1-\beta)/2=2,
\\
\mathcal{C}_m(a) \varepsilon^{2}
&\text{for }d(1-\beta)/2>2,
\end{cases}
\end{align}
for all $m\in \mathbb{N}$ with the random constant $\mathcal{C}_m(a)$ satisfying
\begin{align*}
\mathbb{E}[\exp(\mathcal{C}_m(a)^{1/C(d,\lambda,\beta)})] \leq 2
\end{align*}
for some constant $C(d,\lambda,\beta)$.
Similarly, we obtain from Theorem~\ref{PhiStarBound}
\begin{align}
\label{BoundPhi2m}
\left(\fint_{\{|x|\leq 2^m \}} |\phi|^2+|\sigma|^2 \,dx \right)^{1/2}
\leq \hat{\bar{\mathcal{C}}}(a) \varepsilon
\end{align}
for all $m\in \mathbb{N}$ with some random constant $\hat{\bar{\mathcal{C}}}(a)$ satisfying
\begin{align*}
\mathbb{E}[\exp(\hat{\bar{\mathcal{C}}}(a)^{1/C(d,\lambda,\beta)})] \leq 2.
\end{align*}

Note that we may actually replace the averages $\fint_{\{|x|\leq 2^m \}}\psi$  in the estimate \eqref{BoundPsi2m} by a single value depending only on $a$ but not on $m$ (at the expense of replacing the constants $\mathcal{C}_m(a)$ by larger constants which also have stretched exponential moments; the same also applies to the averages $\fint_{\{|x|\leq 2^m \}}\Psi$): For $d(1-\beta)/2<2$, one may replace $\fint_{\{|x|\leq 2^m \}}\psi$ by $\fint_{\{|x|\leq 2^1\}} \psi$ for all $m$ and estimate the error for $m>1$ as 
\begin{align*}
\left|\fint_{\{|x|\leq 2^m\}} \psi-\fint_{\{|x|\leq 2^1\}} \psi\right|
&\leq
\sum_{n=2}^m \left|\fint_{\{|x|\leq 2^n \}}\psi-\fint_{\{|x|\leq 2^{n-1}\}}\psi\right|
\\&
\stackrel{\eqref{BoundPsi2m}}{\leq}  2^{m(2-d(1-\beta)/2)} \varepsilon^{d(1-\beta)/2} 
\sum_{n=2}^m C(d) \mathcal{C}_n(a) 2^{(n-m)(2-d(1-\beta)/2)},
\end{align*}
where the random constant arising from the latter sum certainly satisfies a bound analogous to the one of $\mathcal{C}_m(a)$. In contrast, for $d(1-\beta)/2>2$, one may either use an argument based on a similar telescoping sum and Proposition~\ref{MomentBoundsCorrectorAverages} to replace $\fint_{\{|x|\leq 2^m\}} \psi$ by $\lim_{n\rightarrow \infty} \fint_{\{|x|\leq 2^n\}} \psi$, thereby obtaining a bound analogous to \eqref{BoundPsi2m}, or use the same argument as before at the expense of incurring an additional factor of $\log 2^m=m$, which is easily canceled for the purpose of the remainder of the proof by the decay properties of $\nabla^3 u_{\shom}$, see below. The latter alternative of incurring a factor of $m$ also applies to the critical case $d(1-\beta)/2=2$.

As the equations \eqref{Equationpsi} and \eqref{EquationPsi} are invariant with respect to subtraction of constants from $\psi$ and $\Psi$, for the remainder of the proof we shall assume that for $\big(\fint_{\{|x|\leq 2^m \}} |\psi|^2 + |\Psi|^2 \,dx\big)^{1/2}$ a bound like \eqref{BoundPsi2m} holds.

By \eqref{ErrorTwoScaleSecond}, for a solution $u_{\shom}$ of the equation $-\nabla \cdot (a_{\shom}\nabla u_{\shom})=f$ and a solution $u$ of the equation $-\nabla \cdot (a\nabla u)=f$, the error in the second-order two-scale expansion satisfies
\begin{align*}
&-\nabla \cdot \Big(a\nabla \Big(u-(u_{\shom}+\varepsilon u_{\shom}^1)-\sum_i \phi_i (\partial_i u_{\shom}+\varepsilon \partial_i u_{\shom}^1) -\sum_{i,j} \psi_{ij} \partial_i\partial_j u_{\shom} \Big)\Big)
\\&
=\nabla \cdot \Big(\sum_{i,j} (\psi_{ij}a-\Psi_{ij}) \nabla \partial_i \partial_j u_{\shom}\Big)
+\nabla \cdot \Big(\sum_{i} \varepsilon (\phi_i a-\sigma_i) \nabla \partial_i u_{\shom}^1 \Big).
\end{align*}
The standard energy estimate for the elliptic equation satisfied by the error in the two-scale expansion, obtained by testing the equation with $u-(u_{\shom}+\varepsilon u_{\shom}^1)-\sum_i \phi_i (\partial_i u_{\shom}+\varepsilon \partial_i u_{\shom}^1) -\sum_{i,j} \psi_{ij} \partial_i\partial_j u_{\shom}$ and using ellipticity and boundedness of $a$, implies
\begin{align*}
&\left|\left|
\nabla u- \nabla \Big((u_{\shom}+\varepsilon u_{\shom}^1)+\sum_i \phi_i (\partial_i u_{\shom}+\varepsilon \partial_i u_{\shom}^1)+\sum_{i,j} \psi_{ij} \partial_i\partial_j u_{\shom}\Big)
\right|\right|_{L^2(\mathbb{R}^d)}
\\&
\leq C(d,\lambda) \big|\big|(|\psi|+|\Psi|)\nabla^3 u_{\shom}\big|\big|_{L^2(\mathbb{R}^d)}
+ C(d,\lambda) \big|\big|\varepsilon (|\phi|+|\sigma|)\nabla^2 u_{\shom}^1\big|\big|_{L^2(\mathbb{R}^d)}
\\&
\leq C(d,\lambda) \sum_{m=1}^\infty ||\nabla^3 u_{\shom}||_{L^\infty(\{ 2^{m-1}-1\leq |x|\leq 2^m\})}
\big(||\psi||_{L^2(\{|x|\leq 2^m\})}+||\Psi||_{L^2(\{|x|\leq 2^m\})}\big)
\\&\ \ \ 
+C(d,\lambda) \varepsilon \sum_{m=1}^\infty ||\nabla^2 u_{\shom}^1||_{L^\infty(\{ 2^{m-1}-1\leq |x|\leq 2^m\})}
\big(||\phi||_{L^2(\{|x|\leq 2^m\})}+||\sigma||_{L^2(\{|x|\leq 2^m\})}\big).
\end{align*}
Having assumed that $f$ is supported in $\{|x|\leq 1\}$, the solution $u_{\shom}$ to the constant-coefficient equation $-\nabla \cdot (a_{\shom}\nabla u_{\shom}) = f$ decays like $|x|^{2-d}$ for $|x|\in \mathbb{R}^d \setminus \{|x|\leq 2\}$. By regularity of constant-coefficient elliptic equations, its third derivative therefore decays like $|x|^{-1-d}$. In total, we deduce for all $x\in \mathbb{R}^d$
\begin{align*}
|\nabla^3 u_{\shom}(x)| \leq C(d,\lambda) |x|^{-d-1} ||\nabla^3 u_{\shom}||_{L^\infty}.
\end{align*}
A similar bound (with $|x|^{-d-2}$) holds for the fourth derivative.
From \eqref{EffectiveEquationFirst} and the bound on $a_{\shom}^1$, we therefore infer for all $x\in \mathbb{R}^d$ (see for example \cite[estimate (85)]{BellaGiuntiOtto})
\begin{align*}
|\nabla^2 u_{\shom}^1(x)| \leq C(d,\lambda,\beta) |x|^{-d-1} (||\nabla^2 u_{\shom}^1||_{L^\infty}+||\nabla^3 u_{\shom}||_{L^\infty}).
\end{align*}
Plugging in these estimates and the bounds \eqref{BoundPsi2m} and \eqref{BoundPhi2m} in the previous estimate for the error of the two-scale expansion, we deduce
\begin{align}
\nonumber
&\left|\left|
\nabla u- \nabla \Big(u_{\shom}+\varepsilon u_{\shom}^1+\sum_i \phi_i (\partial_i u_{\shom}+\varepsilon \partial_i u_{\shom}^1)+\sum_{i,j} \psi_{ij} \partial_i\partial_j u_{\shom}\Big)
\right|\right|_{L^2(\mathbb{R}^d)}
\\&
\label{TwoScaleErrorFirst}
\leq C(d,\lambda)\varepsilon^{d(1-\beta)/2} ||\nabla^3 u_{\shom}||_{L^\infty} \sum_{m=1}^\infty \mathcal{C}_m(a) 2^{m(1-d/2-d(1-\beta)/2)}
\\&\ \ \ \nonumber
+C(d,\lambda,\beta)\hat{\bar{\mathcal{C}}}(a) \varepsilon^2
(||\nabla^2 u_{\shom}^1||_{L^\infty}+||\nabla^3 u_{\shom}||_{L^\infty})
\sum_{m=1}^\infty 2^{-m(1+d/2)}.
\end{align}
Notice that the infinite sum $\sum_{m=1}^\infty \mathcal{C}_m(a) 2^{m(1-d/2-d(1-\beta)/2)}$ yields a random constant $\tilde{\mathcal{C}}(a)$ having stretched exponential moments in the sense
\begin{align*}
\mathbb{E}[\exp(\tilde{\mathcal{C}}(a)^{1/C(d,\lambda,\beta)})] \leq 2.
\end{align*}
This provides the desired error estimate at the level of the gradient for the second-order two-scale expansion.

By the Sobolev embedding $H^1(\mathbb{R}^d)\rightarrow L^{2^\ast}(\mathbb{R}^d)$ and the estimates \eqref{BoundPsi2m} and \eqref{BoundPhi2m} on $||\psi||_{L^2(B_R)}$ and $||\phi||_{L^2(B_R)}$, we now obtain from \eqref{TwoScaleErrorFirst}
\begin{align*}
&\left|\left|u-\Big(u_{\shom}+\varepsilon u_{\shom}^1+\sum_i \phi_i \partial_i u_{\shom}\Big)
\right|\right|_{L^2(\{|x|\leq 1\})}
\\&
\leq
C \left|\left|u-\Big(u_{\shom}+\varepsilon u_{\shom}^1+\sum_i \phi_i (\partial_i u_{\shom}+\varepsilon \partial_i u_{\shom}^1)+\sum_{i,j} \psi_{ij} \partial_i\partial_j u_{\shom}\Big)
\right|\right|_{L^{2^\ast}(\{|x|\leq 1\})}
\\&\ \ \ 
+\left|\left|
\sum_{i,j} \psi_{ij} \partial_i\partial_j u_{\shom}
\right|\right|_{L^2(\{|x|\leq 1\})}
+\left|\left|
\varepsilon \sum_i \phi_i \partial_i u_{\shom}^1
\right|\right|_{L^2(\{|x|\leq 1\})}
\\&
\leq C(d,\lambda) (||\nabla^3 u_{\shom}||_{L^\infty}+||\nabla^2 u_{\shom}||_{L^\infty(\{|x|\leq 1\})}) \tilde{\mathcal{C}}(a) \varepsilon^{d(1-\beta)/2}
\\&\ \ \ 
+C(d,\lambda,\beta) \big(||\nabla^2 u_{\shom}^1||_{L^\infty}+||\nabla^3 u_{\shom}||_{L^\infty}
+||\nabla u_{\shom}^1||_{L^\infty(\{|x|\leq 1\})}\big)\hat{\bar{\mathcal{C}}}(a) \varepsilon^2.
\end{align*}
Furthermore, going back to microscopic coordinates $\tilde x:=x/\varepsilon$, Lemma \ref{EstimateCorrectorH-1} provides for all $r\geq 1$ a bound of the form
\begin{align*}
r^{-d/2} ||\tilde \phi_i||_{H^{-1}(\{|\tilde x|\leq r\})} \leq
\begin{cases}
\hat{\mathcal{C}}_r(a) r^{2-d(1-\beta)/2}&\text{for }d(1-\beta)/2<2,
\\
\hat{\mathcal{C}}_r(a) \log (r+2)&\text{for }d(1-\beta)/2=2,
\\
\hat{\mathcal{C}}_r(a) &\text{for }d(1-\beta)/2>2,
\end{cases}
\end{align*}
with a random constant $\hat{\mathcal{C}}_r(a)$ subject to an estimate of the form
\begin{align*}
\mathbb{E}\big[\exp(\hat{\mathcal{C}}_r(a)^{1/C(d,\lambda,\beta)})\big]\leq 2.
\end{align*}
Reverting the change of variables, this implies in our setting
\begin{align*}
||\phi_i||_{H^{-1}(\{|x|\leq 1\})} \leq \hat{\mathcal{C}}(a) \varepsilon^{d(1-\beta)/2}.
\end{align*}

The embedding $L^2(\{|x|\leq 1\})\hookrightarrow H^{-1}(\{|x|\leq 1\})$ and the estimate
\begin{align*}
||\phi_i \partial_i u_{\shom}||_{H^{-1}(\{|x|\leq 1\})}\leq C(d) ||\phi_i||_{H^{-1}(\{|x|\leq 1\})} (||\nabla u_{\shom}||_{L^\infty(\{|x|\leq 1\})}+||\nabla^2 u_{\shom}||_{L^\infty(\{|x|\leq 1\})})
\end{align*}
then yield a bound of the form
\begin{align*}
&||u-(u_{\shom}+\varepsilon u_{\shom}^1)||_{H^{-1}(\{|x|\leq 1\})}
\\&
\leq C(d,\lambda) (\tilde{\mathcal{C}}(a)+\hat{\mathcal{C}}(a))
\big(||\nabla^3 u_{\shom}||_{L^\infty}+||\nabla^2 u_{\shom}||_{L^\infty(\{|x|\leq 1\})}
+||\nabla u_{\shom}||_{L^\infty(\{|x|\leq 1\})}\big) \varepsilon^{d(1-\beta)/2}
\\&
+ C(d,\lambda,\beta) \hat{\bar{\mathcal{C}}}(a)
\big(||\nabla^2 u_{\shom}^1||_{L^\infty}+||\nabla^3 u_{\shom}||_{L^\infty}+||\nabla u_{\shom}^1||_{L^\infty(\{|x|\leq 1\})}\big) \varepsilon^2.
\end{align*}
By the decay estimates for $\nabla^3 u_{\shom}$ and $\nabla^2 u_{\shom}$ outside of $\{|x|\leq 2\}$, we may estimate the norms $||\nabla^2 u_{\shom}||_{L^\infty(\{|x|\leq 1\})}$ and $||\nabla u_{\shom}||_{L^\infty(\{|x|\leq 1\})}$ in terms of $||\nabla^3 u_{\shom}||_{L^\infty}$; similarly, we may estimate $||\nabla u_{\shom}^1||_{L^\infty(\{|x|\leq 1\})}$ in terms of $||\nabla^2 u_{\shom}^1||_{L^\infty}$. This gives the desired estimate.
\end{proof}
In the proof of Theorem \ref{ErrorEstimateNonsymmetric}, we have used the estimate on the first-order corrector from Lemma \ref{EstimateCorrectorH-1}.

\begin{proof}[Proof of Lemma \ref{EstimateCorrectorH-1}]
Recall that a random variable $\xi$ has stretched exponential moments if and only if there exists $m\in \mathbb{N}$ such that $\mathbb{E}[|\xi|^p]^{1/p} \leq p^m$ holds for all $p$ large enough.

Using the notation of Lemma \ref{IntegrateLemma}, the result \cite[Theorem 2]{GNO4} provides the estimate
\begin{align}
\label{BoundCorrectorAverages}
\mathbb{E}\left[\left|\int \nabla (\phi,\sigma) \cdot g \right|^m\right]^{1/m}
\leq C(d,\lambda,\beta) m^{C(d,\lambda,\beta)} r^{-d(1-\beta)/2}
\end{align}
for the first-order correctors $\phi$ and $\sigma$ for all $m\geq 2$, all $r\geq 1$, all $x_0\in \mathbb{R}^d$, and all $g$ supported in $\{|x-x_0|\leq r\}$ with $(\fint_{\{|x-x_0|\leq r\}} |g|^{2+1/d} \,dx)^{1/(2+1/d)} \leq r^{-d}$. Correspondingly, we infer from Theorem~\ref{PhiStarBound} and the Caccioppoli inequality for $\phi$
\begin{align*}
\mathbb{E}\left[\left(\fint_{\{|x-x_0|\leq 1\}} |\nabla \phi|^2 \,dx\right)^{m/2}\right]^{1/m}
\leq C(d,\lambda,\beta) m^{C(d,\lambda,\beta)}
\quad\quad\text{for all }x_0\in \mathbb{R}^d
\end{align*}
for all $m\geq 2$ and (due to stationarity) all $x_0\in \mathbb{R}^d$.

Using these bounds as an input to Lemma \ref{IntegrateLemma}, we infer a bound of the form
\begin{align*}
\mathbb{E}\left[\left(r^{-d/2}\left|\left|\phi-\fint_{\{|x|\leq r\}}\phi\right|\right|_{H^{-1}(\{|x|\leq r\})} \right)^{m} \right]^{1/m}
\leq
\begin{cases}
C m^{C(d,\lambda,\beta)} r^{2-d(1-\beta)/2} &\text{for }d(1-\beta)/2<2,
\\
C m^{C(d,\lambda,\beta)} \log(r+2) &\text{for }d(1-\beta)/2=2,
\\
C m^{C(d,\lambda,\beta)} &\text{for }d(1-\beta)/2>2.
\end{cases}
\end{align*}
It only remains to drop the average $\fint_{\{|x|\leq r\}}\phi$. This is possible by exploiting the bound \eqref{BoundCorrectorAverages} on a sequence of dyadic balls to estimate
\begin{align*}
\left|\fint_{\{|x|\leq r\}}\phi\right|
&\leq \sum_{N=1}^\infty \left|\fint_{\{|x|\leq 2^{N-1} r\}}\phi-\fint_{\{|x|\leq 2^N r\}}\phi\right|
+\lim_{N\rightarrow \infty} \left|\fint_{\{|x|\leq 2^N r\}}\phi\right|.
\end{align*}
Note that $\fint_{\{|x|\leq 2^N r\}}\phi\rightarrow 0$ for $N\rightarrow \infty$ almost surely by ergodicity and our convention $\mathbb{E}[\phi]=0$. Therefore one obtains by \eqref{BoundCorrectorAverages}
\begin{align*}
\mathbb{E}\left[\left|\fint_{\{|x|\leq r\}}\phi\right|^m\right]^{1/m}
&\leq \sum_{N=1}^\infty \mathbb{E}\left[\left|\fint_{\{|x|\leq 2^{N-1} r\}}\phi-\fint_{\{|x|\leq 2^N r\}}\phi\right|^m\right]^{1/m}
\\&
\leq \sum_{N=1}^\infty C(d,\lambda,\beta) m^{C(d,\lambda,\beta)} (r 2^N)^{1-d(1-\beta)/2}
\leq C(d,\lambda,\beta) m^{C(d,\lambda,\beta)} r^{1-d(1-\beta)/2}.
\end{align*}
The precise details of the argument for dropping the average can be found in \cite[Lemma~2]{BellaFehrmanOtto}.
\end{proof}

\section{Proof of Proposition~\ref{prop1}: Estimate on the sensitivity of the second-order corrector}

\begin{proof}[Proof of Proposition~\ref{prop1}]
We decompose $g = (\tilde g, \bar g)$ so that 
\begin{equation*}
 F\nabla(\psi,\Psi) = \int \nabla \psi \cdot \tilde g + \int \nabla \Psi \cdot \bar g.
\end{equation*}
As in~\cite{GNO4} (see the discussion on page 13 there), in what follows we only consider ``well-localized' coefficient fields $a \in \Omega'$. Since $\Omega' \subset \Omega$ has full measure, w.l.o.g. we will omit $'$ in the notation. We split the proof into five steps.

\medskip

\step{1} Duality argument for $\psi$.\\
We denote by $(\tilde v_0, \tilde v_1, \tilde v_2, \tilde v_3)$ the unique quadruple of decaying solutions to
\begin{equation}\label{e1.2}
\begin{aligned}
  -\nabla \cdot (a^*\nabla \tilde v_0)&=\nabla \cdot \tilde g,
  \\
  -\nabla \cdot (a^*\nabla \tilde v_{1j})&= \nabla \tilde v_0 \cdot a e_j,
  \\
  -\Delta \tilde v_{2k} &= \partial_k \tilde v_0,
  \\
  -\nabla \cdot (a^* \nabla \tilde v_{3j}) &= - \nabla \cdot \Big(\sum_{k} \partial_j \tilde v_{2k} a^* e_k\Big) + \nabla \cdot \Big(\sum_{k} \partial_k \tilde v_{2k} a^* e_j\Big).
\end{aligned}
 \end{equation}
 for $j,k\in \{1,\ldots,d\}$.
For all $a \in \Omega'$ we denote by $\delta \phi, \delta \sigma, \delta \psi$ the (decaying) solutions of
\begin{subequations}
\begin{align}
\label{e1.1}
-\nabla\cdot (a\nabla \delta \phi_i)&=\nabla\cdot (\delta a (\nabla\phi_i+e_i))
\\
\label{e1.1b}
-\Delta \delta\sigma_{ijk}
&=\nabla\cdot \big[ (\delta a(\nabla\phi_i+e_i))_k e_j +(a \nabla \delta \phi_i)_k e_j
\\&\quad\quad\quad\nonumber
- (\delta a(\nabla\phi_i+e_i))_j e_k-(a\nabla \delta \phi_i)_je_k\big],
\\
\label{e1.1c}
-\nabla\cdot (a\nabla \delta \psi_{ij})&=\nabla\cdot [ \delta a \nabla \psi_{ij} + (\delta a\, \phi_i + a \, \delta \phi_i - \delta\sigma_i) e_j].
\end{align}
\end{subequations}
We define $\tilde F\nabla \psi:=\int \tilde g \cdot \nabla \psi_{ij}$, and use the shorthand notation $\delta \tilde F:=\int \tilde g\cdot \nabla \delta \psi_{ij}$. 
We shall prove that
\begin{equation}\label{e1-1}
\bigg|\bigg|\frac{\partial \tilde F\nabla \psi}{\partial a}\bigg|\bigg|_{CC}^2
\leq \sum_D \bigg(\int_D |\nabla \tilde v_0| (|\nabla \psi| + |\phi|) + \sum_{m=1}^3 |\nabla \tilde v_{m}| |\nabla \phi + e|) \bigg)^2.
\end{equation}
Indeed, by definition \eqref{e1.2} of $\tilde v_0$,
  \begin{equation}\label{e1.3}
\begin{aligned}
    \delta \tilde F & = \int \tilde g\cdot \nabla \delta \psi_{ij}
=-\int \nabla \tilde v_0 \cdot a \nabla \delta \psi_{ij}
\\
&= \int \nabla \tilde v_0 \cdot ( \delta a\, \nabla \psi_{ij} + \delta a\,\phi_i e_j ) + 
\int \nabla \tilde v_0 \cdot \delta \phi_i \, a e_j 
- \int \nabla \tilde v_0 \cdot \, \delta\sigma_i e_j , 
\end{aligned}
  \end{equation}
where the last relation follows from~\eqref{e1.1c}. Using~\eqref{e1.2} we rewrite the last two integrals in the following way:
\begin{align*}
 \int \nabla \tilde v_0 \cdot \delta \phi_i\, a e_j &= \int \nabla \tilde v_{1j} \cdot a \nabla \delta \phi_i \overset{\eqref{e1.1}}{=} -\int \nabla \tilde v_{1j} \cdot \delta a (\nabla \phi_i + e_i),
\end{align*}
and
\begin{align*}
- \int \nabla \tilde v_0 \cdot \delta\sigma_i e_j
&= -\int \sum_{k} \partial_k \tilde v_0 \, \delta \sigma_{ikj}
= -\int \sum_k \nabla \tilde v_{2k} \cdot \nabla \delta \sigma_{ikj}
\\
&\overset{\eqref{e1.1b}}{=} \sum_k \int 
\partial_j \tilde v_{2k} e_k\cdot (\delta a(\nabla \phi_i + e_i) + a\nabla \delta \phi_i)
- \partial_k \tilde v_{2k} e_j \cdot (\delta a(\nabla \phi_i + e_i) + a\nabla \delta \phi_i)
\\
&\overset{\eqref{e1.2}}{=} \int 
\sum_k (\partial_j \tilde v_{2k}e_k - \partial_k \tilde v_{2k}e_j) \cdot \delta a (\nabla \phi_i + e_i) 
+ \nabla \tilde v_{3j} \cdot a \nabla \delta \phi_i
\\
&\overset{\eqref{e1.1}}{=} \int 
\sum_k (\partial_j \tilde v_{2k}e_k - \partial_k \tilde v_{2k}e_j) \cdot \delta a (\nabla \phi_i + e_i) + \nabla \tilde v_{3j} \cdot \delta a (\nabla \phi_i + e_i).
\end{align*}
Plugging this into~\eqref{e1.3} yields
\begin{equation}\nonumber
 \begin{aligned}
    \delta \tilde F &= \int \nabla \tilde v_0 \cdot \delta a (\nabla \psi_{ij} + \phi_i e_j) + (-\nabla \tilde v_{1i} + \sum_k \partial_j \tilde v_{2k}e_k - \sum_k \partial_k \tilde v_{2k}e_j + \nabla \tilde v_{3i}) \cdot \delta a (\nabla \phi_i + e_i).
\end{aligned}
\end{equation}
Finally, using Definition~\ref{Definition1}, this implies~\eqref{e1-1}.

{
\providecommand{\vv}{\overline{\overline{v}}_0}

\medskip
\step{2} Duality argument for $\Psi$.\\
We consider $\bar F\nabla\Psi:=\int\nabla\Psi_{ijkl}\cdot\bar g$ and use the shorthand notation $\delta \bar F=\bar F\nabla \delta \Psi_{ijkl}=\int \bar g\cdot \nabla \delta \Psi_{ijkl}$. By $(\bar v_0, \ldots, \bar v_4)$ and $\overline{\overline{v}}_0$ 
we denote the decaying solutions of 
\begin{align}
 - \triangle \bar v_0 &= \nabla \cdot \bar g,
\label{e1.5.1}
\\
 - \nabla \cdot (a^* \nabla \bar v_{1ijkl}) &= \partial_k \bar v_0 ae_j \cdot e_l,
\label{e1.5.2}
\\ \nonumber
 - \nabla \cdot (a^* \nabla \overline{\overline{v}}_{0kl}) &= \nabla \cdot (\partial_k \bar v_0 a^* e_l ),
\\ \nonumber
  -\nabla \cdot (a^*\nabla \bar v_{2jkl}) &= \nabla \overline{\overline{v}}_{0kl} \cdot a e_j,
\\ \nonumber
  -\Delta \bar v_{3nkl} &= \partial_n \overline{\overline{v}}_{0kl},
\\ \nonumber
-\nabla \cdot (a^* \nabla \bar v_{4jkl}) &= - \nabla \cdot (\sum_{n} \partial_j \bar v_{3nkl} a^* e_n) + \nabla \cdot (\sum_{n} \partial_n \bar v_{3nkl} a^* e_j). 
\end{align}
We shall show that 
  \begin{equation}\label{e1-2}
    \left|\left|\frac{\partial \bar F\nabla \Psi}{\partial a}\right|\right|_{CC}^2
    \leq \sum_D \bigg(\int_D (|\nabla \bar v_0|+|\nabla \vv|) (|\nabla \psi| + |\phi|) + \sum_{m=1}^4 |\nabla \bar v_{m}| |\nabla \phi + e|) \bigg)^2.
  \end{equation}
By the skew-symmetry of $\sigma_{ijk}$, the definition of $\Psi$ (see \eqref{Gauge}) is equivalent to
\begin{align*}
-\Delta \Psi_{ijkl}
=
\nabla \cdot \big( &\operatorname{sym}_{ijl}[a\nabla \psi_{ij}\cdot e_l + \phi_i ae_j \cdot e_l-\varepsilon a_{\shom,ijl}^1] e_k
\\&
- \operatorname{sym}_{ijk}[a\nabla \psi_{ij}\cdot e_k + \phi_i ae_j \cdot e_k-\varepsilon a_{\shom,ijk}^1] e_l \big),
\end{align*}
which implies that
\begin{align}
\label{EquationdeltaPsi}
-\Delta \delta \Psi_{ijkl} =& 
\nabla \cdot \big(\operatorname{sym}_{ijl}[\delta a\,\nabla \psi_{ij}\cdot e_l + \phi_i \delta a\,e_j \cdot e_l] e_k
- \operatorname{sym}_{ijk}[\delta a\nabla \psi_{ij}\cdot e_k + \phi_i \delta a\,e_j \cdot e_k] e_l \big)
\\&
\nonumber
+\nabla \cdot \big(\operatorname{sym}_{ijl}[a \nabla \delta \psi_{ij}\cdot e_l + \delta \phi_i\, ae_j \cdot e_l] e_k
- \operatorname{sym}_{ijk}[a\nabla \delta \psi_{ij}\cdot e_k + \delta \phi_i \, a\,e_j \cdot e_k] e_l \big).
\end{align}
Denoting by $\operatorname{sym}_{ijk,l}$ the symmetrization with respect to $ijk$ and the skew-symmetrization with respect to $kl$, \eqref{e1.5.1} and \eqref{EquationdeltaPsi} imply
\begin{align}\label{e1.6.1}
 \delta \bar F = 2\operatorname{sym}_{ijk,l} \int \partial_k \bar v_0 (\delta a\,\nabla \psi_{ij}\cdot e_l + \phi_i \delta a\,e_j \cdot e_l+a \nabla \delta \psi_{ij}\cdot e_l + \delta \phi_i\, ae_j \cdot e_l).
\end{align}
We need to treat the terms on the right-hand side which do not include $\delta a$.
First, by~\eqref{e1.5.2} we have
\begin{equation}\label{e1.6.2}
 \int \delta \phi_i \partial_k \bar v_0 \, ae_j\cdot e_l = \int \nabla \bar v_{1ijkl} \cdot a \nabla \delta \phi_i \overset{\eqref{e1.1}}{=} \int \nabla \bar v_{1ijkl} \cdot  \delta a (\nabla \phi_i + e_i).
\end{equation}
To estimate the last remaining term we proceed as in Step 1:
\begin{align*}
\int \partial_k \bar v_0 a\nabla \delta \psi_{ij} \cdot e_l 
&= -\int \nabla \overline{\overline{v}}_{0kl} \cdot a \nabla \delta \psi_{ij}
\\
&= \int \nabla \overline{\overline{v}}_{0kl} \cdot [ \delta a\, \nabla \psi_{ij} + \delta a\,\phi_i e_j ] + 
\int \nabla \overline{\overline{v}}_{0kl} \cdot a \delta \phi_i\, e_j 
- \int \nabla  \overline{\overline{v}}_{0kl} \cdot \delta\sigma_i\, e_j, 
\end{align*}
and further by definition of auxiliary functions $\bar v_2, \bar v_3, \bar v_4$ by the same reasoning as in Step 1:
\begin{align*}
 \int \nabla \overline{\overline{v}}_{0kl} \cdot a \delta \phi_i\, e_j &= \int \nabla \bar v_{2jkl} \cdot a \nabla \delta \phi_i \overset{\eqref{e1.1}}{=} -\int \nabla \bar v_{2jkl} \cdot \delta a (\nabla \phi_i + e_i),
\\
 - \int \nabla \overline{\overline{v}}_{0kl} \cdot \delta\sigma_i\, e_j &=
\int \sum_n (\partial_j \bar v_{3nkl}e_n - \partial_n \bar v_{3nkl}e_j) \cdot \delta a (\nabla \phi_i + e_i) + \nabla \bar v_{4jkl} \cdot \delta a (\nabla \phi_i + e_i).
\end{align*}
Finally, we combine~\eqref{e1.6.1} and \eqref{e1.6.2} with the previous two equations and use Definition~\ref{Definition1} to obtain~\eqref{e1-2}. 
}

\medskip
\step{3} Consequences of the weighted Meyers-type estimate.\\
Using the bounds \eqref{e1.13} and \eqref{e1.13++}, we see that the definition \eqref{e1.2} implies that for any $1<p<\bar p$ ($\bar p$ being the Meyers' exponent), any $\alpha_1$ with $2p\leq \alpha_1<d(2p-1)$, and any $\alpha$ with $2p<\alpha<\alpha_1$ the following estimates hold:
\begin{align*}
\left(\int |\nabla \tilde v_0|^{2p}\omega_{\alpha} \right)^{\frac{1}{2p}}
&\lesssim \left(\int |\tilde g|^{2p}\omega_{\alpha_1}\right)^{\frac{1}{2p}},
\\
\left(\int |\nabla \tilde v_{1j}|^{2p}\omega_{\alpha-2p} \right)^{\frac{1}{2p}}
&\lesssim (r+r_*) \left(\int |\nabla \tilde v_0|^{2p}\omega_{\frac{1}{2}(\alpha+\alpha_1)}\right)^{\frac{1}{2p}}
\lesssim (r+r_*) \left(\int |\tilde g|^{2p}\omega_{\alpha_1}\right)^{\frac{1}{2p}},
\\
\left(\int |\nabla \tilde v_{2jkl}|^{2p}\omega_{\alpha-2p} \right)^{\frac{1}{2p}}
&\lesssim (r+r_*) \left(\int |\nabla \tilde v_0|^{2p}\omega_{\frac{1}{2}(\alpha+\alpha_1)}\right)^{\frac{1}{2p}}
\lesssim (r+r_*) \left(\int |\tilde g|^{2p}\omega_{\alpha_1}\right)^{\frac{1}{2p}},
\\
\left(\int |\nabla \tilde v_{3i}|^{2p}\omega_{\alpha-2p} \right)^{\frac{1}{2p}}
&\lesssim (r+r_*) \left(\int \sum_{j,k} |\nabla \tilde v_{2ijk}|^{2p}\omega_{\frac{1}{2}(\alpha+\alpha_1)-2p}\right)^{\frac{1}{2p}}
\lesssim (r+r_*) \left(\int |\tilde g|^{2p}\omega_{\alpha_1}\right)^{\frac{1}{2p}}.
\end{align*}
Note that by the definition of $r$ and $\omega_{\alpha_1}$, we have $\omega_{\alpha_1}\lesssim 1$ on the support of $g$; thus, we may drop the weights $\omega_{\alpha_1}$ in the integrals on the right. Similarly, we deduce the estimates
\begin{align*}
\left(\int |\nabla \bar v_0|^{2p}\omega_{\alpha} \right)^{\frac{1}{2p}}
&\lesssim \left(\int |\bar g|^{2p}\omega_{\alpha_1}\right)^{\frac{1}{2p}},
\\
\left(\int |\nabla \overline{\overline{v}}_0|^{2p}\omega_{\alpha} \right)^{\frac{1}{2p}}
&\lesssim 
\left(\int |\nabla \bar v_0|^{2p}\omega_{(\alpha+\alpha_1)/2} \right)^{\frac{1}{2p}}
\lesssim \left(\int |\bar g|^{2p}\omega_{\alpha_1}\right)^{\frac{1}{2p}},
\\
\left(\int |\nabla \bar {v}_m|^{2p}\omega_{\alpha-2p} \right)^{\frac{1}{2p}}
&\lesssim (r+r_*) \left(\int |\bar g|^{2p}\omega_{\alpha_1}\right)^{\frac{1}{2p}}
\quad\quad\text{for }m=1,2,3,4.
\end{align*}

\medskip
\step{4} Conclusion.\\
  First we observe that for any exponent $p>1$, any pair of functions $\varphi$ and $v$, and any positive weight function $w(x)$ we have by Cauchy-Schwarz' inequality followed by H\"older's inequality,
\begin{align*}
&\sum_D\left(\int_D|\nabla v||\varphi|\right)^2
\leq
\sum_D \left(|D|^{-\frac{p-1}{p}}\int_D|\nabla v|^2\omega^{\frac1p}\right)\left(|D|^{\frac{p-1}{p}}\int_D|\varphi|^2\omega^{-\frac1p}\right)
\\
&\qquad\leq \left(\sum_D|D|^{-(p-1)}\bigg(\int_D|\nabla v|^2\omega^{\frac1p}\bigg)^p\right)^{\frac1p}\left(\sum_D|D|\bigg(\int_D|\varphi|^2\omega^{-\frac1p}\bigg)^\frac{p}{p-1}\right)^{\frac{p-1}{p}},
\end{align*}
which combined with $\big(\int_D|\nabla v|^2\omega^{\frac1p}\big)^p\leq |D|^{p-1}\int_D|\nabla v|^{2p}\omega$ implies
\begin{align*}
\sum_D\left(\int_D|\nabla v||\varphi|\right)^2
\le \biggl( \int |\nabla v|^{2p}\omega \biggr)^{\frac1p} \biggl( \sum_D|D|(\min_D \omega)^{-\frac{1}{p-1}} \left(\int_D |\varphi|^2 \right)^\frac{p}{p-1}\biggr)^{\frac{p-1}{p}}.
\end{align*}
Let $\bar p$ denote the Meyers' exponent of Lemma~\ref{WeightedMeyers}. From~\eqref{e1-1} we deduce for exponents $1<p<\bar p$, $2p\leq\alpha<d(2p-1)$ and the weights $\omega$ defined by \eqref{DefWeights} the estimate
\begin{align*}
\left|\left|\frac{\partial F\nabla \psi}{\partial a}\right|\right|^2_{CC}
\lesssim&
(r+r_*)^2 \sum_{m=1}^3 \left(\int |\nabla \tilde v_m|^{2p}\omega_{\alpha-2p} \right)^{\frac{1}{p}}
\biggl(\sum_D|D| (\min_D\omega_{\alpha-2p})^{-\frac{1}{p-1}} \biggl( \int_D|\nabla\phi+e|^2 \biggr)^{\frac{p}{p-1}} \biggr)^{\frac{p-1}{p}} 
\\&
+\left(\int |\nabla \tilde v_0|^{2p} \omega_\alpha \right)^{\frac{1}{p}} 
\biggl( \sum_D|D| (\min_D\omega_{\alpha})^{-\frac{1}{p-1}} \biggl( \int_D|\nabla\psi|^2 + |\phi|^2 \biggr)^{\frac{p}{p-1}}\biggr)^{\frac{p-1}{p}},
\\
\lesssim&
(r+r_*)^2 \left(\int|g|^{2p}\right)^{\frac{1}{p}} 
\biggl(\sum_D|D| (\min_D\omega_{\alpha-2p})^{-\frac{1}{p-1}} \biggl( \int_D|\nabla\phi+e|^2 \biggr)^{\frac{p}{p-1}} \biggr)^{\frac{p-1}{p}} 
\\&
+\left(\int|g|^{2p}\right)^{\frac{1}{p}} 
\biggl( \sum_D|D| (\min_D\omega_{\alpha})^{-\frac{1}{p-1}} \biggl( \int_D|\nabla\psi|^2 + |\phi|^2 \biggr)^{\frac{p}{p-1}}\biggr)^{\frac{p-1}{p}},
\end{align*}
which completes the proof for $F\nabla \psi$. The conclusion for $F\nabla \Psi$ is entirely analogous, using \eqref{e1-2} instead of \eqref{e1-1}.
\end{proof}

\begin{proof}[Proof of Lemma~\ref{WeightedMeyers}]
\step{1} Dyadic estimates.\\
Let $u_0,u_1 \in H^1_{loc}(\R^d)$ and $g_0,g_1 \in L^2_{loc}(\R^d)$ be decaying and solving
\begin{equation}\nonumber
 -\nabla \cdot (a^* \nabla u_0) = \nabla \cdot g_0, \qquad -\nabla \cdot (a^* \nabla u_1) = g_1.
\end{equation}
For $\bar r\geq (r_*+1)$ we consider the dyadic decomposition $\R^d=\cup_{j\in\N_0}A_j$, 
  \begin{equation}\label{e1.11}
    A_j:=
    \begin{cases}
      \{\,|x|\leq 2\bar r\,\}&\text{for }j=0,\\
      \{\,2^{j}\bar r<|x|\leq 2^{j+1}\bar r\,\}&\text{for }j>0.
    \end{cases}
  \end{equation}
  We claim that for all $k,j\in\N_0$ we have
  \begin{align}\label{e1.10}  
    \biggl(\int_{A_j}|\nabla u_0|^2\biggr)^{\frac12} &\lesssim \sum_{k\in\N_0} \left(2^{-|k-j|d}\int_{A_k}|g_0|^2\right)^{\frac12},\\
    \label{e1.10+}
    \biggl(\int_{A_j}|\nabla u_1|^2\biggr)^{\frac12} &\lesssim \sum_{k\in\N_0}( (k-j)_++1) |A_0|^{\frac1d}\left(2^{-|k-j|d + 2\max\{k,j\}}\int_{A_k}|g_1|^2\right)^{\frac12},
  \end{align}
  where here and below $\lesssim$ means up to a constant only depending on $d$ and $\lambda$ (below, we shall also allow for dependence on $p$). The first relation is identical to~\cite[(161) in Step 2]{GNO4}, and the second is a consequence of the first: 

  Indeed, given $g_1 \in L^2_{loc}(\R^d)$, for $k \in \N_0$ we consider the decaying solution of $\triangle w_k = g_1 \chi_{A_k}$. Then using Green's function representation and H\"older's inequality we have, for $p \in (1,\infty)$ and for $j,k \in \N_0$ satisfying $|j-k| \ge 2$,
  \begin{equation}\nonumber
  \begin{aligned}
   \biggl(\int_{A_j}|\nabla w_k|^p\biggr)^{\frac1p} = & \biggl( \int_{A_j} \biggl| \int_{A_k} \nabla G(x,y) g_1(y) \dy \biggr|^p \dx \biggr)^{\frac1p} \\ 
\lesssim & \biggl( \int_{A_k} |g_1|^p \biggr)^{\frac1p}\biggl(\int_{A_j} \biggl( \int_{A_k} |\nabla G(x,y)|^{\frac{p}{p-1}} \dy \biggr)^{p-1} \dx \biggr)^{\frac1p}  
\\ \nonumber
\lesssim & |A_0|^{\frac1d} 2^{\frac1p(jd + kd(p-1) - (d-1)p\max(k,j))} \biggl( \int_{A_k} |g_1|^p \biggr)^{\frac1p}.
  \end{aligned}
  \end{equation}
  In the case $|j-k| \le 1$ the estimate follows immediately from the maximal regularity statement.
  
  Restricting to the case $p=2$ and using the triangle inequality, we see that the decaying solution $w$ of $\Delta w = g_1$ satisfies
  \begin{equation}\nonumber
   \biggl(\int_{A_j}|\nabla w|^2\biggr)^{\frac12} \lesssim \sum_{k\in\N_0} |A_0|^{\frac1d} 2^{\max\{k,j\}-|k-j|\frac d2}\left(\int_{A_k}|g_1|^2\right)^{\frac12}.
  \end{equation}
  Using~\eqref{e1.10} with $g_0 := \nabla w$ gives
  \begin{equation}\nonumber
   \begin{aligned}
   \biggl(\int_{A_j}|\nabla u_1|^2\biggr)^{\frac12} &\lesssim |A_0|^{\frac1d}\sum_{k\in\N_0} 2^{-|k-j|\frac{d}{2}}\sum_{k'\in\N_0} 2^{-|k-k'|\frac d2+ \max\{k,k'\}} \biggl(\int_{A_{k'}}|g_1|^2\biggr)^{\frac12}
\\ 
&= |A_0|^{\frac1d}\sum_{k'\in \N_0} \biggl(\int_{A_{k'}}|g_1|^2\biggr)^{\frac12} \sum_{k\in\N_0} 2^{-(|k-k'|+|k-j|)\frac{d}{2}+\max\{k,k'\}},
 \end{aligned}
  \end{equation}
and so \eqref{e1.10+} follows from the elementary bound
\begin{equation*}
 \begin{aligned}
 &\sum_{k\in\N_0} 2^{-(|k-k'|+|k-j|)\frac{d}{2}+\max\{k,k'\}}
 \\
 &= \sum_{k \le \min\{k',j\}} \ldots + \sum_{\min\{k',j\} < k < \max\{k',j\}} \ldots + \sum_{k \ge \max\{k',j\}} \ldots
 \\
 &\lesssim 2^{-|k'-j|\frac d2 + k'} + ((k'-j)_++1)2^{-|k'-j|\frac d2 + \max\{k',j\}} + 2^{-|k'-j|\frac d2 + \max\{k',j\}} 
 \\
  &\lesssim ( (k'-j)_++1)2^{-|k'-j|\frac d2 + \max\{k',j\}}.
 \end{aligned}
\end{equation*}

\medskip
\step{2} A Meyers-type estimate with weights.\\
In this step we establish the weighted Meyers estimates \eqref{e1.13}, \eqref{e1.13++} with the family of weight functions 
  \begin{equation}\nonumber
    \omega_\gamma(x)=\left(\frac{|x|}{r_*+r}+1\right)^\gamma.
  \end{equation}
  %
As already observed in~\cite[Proposition~3]{GNO4}, for $g$ supported in $\{|x|\leq r\}$ and $0 \le \alpha_0 < d(2p-1)$ we have
\begin{equation}\nonumber
  \left(\int |\nabla v|^{2p}\omega_{\alpha_0} \right)^{\frac{1}{2p}}\lesssim \left(\int |g|^{2p}\right)^{\frac{1}{2p}}.
\end{equation}
%
%
We start with the argument for~\eqref{e1.13}. Let $\{A_j\}_{j\in\N_0}$ denote the dyadic decomposition defined in~\eqref{e1.11} with $\bar r:=r_*+r$. From the classical Meyers estimate we deduce that for all $p\in[1,\bar p)$ and $j\in\N_0$ we have
\begin{equation}\label{e1.14}
    \left(\fint_{A_j}|\nabla v|^{2p}\right)^{\frac{1}{2p}}\lesssim \left(\fint_{A_j^+}|\nabla v|^{2}\right)^{\frac{1}{2}}+    \left(\fint_{A_j^+}|g|^{2p}\right)^{\frac{1}{2p}},
\end{equation}
where $A_j^+$ denotes the enlarged annulus defined by
\begin{equation*}
    A_j^+:=
    \begin{cases}
      A_0\cup A_1&\text{for }j=0,\\
      A_{j-1}\cup A_j\cup A_{j+1}&\text{for }j>0.
    \end{cases}
\end{equation*}
%
  %
  Since by~\eqref{e1.14}
  \begin{equation}\nonumber
   \int |\nabla {v}|^{2p}\omega_{\alpha_0} \lesssim \int |g|^{2p} \omega_{\alpha_0} + 
   \sum_{j\in\N_0}(\max_{A_j}\omega_{\alpha_0})|A_j|^{1-p}\biggl( \int_{A_j}|\nabla v|^{2} \biggr)^p,
  \end{equation}
  it is enough to show that
  \begin{equation}\label{e1.15}
   \sum_{j\in\N_0}(\max_{A_j}\omega_{\alpha_0})|A_j|^{1-p}\biggl( \int_{A_j}|\nabla v|^{2} \biggr)^p \lesssim \int |g|^{2p} \omega_{\alpha_1}.
  \end{equation}
Appealing to~\eqref{e1.10} in Step~3 with $g_0=g$ to estimate each integral, we get that 
\begin{align*}
   \sum_{j\in\N_0}(\max_{A_j}\omega_{\alpha_0})&|A_j|^{1-p}\biggl( \int_{A_j}|\nabla v|^{2} \biggr)^p 
   \lesssim 
   \sum_{j\in\N_0} 2^{j\alpha_0}|A_j|^{1-p} \biggl( \sum_{k\in\N_0} 2^{-|k-j|\frac{d}{2}}\biggl(\int_{A_k}|g|^2\biggr)^{\frac12} \biggr)^{2p}
   \\
   &\lesssim 
   \sum_{j\in\N_0} 2^{j\alpha_0}|A_j|^{1-p} \biggl( \sum_{k\in\N_0} 2^{-|k-j|\frac{d}{2}}|A_k|^\frac{p-1}{2p}\biggl(\int_{A_k}|g|^{2p}\biggr)^{\frac{1}{2p}} \biggr)^{2p}
   \\
   &\lesssim 
   \sum_{j\in\N_0} 2^{j\alpha_0}|A_j|^{1-p} \biggl( \sum_{k\in\N_0} 2^{-|k-j|\frac{d}{2}}|A_k|^\frac{p-1}{2p}2^{-k\frac{\alpha_1}{2p}}\biggl(\int_{A_k}|g|^{2p}\omega_{\alpha_1}\biggr)^{\frac{1}{2p}} \biggr)^{2p}
   \\
   &\lesssim 
   \sum_{j\in\N_0} 2^{j\alpha_0}2^{jd(1-p)} \biggl( \sum_{k\in\N_0} 2^{-|k-j|\frac{d}{2}}2^{kd\frac{p-1}{2p}
   }2^{-k\frac{\alpha_1}{2p}}\biggl(\int_{A_k}|g|^{2p}\omega_{\alpha_1}\biggr)^{\frac{1}{2p}} \biggr)^{2p}
   \\
   &\lesssim 
   \sum_{j\in\N_0} 2^{j\alpha_0}2^{jd(1-p)} \biggl( \sum_{k\in\N_0} 2^{-|k-j|\frac{dp}{2p-1}}2^{kd\frac{p-1}{2p-1}
   }2^{-k\frac{\alpha_1}{2p-1}}\biggr)^{2p-1} 
   \biggl(\sum_{k\in\N_0} \int_{A_k}|g|^{2p}\omega_{\alpha_1}\biggr),
  \end{align*}
  and~\eqref{e1.15} follows since 
  \begin{align}\label{e1.16}
   \sum_{k\in\N_0} 2^{-|k-j|\frac{dp}{2p-1}}2^{kd\frac{p-1}{2p-1}
   }2^{-k\frac{\alpha_1}{2p-1}} \lesssim 2^{(jd\frac{p-1}{2p-1} - j\frac{\alpha_1}{2p-1})},
\end{align}
and therefore  
\begin{align}\nonumber
   \sum_{j\in\N_0} 2^{j\alpha_0}2^{jd(1-p)} \biggl( \sum_{k\in\N_0} 2^{-|k-j|\frac{dp}{2p-1}}2^{kd\frac{p-1}{2p-1}
   }2^{-k\frac{\alpha_1}{2p-1}}\biggr)^{2p-1} 
   \lesssim 
   \sum_{j\in\N_0} 2^{j\alpha_0}2^{jd(1-p)} 2^{(jd(p-1) - j\alpha_1)} \lesssim 1,
\end{align}
where the sum converges thanks to $\alpha_1 > \alpha_0$. When $g_0$ is supported only in $A_0$ (i.e., the special case considered in~\cite{GNO4}), the sum in~\eqref{e1.16} is replaced by the term for $k=0$, and consequently the summability is guaranteed by the condition $ 0 \le \alpha_0 < d(2p-1)$. 
  
To show~\eqref{e1.13++}, we use the Meyer's estimate for solutions of equation with right-hand side in non-divergence form of the form~\eqref{e1.17b}:
\begin{equation}\label{e1.18}
    \left(\fint_{A_j}|\nabla \hat v|^{2p}\right)^{\frac{1}{2p}}\lesssim |A_j|^{\frac1d} \left(\fint_{A_j^+}|\hat g|^{2p}\right)^{\frac{1}{2p}} + \left(\fint_{A_j^+}|\nabla \hat v|^{2}\right)^{\frac{1}{2}},
\end{equation}
which follows from the standard Meyer's estimate.

Since $|A_j|^{\frac1d} = |A_0|^{\frac1d} 2^j$ and $2^j \lesssim \frac{|x|}{r+r_*}+1$ for $x\in A_j$, we have by~\eqref{e1.18}
\begin{equation}\nonumber
   \int |\nabla \hat{v}|^{2p}\omega_{\alpha_0} \lesssim |A_0|^{\frac{2p}{d}} \int |\hat g|^{2p} \omega_{\alpha_0+2p} + 
   \sum_{j\in\N_0}(\max_{A_j}\omega_{\alpha_0})|A_j|^{1-p}\biggl( \int_{A_j}|\nabla \hat v|^{2} \biggr)^p,
\end{equation}
and so it is enough to show that
\begin{equation}\label{e1.19}
   \sum_{j\in\N_0}(\max_{A_j}\omega_{\alpha_0})|A_j|^{1-p}\biggl( \int_{A_j}|\nabla \hat v|^{2} \biggr)^p \lesssim |A_0|^{\frac{2p}{d}}\int |\hat g|^{2p} \omega_{\alpha_1}
\end{equation}
with $\alpha_1 > \alpha_0 + 2p$. 
Appealing to~\eqref{e1.10+} in Step 3, we see that 
\begin{align*}
   \sum_{j\in\N_0}&(\max_{A_j}\omega_{\alpha_0})|A_j|^{1-p}\biggl( \int_{A_j}|\nabla \hat v|^{2} \biggr)^p 
   \\
   &\lesssim 
   |A_0|^{1-p} \sum_{j\in\N_0} 2^{j\alpha_0}2^{jd(1-p)} \biggl( \sum_{k\in\N_0}( (k-j)_++1) |A_0|^{\frac1d} 2^{-|k-j|\frac d2 + \max\{k,j\}} \biggl( \int_{A_k}|\hat g|^2\biggr)^{\frac12} \biggr)^{2p}
   \\
   &\lesssim 
   |A_0|^{1-p} \sum_{j\in\N_0} 2^{j\alpha_0}2^{jd(1-p)} \biggl( \sum_{k\in\N_0}( (k-j)_++1) |A_0|^{\frac1d} 2^{-|k-j|\frac d2 + \max\{k,j\}} |A_k|^{\frac{p-1}{2p}}\biggl( \int_{A_k}|\hat g|^{2p}\biggr)^{\frac{1}{2p}} \biggr)^{2p}
   \\
   &\lesssim 
   |A_0|^{\frac{2p}{d}} \sum_{j\in\N_0} 2^{j\alpha_0}2^{jd(1-p)} \biggl( \sum_{k\in\N_0}( (k-j)_++1) 2^{-|k-j|\frac d2 + \max\{k,j\}} 2^{kd\frac{p-1}{2p}} 2^{-k\frac{\alpha_1}{2p}}\biggl( \int_{A_k}|\hat g|^{2p} \omega_{\alpha_1} \biggr)^{\frac{1}{2p}} \biggr)^{2p}
   \\
   &\lesssim 
   |A_0|^{\frac{2p}{d}} \biggl( \int |\hat g|^{2p} \omega_{\alpha_1} \biggr) \sum_{j\in\N_0} 2^{j\alpha_0}2^{jd(1-p)} 
   \biggl( \sum_{k\in\N_0} \left( ( (k-j)_++1) 2^{-|k-j|\frac d2 + \max\{k,j\}} 2^{kd\frac{p-1}{2p}} 2^{-k\frac{\alpha_1}{2p}} \right)^{\frac{2p}{2p-1}} \biggr)^{2p-1},   
  \end{align*}
  which implies~\eqref{e1.19} provided we show the double sum is of order $1$. Since $\alpha_1 < d(2p-1)$, we see that 
  \begin{equation}\nonumber
   \biggl( \sum_{k\in\N_0} \left( ( (k-j)_++1) 2^{-|k-j|\frac d2 + \max\{k,j\}} 2^{kd\frac{p-1}{2p}} 2^{-k\frac{\alpha_1}{2p}} \right)^{\frac{2p}{2p-1}} \biggr)^{2p-1} 
   \lesssim 2^{2pj + jd(p-1) - j\alpha_1},
  \end{equation}
  which implies
  \begin{align*}
  \sum_{j\in\N_0} 2^{j\alpha_0}2^{jd(1-p)} &
   \biggl( \sum_{k\in\N_0} \left( ( (k-j)_++1) 2^{-|k-j|\frac d2 + \max\{k,j\}} 2^{kd\frac{p-1}{2p}} 2^{-k\frac{\alpha_1}{2p}} \right)^{\frac{2p}{2p-1}} \biggr)^{2p-1} 
   \\
   &\lesssim 
   \sum_{j\in\N_0} 2^{j\alpha_0}2^{jd(1-p)} 2^{2pj + jd(p-1) - j\alpha_1} \lesssim \sum_{j\in\N_0} 2^{j(\alpha_0 + 2p - \alpha_1)} \lesssim 1,
  \end{align*}
  the last estimate following from the fact that $\alpha_1 > \alpha_0 + 2p$. This concludes the proof.
\end{proof}

\section{Proof of Proposition \ref{StochasticMomentsSensitivity}: Estimate on stochastic moments of the sensitivity of the second-order corrector}

\begin{proof}[Proof of Proposition \ref{StochasticMomentsSensitivity}]
Raising both sides to the $m/2$-th power in the estimate \eqref{s.S2b} and taking the expectation, we infer by $\Big(\fint_{\{|x|\leq r\}} |g|^{2p}\Big)^\frac{1}{2p}\,\leq \, r^{-d}$
\begin{align*}
&\mathbb{E}\left[\left|\left|
\frac{\partial F\nabla(\psi,\Psi)}{\partial a}
\right|\right|_{CC}^m\right]
\\&
\leq C^m r^{-(2-1/p)dm/2}
\mathbb{E}\left[\left(\sum_D (r_* + r)^{\frac{2p}{p-1}} |D| (\min_D\omega_{\alpha-2p})^{-\frac{1}{p-1}} \biggl( \int_D|\nabla\phi+e|^2 \biggr)^{\frac{p}{p-1}} \right)^{m(p-1)/2p}
\right]
\\&\quad
+C^m r^{-(2-1/p)dm/2} \mathbb{E}\left[\left(\sum_D|D|
(\min_D\omega_{\alpha})^{-\frac{1}{p-1}} \biggl( \int_D|\nabla\psi|^2 + |\phi|^2 \biggr)^{\frac{p}{p-1}}\right)^{m(p-1)/2p}\right].
\end{align*}
Taking into account the bound $\omega_\gamma(x)\geq (1+r_\ast)^{-\gamma} (|x|/r+1)^\gamma$ for $\gamma>0$ (to see that, recall that we have assumed $r\geq 1$), we deduce by applying H\"older's inequality to the expectation
\begin{align*}
&\mathbb{E}\left[\left|\left|
\frac{\partial F\nabla(\psi,\Psi)}{\partial a}
\right|\right|_{CC}^m\right]
\\&
\leq C^m r^{-(2-1/p)dm/2}
\mathbb{E}\left[(1+r_\ast)^{2m+m(\alpha-2p)/p}\right]^{1/2}
\\&\quad\ \ \ \times
\mathbb{E}\left[\left(\sum_D (1 + r/r_*)^{\frac{2p}{p-1}} |D| (\min_D |x|/r+1)^{-(\alpha-2p)/(p-1)} \biggl( \int_D|\nabla\phi+e|^2 \biggr)^{\frac{p}{p-1}} \right)^{m(p-1)/p}\right]^{1/2}
\\&\quad
+C^m r^{-(2-1/p)dm/2} \mathbb{E}\left[(1+r_\ast)^{m\alpha/p}\right]^{1/2}
\\&\quad\ \ \ \times
\mathbb{E}\left[\left(\sum_D|D|
(\min_D |x|/r+1)^{-\alpha/(p-1)} \biggl( \int_D|\nabla\psi|^2 + |\phi|^2 \biggr)^{\frac{p}{p-1}}\right)^{m(p-1)/p}\right]^{1/2}.
\end{align*}
This entails using $|D|\lesssim r^{\beta d} (\min_D |x|/r+1)^{\beta d}$ (see Definition~\ref{Definition1}) and $r_\ast \geq 1$
\begin{align*}
&\mathbb{E}\left[\left|\left|
\frac{\partial F\nabla(\psi,\Psi)}{\partial a}
\right|\right|_{CC}^m\right]^{1/m}
\\&
\leq C r^{-(2-1/p)d/2+1+d(p-1)/2p+\beta d/2}
\mathbb{E}\left[(1+r_\ast)^{m\alpha/p}\right]^{1/2m}
\\&\ \ \ \times
\mathbb{E}\left[\left(r^{-d}\sum_D |D| (\min_D |x|/r+1)^{-(\alpha-2p)/(p-1)} \biggl( (\min_D |x|/r+1)^{\beta d} \fint_D|\nabla\phi+e|^2 \biggr)^{\frac{p}{p-1}} \right)^{m(p-1)/p}\right]^{1/2m}
\\&
+C r^{-(2-1/p)d/2+d(p-1)/2p+\beta d/2} \mathbb{E}\left[(1+r_\ast)^{m\alpha/p}\right]^{1/2m}
\\&\ \ \ \times
\mathbb{E}\left[\left(r^{-d}\sum_D|D|
(\min_D |x|/r+1)^{-\alpha/(p-1)} \biggl((\min_D |x|/r+1)^{\beta d} \fint_D|\nabla\psi|^2 + |\phi|^2 \biggr)^{\frac{p}{p-1}}\right)^{m(p-1)/p}\right]^{1/2m}.
\end{align*}
We next pull out the factors $(\min_D|x|/r+1)^{\beta d}$ from the inner parentheses. Our intention is to apply Jensen's inequality to the resulting weighted sums, the weights being of the form $r^{-d}|D| (\min_D |x|/r+1)^{-\gamma/(p-1)+\beta d p/(p-1)}$ with $\gamma=\alpha-2p$ and $\gamma=\alpha$. To apply Jensen in order to pull the power $m(p-1)/p$ under the sum, we need $m(p-1)/p>1$ (which is true for $m$ large) and a uniform bound on the sum of the weights. Owing to the factor $|D|$, the sum of the weights behaves like a Riemann sum for the integral of $r^{-d} (\min_D |x|/r+1)^{-\gamma/(p-1)+\beta d p/(p-1)}$. The sum of the weights is therefore bounded by a constant $C(d,\beta)$, provided that $-\gamma/(p-1)+\beta d p/(p-1)<-d$, which is satisfied provided that $-\alpha+2p+\beta d p<-d(p-1)$. In the case $\beta<1-2/d$, we may achieve this inequality by choosing $p>1$ close enough to $1$ and $\alpha<d$ close enough to $d$.

Thus, applying Jensen's inequality and just simplifying the prefactor involving $r$, we infer
\begin{align*}
&\mathbb{E}\left[\left|\left|
\frac{\partial F\nabla(\psi,\Psi)}{\partial a}
\right|\right|_{CC}^m\right]^{1/m}
\\&
\leq C r^{-(1-\beta)d/2+1}
\mathbb{E}\left[(1+r_\ast)^{m\alpha/p}\right]^{1/2m}
\\&\quad\ \ \ \times
\mathbb{E}\left[\sum_D r^{-d} |D| (\min_D |x|/r+1)^{-(\alpha-2p)/(p-1)+d\beta p/(p-1)} \biggl(\fint_D|\nabla\phi+e|^2 \biggr)^{m} \right]^{1/2m}
\\&\quad
+C r^{-(1-\beta)d/2} \mathbb{E}\left[(1+r_\ast)^{m\alpha/p}\right]^{1/2m}
\\&\quad\ \ \ \times
\mathbb{E}\left[\sum_D r^{-d} |D|
(\min_D |x|/r+1)^{-\alpha/(p-1)+d\beta p/(p-1)} \biggl(\fint_D|\nabla\psi|^2 + |\phi|^2 \biggr)^{m}\right]^{1/2m}.
\end{align*}
Using
\begin{align*}
\mathbb{E} \left[\biggl(\fint_D|\nabla\psi|^2 + |\phi|^2 \biggr)^{m}\right]
\lesssim \mathbb{E} \left[\biggl(\fint_{\{|x|\leq 1\}}|\nabla\psi|^2 + |\phi|^2 \biggr)^{m}\right]
\end{align*}
and
\begin{align*}
\mathbb{E} \left[\biggl(\fint_D|\nabla\phi+e|^2 \biggr)^{m}\right]
\lesssim \mathbb{E} \left[\biggl(\fint_{\{|x|\leq 1\}}|\nabla\phi+e|^2 \biggr)^{m}\right]
\end{align*}
(these inequalities follow by covering $D$ by balls, applying H\"older's inequality, and using stationarity of the involved quantities) as well as again the fact that the sum of the weights is bounded by a constant, we finally obtain
\begin{align*}
&\mathbb{E}\left[\left|\left|
\frac{\partial F\nabla(\psi,\Psi)}{\partial a}
\right|\right|_{CC}^m\right]^{1/m}
\\&
\leq C r^{-(1-\beta)d/2+1}
\mathbb{E}\left[(1+r_\ast)^{m\alpha}\right]^{1/2m}
\mathbb{E}\left[\biggl(\fint_{\{|x|\leq 1\}}|\nabla\phi+e|^2 \biggr)^{m} \right]^{1/2m}
\\&\quad
+C r^{-(1-\beta)d/2} \mathbb{E}\left[(1+r_\ast)^{m\alpha/p}\right]^{1/2m}
\mathbb{E}\left[\biggl(\fint_{\{|x|\leq 1\}}|\nabla\psi|^2 + |\phi|^2 \biggr)^{m}\right]^{1/2m}.
\end{align*}
This is the desired result.
\end{proof}

\section{Proof of Proposition~\ref{propCZ}: A large-scale $L^p$ theory for elliptic operators with random coefficients}

\begin{proof}[Proof of Proposition \ref{propCZ}]
 Since the statement and the proof of Proposition~\ref{propCZ} are almost identical to~\cite[Proposition 4]{DuerinckxGloriaOtto}, we will only point out the differencies. The proof follows the standard approach to Calder\'on-Zygmund estimate in the constant-coefficient case, that passes via a BMO-estimate and interpolation (see, e.g.,~\cite[Section 7.1.1]{giaqmart}).

 More precisely, combination of (A.26), (A.24), and (A.5) from that paper, together with the fact that the constant in the interpolation between $L^2 \to L^2$ and $L^\infty \to \textrm{BMO}$ grows like $p$ (this can be seen, e.g., by estimating constants in proofs of~\cite[Theorem 6.27, Theorem 6.29]{giaqmart}), we arrive at 
 \begin{equation}\nonumber
  \biggl( \int \biggl( \fint_{B_{\underline{r_*(x)}}(x)} |\nabla u|^2 \biggr)^p \,dx \biggr)^{\frac 1p} \le C(d,\lambda)p \biggl( \int \biggl( \sup_{R \ge 1} \fint_{B_R(x)} |g|^2 \biggr)^p \,dx \biggr)^{\frac 1p}.
 \end{equation}
 For $h(x) := \fint_{B_1(x)} |g|^2$ we observe that for any $R \ge 1$
 \begin{equation}\nonumber
  \int_{B_R(x)} h(y) \dy = \int_{B_R(x)} \fint_{B_1(y)} |g(z)|^2 \dz \,dy = \frac{1}{|B_1|}\int_{B_{R+1}(x)} |g(z)|^2 |\{ B_1(z) \cap B_R(x)\}| \dz \gtrsim \int_{B_R(x)} |g|^2,
 \end{equation}
 which together with boundedness of the maximal function as an operator from $L^p \to L^p$ implies (note that the bound $C(d)p$ on the operator norm of the maximal operator follows from the proof by duality, the $L^1-L^1_{weak}$ bound, the $L^2-L^2$ bound, and the behavior of the constant in the Marcinkiewicz interpolation theorem \cite[Proof of Theorem 2.4]{FourierAnalysis})
\begin{align*}
\int \biggl( \sup_{R \ge 1} \fint_{B_R(x)} |g|^2 \biggr)^p &\leq C(d)^p \int \biggl( \sup_{R \ge 1} \fint_{B_R(x)} h \biggr)^p \le C(d)^p \int \biggl( \sup_{R > 0} \fint_{B_R(x)} h \biggr)^p
\\
&\leq C(d)^p p^p \int h^p = C(d)^p p^p \int \biggl( \fint_{B_1(x)} |g|^2 \biggr)^p \dx,
\end{align*}
and the proof of the proposition is complete.
\end{proof}

{
\providecommand{\Norm}[2]{\biggl( \int \biggl( \fint_{B_1(x)} | {#1} |^2 \biggr)^{\frac{#2}{2}}\dx \biggr)^{\frac {1}{#2}}}
\providecommand{\Normm}[3]{\biggl( \int \biggl( \fint_{B_{#3}(x)} | {#1} |^2 \biggr)^{\frac{#2}{2}}\dx \biggr)^{\frac {1}{#2}}}
\providecommand{\Normmm}[3]{\biggl( \int \biggl( \fint_{B_{#3}(x)} | {#1} |^2 \biggr)^{\frac{#2}{2}}\dx \biggr)^{\frac {2}{#2}}}
\providecommand{\NorM}[2]{\biggl( \int \biggl( \fint_{B_1(x)} | {#1} |^2 \biggr)^{\frac{#2}{2}}\dx \biggr)^{\frac {2}{#2}}}

\begin{lemma}\label{lm4}
 For smooth decaying $f$, let $u$ be the decaying solution of 
 \begin{equation}\label{e2.1}
  -\Laplace u = f.
 \end{equation}
 Then for $p \ge 2$ and $\frac{1}{p^*} = \frac 1p - \frac 1d$ we have
 \begin{equation}\label{e2.2}
  \Norm{\nabla u}{p^*} \lesssim (p^*)^2 \Norm{f}{p},
 \end{equation}
 where $\lesssim$ means $\le C$ for a constant only depending on $d$.
\end{lemma}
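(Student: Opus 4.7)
The plan is to reduce the continuous inequality to a discrete Hardy--Littlewood--Sobolev (HLS) estimate on $\mathbb{Z}^d$ by discretizing both sides on a unit-cube grid. The factor $(p^*)^2$ will arise from combining the Calder\'on--Zygmund estimate $\|\nabla^2 v\|_{L^p} \lesssim p\,\|\Delta v\|_{L^p}$ (constant linear in $p$) with the Sobolev embedding $\|\nabla v\|_{L^{p^*}} \lesssim p^*\,\|\nabla^2 v\|_{L^p}$ (constant linear in $p^*$), using $p \le p^*$ for $p \ge 2$.

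First I would tile $\mathbb{R}^d$ by unit cubes $\{Q_z\}_{z \in \mathbb{Z}^d}$ and set $F_z := \|f\|_{L^2(Q_z)}$, $G_z := \|\nabla u\|_{L^2(Q_z)}$. A standard covering argument (noting that for $x \in Q_{z_0}$ the ball $B_1(x)$ is contained in a bounded union of neighboring cubes) yields
\[
\Bigl(\int \bigl(\fint_{B_1(x)} |f|^2\bigr)^{p/2} \dx\Bigr)^{1/p} \;\sim\; \|F\|_{\ell^p(\mathbb{Z}^d)},
\]
and analogously for the left-hand side with exponent $p^*$, with implicit constants depending only on $d$. It therefore suffices to prove $\|G\|_{\ell^{p^*}} \lesssim (p^*)^2 \|F\|_{\ell^p}$.

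The key step is the deterministic pointwise bound
\[
G_{z_0} \;\lesssim\; \sum_{z \in \mathbb{Z}^d} (1+|z-z_0|)^{1-d}\, F_z, \qquad z_0 \in \mathbb{Z}^d,
\]
obtained by splitting $f = f_{\mathrm{near}} + f_{\mathrm{far}}$ with $f_{\mathrm{near}} := f\mathbf{1}_{B_3(z_0)}$. For the near piece, $\nabla u_{\mathrm{near}}$ is (up to Riesz transforms) a Riesz potential of $f_{\mathrm{near}}$, so the classical $L^{2d/(d+2)} \to L^2$ HLS estimate combined with H\"older on the bounded support gives $\|\nabla u_{\mathrm{near}}\|_{L^2(\mathbb{R}^d)} \lesssim \|f_{\mathrm{near}}\|_{L^2} \lesssim \sum_{|z-z_0|\le 4} F_z$. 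For the far piece, the Green's function representation $\nabla u_{\mathrm{far}}(x) = c_d \int |x-y|^{-d}(x-y)\, f_{\mathrm{far}}(y)\dy$ combined with $|x-y| \gtrsim |z-z_0|$ (valid for $x \in Q_{z_0}$ and $y \in Q_z$ with $|z-z_0| \ge 2$) and a Cauchy--Schwarz on each cube $Q_z$ gives $\sup_{Q_{z_0}}|\nabla u_{\mathrm{far}}| \lesssim \sum_{|z-z_0|\ge 2}(1+|z-z_0|)^{1-d}\, F_z$.

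Finally I would invoke a discrete HLS inequality: the convolution operator with kernel $K(z) := (1+|z|)^{1-d}$ on $\mathbb{Z}^d$ maps $\ell^p \to \ell^{p^*}$ with norm $\lesssim (p^*)^2$ whenever $\tfrac{1}{p^*} = \tfrac{1}{p} - \tfrac{1}{d}$ and $p \ge 2$. This follows from continuous HLS applied to the step function $\tilde a := \sum_z a_z \mathbf{1}_{Q_z}$ (whose $L^p$ norm equals $\|a\|_{\ell^p}$), together with the bound $\|I_1\tilde a\|_{L^{p^*}} \lesssim (p^*)^2 \|\tilde a\|_{L^p}$ which comes from the CZ+Sobolev chain in the first paragraph. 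The main obstacle I anticipate is purely combinatorial bookkeeping: tracking universal constants through the near/far split and verifying the borderline summability of the kernel $K$ in the discrete convolution. No genuinely new analytic input beyond classical harmonic analysis is required.
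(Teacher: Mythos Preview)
Your approach is sound and genuinely different from the paper's. The paper never discretizes or touches the Green's function. Instead it introduces a unit-scale mollification $u_1=u*\varphi$, uses the local $H^2$ estimate $\fint_{B_{2R}(x)}|\nabla^2 u|^2\lesssim \fint_{B_{3R}(x)}|f|^2+R^{-2}\fint_{B_{3R}(x)}|\nabla u|^2$ to control $\fint_{B_R(x)}|\nabla u-\nabla u_1|^2$, takes the $L^{p^*/2}$-norm in $x$, and absorbs the last term by fixing $R=R(d)$ large. The smooth remainder $\nabla u_1$ is handled by the pointwise Jensen step $\bigl(\fint_{B_1(x)}|\nabla u_1|^2\bigr)^{1/2}\le |\nabla u_1(x)|$ (after a further convolution), so the classical non-averaged chain $\|\nabla u_1\|_{L^{p^*}}\lesssim p^*\|\nabla^2 u_1\|_{L^p}\lesssim (p^*)^2\|f_1\|_{L^p}$ applies verbatim; a reverse H\"older step $\|\cdot\|_{p^*}\le C(d)\|\cdot\|_p$ for the averaged norms closes the argument. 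Your kernel route is more explicit and skips the absorption manoeuvre, but it leans on the exact form of the Newtonian potential; the paper's mollify-and-absorb argument is operator-agnostic and is what gets recycled later in Lemma~\ref{lmiteration} for the variable-coefficient version.

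One small point to tighten: you claim $\|I_1\tilde a\|_{L^{p^*}}\lesssim (p^*)^2\|\tilde a\|_{L^p}$ ``comes from the CZ+Sobolev chain,'' but that chain bounds $\|\nabla(-\Delta)^{-1}g\|_{L^{p^*}}$, not the scalar Riesz potential $I_1 g$. Passing between them requires a Riesz transform ($I_1=-\sum_j R_j\partial_j(-\Delta)^{-1}$), which costs an extra factor $p^*$ on $L^{p^*}$. The count still closes because the CZ factor is $\sim p<d$, hence $O_d(1)$, so the total is $p^*\ (\text{Riesz})\times p^*\ (\text{Sobolev})\times O_d(1)\ (\text{CZ})=(p^*)^2$. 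Alternatively, just cite the classical HLS inequality directly; its constant is $O(p^*)$, which is more than enough.
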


\begin{proof}
 The existence of a unique decaying solution $u$ of ~\eqref{e2.1} follows from the standard theory. 

Let $R \ge 1$, the value of which will be chosen later. Using the maximal regularity statement for the Laplace operator~\eqref{e2.1} of the form $\int |\nabla^2 u|^2 \lesssim \int |f|^2$, applied to $\eta (u - \bar u)$, with $\eta$ being a cutoff with $\eta\equiv 1$ in $B_{2R}$ and $\eta \equiv 0$ outside of $B_{3R}$ and with $\bar u = \fint_{B_{3R}} u$, we see
\begin{equation}\nonumber
\| \nabla^2 (\eta (u-\bar u))\|_{L^2}^2 \lesssim \| \eta f \|_{L^2}^2 + \|\nabla \eta \nabla u\|_{L^2}^2 + \| \Laplace \eta (u-\bar u)\|_{L^2}^2.
\end{equation}
Using Poincar\'e inequality on the last term together with properties of the cutoff function $\eta$ implies
\begin{equation}\label{e2.4}
\fint_{B_{2R}}  |\nabla^2 u|^2 \lesssim \fint_{B_{3R}} |f|^2 + \frac{1}{R^2} \fint_{B_{3R}} |\nabla u|^2. 
\end{equation}
Denoting $u_1$ a convolution of $u$ with a smooth mollifier $\varphi \ge 0$ such that $\supp \varphi \subset B_1$ and $\int \varphi = 1$, we see
\begin{equation}\label{e2.5}
\fint_{B_R} |\nabla u_1 - \nabla u|^2 \lesssim \fint_{B_{2R}} |\nabla^2 u|^2. 
\end{equation}
We combine~\eqref{e2.4} and~\eqref{e2.5} to arrive at
\begin{equation}\nonumber
\fint_{B_R} |\nabla u|^2 \lesssim \fint_{B_R} |\nabla u_1|^2 + \fint_{B_{3R}} |f|^2 + \frac{1}{R^2} \fint_{B_{3R}}  |\nabla u|^2. 
\end{equation}
Using this estimate for balls centered at $x$, and applying the $L^{\frac{p^\ast}{2}}(\R^d)$-norm to both sides, we get by the triangle inequality
 \begin{equation}\label{e2.7}
 \begin{aligned}
  \Normmm{\nabla u}{p^*}{R} &\lesssim \Normmm{\nabla u_1}{p^*}{R} + \Normmm{f}{p^*}{3R} 
  \\
   &\quad + \frac{1}{R^2} \Normmm{\nabla u}{p^*}{3R}.
 \end{aligned}
 \end{equation} 
 Covering $B_{3R}$ with finitely many balls $B_R$ (the number of balls being independent of $R$), we see by the triangle inequality
 \begin{equation}\nonumber
  \Normmm{\nabla u}{p^*}{3R} \le C(d) \Normmm{\nabla u}{p^*}{R}.
 \end{equation}

 Hence we can fix large enough $R=R(d)$ such that the last term in~\eqref{e2.7} can be absorbed to the left-hand side:
 \begin{equation}\nonumber
 \begin{aligned}
  \Normmm{\nabla u}{p^*}{R} &\lesssim \Normmm{\nabla u_1}{p^*}{R} + \Normmm{f}{p^*}{3R}.
 \end{aligned}
 \end{equation}
 Using again a covering argument, at the expense of additional $C(d)$ we see that in fact
 \begin{equation}\label{e2.11}
 \begin{aligned}
  \Normmm{\nabla u}{p^*}{1} &\lesssim \Normmm{\nabla u_1}{p^*}{1} + \Normmm{f}{p^*}{1}.
 \end{aligned}
 \end{equation}
 
 Next we observe that for any measurable function $g$ and any exponents $2 \le p_1 \le p_0 < \infty$ we have reverse H\"older inequality of the form
 \begin{equation}\label{e2.12}
  \NorM{g}{p_0} \le C(d) \NorM{g}{p_1}.
 \end{equation}
 Indeed, if $\Sigma$ denotes the set of all cubes $Q$ with side length $1$ and integer coordinates, then we see that 
 \begin{equation}\nonumber
  \int_{B_1(x)} |g|^2 \le \sum_{Q \cap B_1(x) \neq \emptyset} a_Q^2
 \end{equation}
 where $a_Q^2 = \fint_Q |g|^2$, which after applying the triangle inequality in $L^{\frac{p_0}{2}}(\R^d)$ turns into
 \begin{equation}\label{e2.13}
  \NorM{g}{p_0} \le C(d) \biggl( \sum_{Q \in \Sigma} a_Q^{p_0} \biggr)^{\frac{2}{p_0}}.
 \end{equation}
 At the same time, since $|\{ x \in \R^d : Q \subset B_1(x) \}|$ is independent of $Q$ and positive, we see that also  
 \begin{equation}\label{e2.14}
  \biggl( \sum_{Q \in \Sigma} a_Q^{p_1} \biggr)^{\frac{2}{p_1}} \le C(d) \NorM{g}{p_1}.
 \end{equation}
 Since for sequences the $l^{p_0}$ norm is bounded by the $l^{p_1}$ norm, \eqref{e2.12} immediately follows from \eqref{e2.13} and~\eqref{e2.14}.

Using the fact that Poisson equation is linear, in particular $- \Laplace u_1 = f_1$ so that the maximal regularity statement applies, by the Jensen and Sobolev inequalities (for the dependence of the Sobolev constant on $p^*$, see for example \cite{SobolevConst}) we have 
 \begin{equation}\label{e2.15}
 \begin{aligned}
  \Norm{\nabla u_1}{p^*} &\le \biggl( \int |\nabla u_1|^{p^*} \biggr)^{\frac{1}{p^*}} \lesssim p^* \biggl( \int |\nabla^2 u_1|^p \biggr)^{\frac 1p} \lesssim (p^*)^2 \biggl( \int |f_1|^p \biggr)^{\frac 1p} 
\\ &\lesssim (p^*)^2 \Norm{f}{p}.
  \end{aligned}
 \end{equation}
To conclude, we plug~\eqref{e2.15} into~\eqref{e2.11}, and use~\eqref{e2.12} with $g=f$ to obtain~\eqref{e2.2}.
\end{proof}
}

%


{
\providecommand{\Norm}[2]{\biggl( \int \biggl( \fint_{B_1(x)} | {#1} |^2 \biggr)^{\frac{#2}{2}}\dx \biggr)^{\frac {1}{#2}}}
\providecommand{\Normm}[3]{\biggl( \int \biggl( \fint_{B_{#3}(x)} | {#1} |^2 \biggr)^{\frac{#2}{2}}\dx \biggr)^{\frac {1}{#2}}}
\providecommand{\rr}{{\underline{r_*}}}
\providecommand{\NormR}[4]{\biggl( \int_{B_{#3}} \biggl( \fint_{B_{#4}(x)} | {#1} |^2 \biggr)^{\frac{#2}{2}}\dx \biggr)^{\frac {1}{#2}}}
\providecommand{\Normmm}[3]{\biggl( \int \biggl( \fint_{B_{#3}(x)} | {#1} |^2 \biggr)^{\frac{#2}{2}}\dx \biggr)^{\frac {2}{#2}}}

\begin{lemma}\label{lmiteration}
Let the assumptions of Proposition~\ref{propCZ} be satisfied.
 Let $\nabla u$ and $g$ be stationary vector fields, related through
 \begin{equation}\nonumber
  -\nabla \cdot (a \nabla u) = \nabla \cdot g.
 \end{equation}
 Then for any $p \in [2,d)$ we have
 \begin{equation}\label{e4.0}
  \mathbb{E}\Biggl[ (\rr)^{-\frac{dp^*}{2}} \biggl( \int_{B_1} |\nabla u|^2 \biggr)^{\frac{p^*}{2}} \Biggr]^\frac{1}{p^*}
  \lesssim 
  (p^*)^3 \mathbb{E}\Biggl[ \biggl( \int_{B_1} |g|^2 \biggr)^{\frac{p^*}{2}} \Biggr]^\frac{1}{p^*} + 
  (p^*)^3 \mathbb{E}\Biggl[ \biggl( \int_{B_1} |\nabla u|^2 \biggr)^{\frac{p}{2}} \Biggr]^\frac 1p.
 \end{equation}
 where $\frac{1}{p^*} = \frac 1p - \frac 1d$, and where $\lesssim$ means $\le C$ for a constant only depending on $d$ and $\lambda$.
\end{lemma}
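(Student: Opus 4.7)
The plan is to combine Proposition~\ref{propCZ}, which resolves the random structure at the large scale $\rr$ without any Sobolev gain, with Lemma~\ref{lm4}, which delivers the Sobolev improvement $p\to p^*$ for the Poisson equation at the unit scale. The idea is to treat the random operator as a perturbation of $-\Laplace$, apply Poisson--Sobolev to a mollified piece, and then upgrade the resulting unit-scale estimate to the random scale $\rr$ via the large-scale Calder\'on--Zygmund estimate. Since $\nabla u$ and $g$ are stationary, both deterministic ingredients must be applied to a truncated problem before being converted into moment estimates.

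First I would rewrite $-\nabla\cdot(a\nabla u)=\nabla\cdot g$ as the perturbed Poisson equation $-\Laplace u=\nabla\cdot G$ with $G:=g+(\mathrm{I}-a)\nabla u$. Mollifying with a smooth, compactly supported unit-scale kernel $\varphi$ yields $u_1:=u*\varphi$ satisfying the non-divergence-form Poisson equation $-\Laplace u_1 = G*\nabla\varphi$, so that Lemma~\ref{lm4} applies and gives
\begin{equation*}
\Norm{\nabla u_1}{p^*} \lesssim (p^*)^2\,\Norm{G*\nabla\varphi}{p} \lesssim (p^*)^2\bigl(\Norm{g}{p}+\Norm{\nabla u}{p}\bigr),
\end{equation*}
using the pointwise bound $|G|\le |g|+|\nabla u|$ and the boundedness of convolution with a compactly supported unit-scale kernel on the $\Norm{\cdot}{p}$-norm. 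The oscillation $\nabla u-\nabla u_1$, which lives at scale $1$, is controlled in the same way through the maximal regularity of $-\Laplace(u-u_1)=\nabla\cdot(G-G*\varphi)$ applied on unit balls.

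Next I would invoke Proposition~\ref{propCZ} with integration exponent $p^*/2$ to lift the unit-scale average on the left-hand side of~\eqref{e4.0} to the random-scale $\rr$-ball average. The elementary inequality $\int_{B_1}|\nabla u|^2 \lesssim \rr^d\fint_{B_\rr}|\nabla u|^2$ (valid since $\rr\ge 1$) turns the prefactor $(\rr)^{-dp^*/2}$ exactly into a $\fint_{B_\rr}$-average, and Proposition~\ref{propCZ} controls this by $(p^*)^2$ times a unit-scale $L^{p^*/2}$-norm of $|g|^2$ plus the nonlinear contribution from $(\mathrm{I}-a)\nabla u$. Conversion to moments exploits stationarity through the identity $\int_A\mathbb{E}\Phi\,dx=|A|\mathbb{E}\Phi$; one additional power of $p^*$, beyond the Sobolev $(p^*)^2$, enters from a maximal-function-type step needed to bring the right-hand side to unit-scale averages, which produces the advertised $(p^*)^3$.

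The main obstacle will be to reconcile the stationarity of $\nabla u$ and $g$ with the decay hypotheses required by Proposition~\ref{propCZ} and Lemma~\ref{lm4}: for stationary fields the spatial norms $\Norm{\cdot}{p}$ are identically $+\infty$. I would therefore apply the deterministic estimates to the truncated problem $-\nabla\cdot(a\nabla u^R)=\nabla\cdot(\chi_R g)$ on a large ball $B_R$, control the error $u-u^R$ using the $a$-harmonicity of the difference on $B_R$ together with the mean-value property built into $r_*$, normalize by $|B_R|^{1/p^*}$, and pass to the limit $R\to\infty$ by stationarity. A secondary bookkeeping difficulty is to make sure that the term involving $(\mathrm{I}-a)\nabla u$ is absorbed so as to enter the right-hand side only at the weaker exponent $p$ rather than $p^*$---this is precisely what makes the estimate a genuine bootstrap rather than a closed estimate, and requires tuning the mollification step together with the Calder\'on--Zygmund step so that the coupling $\nabla u\leftrightarrow(\mathrm{I}-a)\nabla u$ only costs a unit-scale $L^p$ moment.
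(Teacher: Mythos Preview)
Your proposal assembles the right two ingredients (Proposition~\ref{propCZ} and Lemma~\ref{lm4}) but combines them in a way that does not close, and it is genuinely different from the paper's argument.

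The paper does \emph{not} treat $-\nabla\cdot(a\nabla\cdot)$ as a perturbation of $-\Delta$. Instead it truncates $u$ (not $g$): it sets $u_R=\eta_R(u-\bar u)$ with a cutoff $\eta_R$ at scale $R$ and applies Proposition~\ref{propCZ} at exponent $p^*/2$ directly to the equation for $u_R$, whose right-hand side contains the bulk term $\eta_R g$, a divergence boundary term $a\nabla\eta_R(u-\bar u)$, and a \emph{non-divergence} boundary term $\nabla\eta_R\cdot(g+a\nabla u)$. Lemma~\ref{lm4} is used only to convert this last term into divergence form via an auxiliary $-\Delta w=\nabla\eta_R\cdot(g+a\nabla u)$. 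The crucial point is that both boundary terms carry a factor $|\nabla\eta_R|\sim 1/R$; together with the Sobolev gain $p\to p^*$ (which costs $R^{d/p-d/p^*}=R$ when normalizing), this is exactly how the $\nabla u$ contribution ends up on the right-hand side at the weaker exponent $p$ with an $O(1)$ constant. The relation $\int_{B_1(x)}|\nabla u|^2\le\rr^d\fint_{B_{\rr}(x)}|\nabla u_R|^2$ for $x\in B_{R/2}$ then yields~\eqref{e4.0} after dividing by $R^{d}$ and invoking ergodicity.

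In your scheme the term $(\mathrm{I}-a)\nabla u$ sits in the \emph{bulk} source $G$, with no $1/R$ factor to exploit. The mollified estimate $\Norm{\nabla u_1}{p^*}\lesssim(p^*)^2\Norm{G}{p}$ is fine, but the step ``the oscillation $\nabla u-\nabla u_1$ is controlled through maximal regularity of $-\Delta(u-u_1)=\nabla\cdot(G-G*\varphi)$ on unit balls'' does not deliver what you need: a local energy or Caccioppoli estimate on $B_1(x)$ produces a boundary contribution of order $\fint_{B_2(x)}|u-u_1-c|^2\lesssim\fint_{B_3(x)}|\nabla u|^2$, so that $\fint_{B_1(x)}|\nabla(u-u_1)|^2\lesssim\fint_{B_3(x)}(|g|^2+|\nabla u|^2)$. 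Raising to the $p^*/2$ power and integrating gives $\Norm{\nabla u}{p^*}$ back on the right-hand side, and the spatial reverse H\"older~\eqref{e2.12} does not rescue this after passing to moments (the normalization by $R^{d/p^*}$ versus $R^{d/p}$ costs a full factor $R$). Your last paragraph anticipates a difficulty here, but the proposed fix---``tuning the mollification step together with the Calder\'on--Zygmund step''---is not a mechanism; there is no small parameter in the bulk coupling $(\mathrm{I}-a)\nabla u$ that would let you absorb it. The paper avoids this entirely by never introducing $(\mathrm{I}-a)\nabla u$ and by placing the only $\nabla u$ dependence on the \emph{boundary} of the cutoff, where the $1/R$ does the work.
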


\begin{proof}
 By ergodicity of $\en{\cdot}$ it is enough to show that for any radius $R \ge 1$ and a generic realization of the coefficient field $a$ we have
 \begin{multline}\label{e4.1}
   \biggl( \int_{B_{\frac R2}} (\rr(x))^{-\frac{dp^*}{2}} \biggl( \int_{B_1(x)} |\nabla u|^2 \biggr)^\frac{p^*}{2} \dx \biggr)^{\frac{1}{p^*}} 
   \\ \lesssim (p^*)^3 \NormR{g}{p^*}{2R}{1} + \frac{(p^*)^3}{R} \NormR{\nabla u}{p}{3R}{1}.
 \end{multline}

 For $R \ge 1$ and $\eta_R := \eta(\cdot/R)$, with $\eta$ being a smooth cut-off function with $\eta\equiv 1$ in $B_1$ and $\eta \equiv 0$ outside of $B_2$, and a constant $\bar u$ to be chosen later, the function $u_R := \eta_R (u - \bar u)$ solves
 \begin{equation}\nonumber
  -\nabla \cdot (a \nabla u_R) = \nabla \cdot (\eta_R g - a \nabla \eta_R (u-\bar u)) - \nabla \eta_R \cdot (g + a\nabla u).
 \end{equation} 
  
 By Lemma~\ref{lm4}, the vector field $h := \nabla w$, with $-\triangle w = \nabla \eta_R \cdot (g + a\nabla u)$ satisfies the estimate
 \begin{equation}\label{p.1}
  \Norm{h}{p^*} \lesssim (p^*)^2 \Norm{\nabla \eta_R \cdot (g + a\nabla u)}{p}
 \end{equation}
 and clearly
 \begin{equation}\nonumber
  -\nabla \cdot (a \nabla u_R) = \nabla \cdot (\eta_R g + h - a \nabla \eta_R (u-\bar u)),
 \end{equation}
 which by Proposition~\ref{propCZ} (applied with the exponent $p^*/2$) and~\eqref{p.1} yield
 \begin{equation}\nonumber
 \begin{aligned}
  \Normm{\nabla u_R}{p^*}{\rr} \lesssim & p^* \Norm{\eta_R g}{p^*} \\ + & p^* \Norm{\nabla \eta_R (u-\bar u)}{p^*} \\ + & (p^*)^3 \Norm{\nabla \eta_R \cdot g}{p} \\ + & (p^*)^3 \Norm{\nabla \eta_R \cdot a \nabla u}{p},
 \end{aligned}
 \end{equation}
 the $p^*$-factor from~\eqref{propCZ} being $p^*$ and not $(p^*)^2$ since we have taken the square root of the estimate.
 We will now estimate all four integrals on the right-hand side.

 By the support condition on $\eta$ we have for the first and the third integral
 \begin{align*}
  \Norm{\eta_R g}{p^*} &\lesssim 
  \biggl( \int_{B_{2R}} \biggl( \fint_{B_1(x)} |g|^2 \biggr)^{\frac{p^*}{2}} \,dx \biggr)^{\frac{1}{p^*}},\\
  \Norm{\nabla \eta_R \cdot g}{p} &\lesssim \frac 1R \biggl( \int_{B_{2R}} \biggl( \fint_{B_1(x)} |g|^2 \biggr)^{\frac{p}{2}} \,dx \biggr)^{\frac{1}{p}}
  \overset{\textrm{H\"older}}{\lesssim} \biggl( \int_{B_{2R}} \biggl( \fint_{B_1(x)} |g|^2 \biggr)^{\frac{p^*}{2}} \,dx \biggr)^{\frac{1}{p^*}}.
 \end{align*}
 For the fourth integral we similarly have 
 \begin{equation*}
  \Norm{\nabla \eta_R \cdot a \nabla u}{p} \lesssim \frac 1R \biggl( \int_{B_{2R}} \biggl( \fint_{B_1(x)} |\nabla u|^2 \biggr)^{\frac{p}{2}}  \,dx \biggr)^{\frac{1}{p}}.
 \end{equation*}
 For the second integral we will show that 
 \begin{equation}\nonumber
  \Norm{\nabla \eta_R (u-\bar u)}{p^*} \lesssim \frac{p^*}{R} \biggl( \int_{B_{3R}} \biggl( \fint_{B_1(x)} |\nabla u|^2\biggr)^{\frac{p}{2}} \,dx \biggr)^{\frac{1}{p}},
 \end{equation}
 which by properties of $\eta_R$ follows from
 \begin{equation}\label{e4.2}
  \NormR{u-\bar u}{p^*}{2R}{1} \lesssim p^* \NormR{\nabla u}{p}{3R}{1}.
 \end{equation}
 To this end we consider $u_1 := u * \varphi$, with $\varphi$ being a smooth convolution kernel on scale $1$ (i.\,e.\ $\varphi \in C^\infty$, $\varphi \ge 0$, $\supp \varphi \in B_1$, $\int \varphi = 1$). Then 
 \begin{equation*}
  \int_{B_1(x)} |u(y) - u_1(y)|^2 \dy \lesssim \int_{B_2(x)} |\nabla u|^2,
 \end{equation*}
 and by covering $B_2$ by finitely many balls of radius $1$ we get
 \begin{equation}\nonumber
  \NormR{u - u_1}{p^*}{2R}{1} \lesssim \NormR{\nabla u}{p^*}{3R}{1} \lesssim \NormR{\nabla u}{p}{3R}{1},
 \end{equation}
 where the last relation follows from~\eqref{e2.12}. Using the triangle inequality, \eqref{e4.2} follows from
 \begin{align*}
  \NormR{u_1 - \bar u}{p^*}{2R}{1} &\overset{\textrm{Jensen}}{\lesssim} \biggl( \int_{B_{2R}} |u_1 - \bar u|^{p^*} \biggr)^{\frac{1}{p^*}} \overset{\textrm{Sobolev}}{\lesssim} p^* \biggl( \int_{B_{2R}} |\nabla u_1|^p \biggr)^{\frac 1p} 
\\
  &\overset{\textrm{Jensen}}{\lesssim} p^* \NormR{\nabla u}{p}{3R}{1},
 \end{align*}
 where $\bar u$ was chosen such that $\int_{B_{2R}} (u_1 - \bar u) = 0$. For the dependence of the Sobolev constant on $p^*$ that was used here, see for example \cite{SobolevConst}.

 Collecting all the estimates, we have 
 \begin{equation}\label{e4.5}
  \Normm{\nabla u_R}{p^*}{\rr} \lesssim (p^*)^3 \NormR{g}{p^*}{2R}{1} + \frac{(p^*)^3}{R} \NormR{\nabla u}{p}{3R}{1}.
 \end{equation}
For $x \in B_{R/2}$ we have 
 \begin{equation}\nonumber
  \int_{B_1(x)} |\nabla u|^2 = \int_{B_1(x)} |\nabla u_R|^2 \le \int_{B_{\rr}(x)} |\nabla u_R|^2 \lesssim (\rr)^{d} \fint_{B_{\rr}(x)} |\nabla u_R|^2 
 \end{equation}
 where the first relation comes from the fact that $\nabla u = \nabla u_R$ in $B_R$. 
Integrating the $(p^*/2)$-th power of this relation over $B_{R/2}$, together with~\eqref{e4.5} yields~\eqref{e4.1}.
\end{proof}
}

\section{Proof of Proposition \ref{proppsimoment}: Moment bounds for the gradient of the second-order corrector}

\providecommand{\psimom}[1]{\mathbb{E}\Bigg[ \biggl( \fint_{\{|x|\leq 1\}} |\nabla \psi_{ij}|^2 \biggr)^{\frac{#1}{2}} \Bigg]^{\frac{1}{#1}}}

\begin{proof}[Proof of Proposition \ref{proppsimoment}]
 For any $q \in [2,\infty)$, Theorem~\ref{PhiStarBound} implies
 \begin{equation}\label{e5.1}
  \mathbb{E}\big[(\underline{r_*}^d)^q \big]^\frac 1q \le \mathbb{E}\big[ (r_*^d)^q \big]^\frac 1q \lesssim q^\frac{1}{1-\beta},
  \qquad 
  \mathbb{E}\Bigg[\biggl( \fint_{\{|x|\leq 1\}} |\phi|^2+|\sigma|^2 \biggr)^\frac q2 \Bigg]^\frac 1q \lesssim q^{C(d,\lambda,\beta)}.
 \end{equation}
 Focusing first on $\nabla \psi_{ij}$, we can iteratively use Lemma~\ref{lmiteration}, since $\psi_{ij}$ satisfies
 \begin{equation}\nonumber
  -\nabla \cdot (a \nabla \psi_{ij}) = \nabla \cdot ( (a\phi_i - \sigma_i) e_j).
 \end{equation}
 Indeed we see that 
 for any $q < d $ and any $\alpha \in (1,\infty)$ we get from Lemma~\ref{lmiteration} using~\eqref{e5.1} 
 \begin{equation}
 \begin{aligned}\label{p.2}
  &\mathbb{E}\Bigg[\biggl( \fint_{\{|x|\leq 1\}} |\nabla \psi_{ij}|^2 \biggr)^{\frac{q^*}{2\alpha}} \Bigg]^{\frac{\alpha}{q^*}}
  \\
  &\overset{\textrm{H\"older}}\le \mathbb{E}\Big[ (\underline{r_*}^d)^\frac{q^*}{2(\alpha-1)} \Big]^\frac{\alpha-1}{q^*} 
  \mathbb{E}\Bigg[ (\underline{r_*}^d)^{-\frac{q^*}{2}} \biggl( \fint_{\{|x|\leq 1\}} |\nabla \psi_{ij}|^2 \biggr)^{\frac{q^*}{2}}\Bigg]^{\frac{1}{q^*}}
 \\ 
&\overset{\eqref{e5.1},\eqref{e4.0}}{\lesssim} (q^*)^3 \p{ \frac{q^*}{\alpha -1 }}^\frac{1}{2(1-\beta)} 
  \biggl( \mathbb{E}\bigg[ \biggl( \fint_{\{|x|\leq 1\}} |\phi|^2+|\sigma|^2 \biggr)^{\frac{q^*}{2}} \bigg]^\frac{1}{q^*} + \psimom{q}
 \biggr)
\\
&\overset{\eqref{e5.1}}{\lesssim} (q^*)^3  \p{ \frac{q^*}{\alpha -1 }}^\frac{1}{2(1-\beta)} 
 \biggl( (q^*)^{C(d,\lambda,\beta)} + \psimom{q} \biggr).
\end{aligned}\end{equation}
Since the construction of $\nabla \psi_{ij}$ as a stationary field (analogous to the construction of $\nabla \phi$ and $\nabla \sigma$ in \cite{GNO4}) provides $\mathbb{E}[|\nabla \psi_{ij}|^2] \lesssim 1$, we can iterate the above estimate few times to show that 
\begin{equation}\nonumber
\mathbb{E}\Bigg[ \biggl( \fint_{\{|x|\leq 1\}} |\nabla \psi_{ij}|^2 \biggr)^{\frac{d}{2}} \Bigg]^{\frac 1d} \le C(d,\lambda,\beta).
\end{equation}
Finally, given any $p \in (d,\infty)$, we choose $q = \frac{d(p+1)}{d+p+1}$, $\alpha = \frac{p+1}{p}$ so that $q^*  = p+1$ and $\frac{q^*}{\alpha} = p$, and use~\eqref{p.2} to obtain 
 \begin{equation}\nonumber
  \mathbb{E}\Bigg[ \biggl( \fint_{\{|x|\leq 1\}} |\nabla \psi_{ij}|^2 \biggr)^{\frac{p}{2}} \Bigg]^{\frac{1}{p}} \le C(d,\lambda,\beta) p^{C(d,\lambda,\beta)}.
 \end{equation}
 
 Regarding the estimate for the flux corrector, we observe that by~\eqref{EquationPsi} it satisfies an equation with the same structure as equation for $\psi_{ij}$ (in fact even simpler since the operator has constant coefficients), and so the above argument applies verbatim also to~\eqref{EquationPsi}.
\end{proof}

\section{Proof of Proposition \ref{MomentBoundsCorrectorAverages}: Estimate on stochastic moments of averages of the second-order corrector}

\begin{proof}[Proof of Proposition \ref{MomentBoundsCorrectorAverages}]
Estimating the right-hand side of the bound in Proposition \ref{StochasticMomentsSensitivity} by means of Proposition \ref{proppsimoment}, Theorem \ref{PhiStarBound}, and the Caccioppoli inequality $\int_{\{|x|\leq 1\}} |\nabla \phi_i+e_i|^2 \,dx\leq C(\lambda)\int_{\{|x|\leq 2\}} |\phi_i+x_i|^2 \,dx$, we deduce
\begin{align*}
\left(\mathbb{E}~\Big|\Big|\frac{\partial F\nabla(\psi,\Psi)}{\partial a}\Big|\Big|_{CC}^m \right)^{1/m}
\leq
C(d,\lambda) m^{C(d,\beta,\lambda)} r^{-(1-\beta)d/2+1}.
\end{align*}
The $L^p$-version of the LSI in Lemma \ref{LpLSI} now implies the desired result; note that by the vanishing expectation of $\nabla \psi_{ij}$ and $\nabla \Psi_{ijkl}$, the expectation of $F\nabla (\psi,\Psi)$ also vanishes.
\end{proof}

\section{Proof of Lemma \ref{IntegrateLemma} and Theorem \ref{SecondCorrectorEstimate}: Translating estimates on spatial averages of the second-order corrector into estimates on norms}

\begin{proof}[Proof of Lemma \ref{IntegrateLemma}]
Let $v\in H^1$ be a function supported in the cube $[-2^N,2^N]^d$. Denote by $P_n:L^2([-2^N,2^N]^d)\rightarrow L^2([-2^N,2^N]^d)$ the projection operator that to any function $u$ associates a function $P_n u$ which coincides on all subcubes of $[-2^N,2^N]^d$ of the form $(2m_1\cdot 2^n,(2m_1+2)\cdot 2^n) \times \ldots \times (2m_d\cdot 2^n,(2m_d+2)\cdot 2^n)$ (with $m\in \mathbb{Z}^d$) with the average of $u$ on this cube (in particular, $P_n u$ is constant on each such cube). Note that besides the usual properties of projection operators $P_n P_n=P_n$ and $P_n^\ast = P_n$, we have $P_{n}P_{n-1}=P_{n}$ and $P_{n-1}P_n=P_n$. With this notation and these properties, we have for any $u\in L^2_{loc}(\mathbb{R}^d)$
\begin{align*}
&\int_{[-2^N,2^N]^d} (u-P_N u) v \,dx
\\&
=\sum_{n=1}^N \int_{[-2^N,2^N]^d} (P_{n-1}u-P_n u) v \,dx
+\int_{[-2^N,2^N]^d} (u-P_0 u) v\,dx
\\&
=\sum_{n=1}^N \int_{[-2^N,2^N]^d} (P_{n-1}u-P_n u) (P_{n-1} v-P_n v) \,dx
+\int_{[-2^N,2^N]^d} (u-P_0 u) (v-P_0 v)\,dx.
\end{align*}
We therefore obtain by H\"older's inequality
\begin{align}\nonumber
\left|\int_{[-2^N,2^N]^d} (u-P_N u) v \,dx\right|
\leq &\sum_{n=1}^N ||P_{n-1}u-P_n u||_{L^2([-2^N,2^N]^d)} ||P_{n-1}v-P_n v||_{L^2}
\\&
\nonumber
+ ||u-P_0 u||_{L^2([-2^N,2^N]^d)} ||v-P_0 v||_{L^2}.
\end{align}
Note that we have $||P_{n-1}v-P_n v||_{L^2}^2\leq ||v-P_n v||_{L^2}^2$. This entails by Poincar\'e's inequality (applied to $v-P_n v$ on the cubes of side length $2\cdot 2^n$ as well as to $u-P_0 u$ on the cubes of side length $2^0$)
\begin{align*}
&\left|\int_{[-2^N,2^N]^d} (u-P_N u) v \,dx\right|
\lesssim \left(\sum_{n=1}^N 2^n ||P_{n-1}u-P_n u||_{L^2([-2^N,2^N]^d)} + ||\nabla u||_{L^2([-2^N,2^N]^d)} \right) ||\nabla v||_{L^2}.
\end{align*}
Taking the supremum over all $v$ which are supported in $[-2^N,2^N]^d$ and satisfy $||\nabla v||_{L^2}\leq 1$ gives
\begin{align}
\label{EstimateH-1}
\left|\left|u-\fint_{[-2^N,2^N]^d} u\right|\right|_{H^{-1}([-2^N,2^N]^d)} \lesssim \sum_{n=1}^N 2^n ||P_{n-1}u-P_n u||_{L^2([-2^N,2^N]^d)} + ||\nabla u||_{L^2([-2^N,2^N]^d)}.
\end{align}
Notice that for a cube $Q=[-2^n,2^n]$ and its subcube $[0,2^n]^d$ we may construct a vector field $g_{Q,1}$ supported in $\{|x|\leq 2^d 2^n\}$ satisfying
\begin{align*}
\fint_{[0,2^n]^d} u \,dx-\fint_{[-2^n,2^n]^d} u \,dx
=2^n \int g_{Q,1} \cdot \nabla u \,dx
\end{align*}
as well as $(\fint_{\{|x|\leq 2^d 2^n\}} |g_{Q,1}|^{2+1/d} )^{1/(2+1/d)}\lesssim (2^d 2^n)^{-d}$ (and similar vector fields for the other $2^d-1$ dyadic subcubes of $Q=[-2^n,2^n]$; we shall denote these vector fields by $g_{Q,2},\ldots,g_{Q,2^d}$); to see that the dependence on $n$ of this bound is the correct one, note that the vector field for $n\neq 1$ may be obtained from the vector field for $n=1$ by rescaling. Analogously, we may construct such vector fields $g_{Q,i}$ with analogous properties for any cube $Q$ with side length $2\cdot 2^n$ and its $2^d$ dyadic subcubes. Denoting by $\mathcal{Q}_n$ the set of cubes of side length $2\cdot 2^n$ obtained by decomposing the cube $[-2^N,2^N]$ into $2^{d(N-n)}$ cubes, this entails
\begin{align*}
||P_{n-1} u - P_n u||_{L^2([-2^N,2^N]^d)}^2
\leq
\sum_{Q\in \mathcal{Q}_n} \sum_{i=1}^{2^d} 2^{-d} |Q|
\left|2^n \int g_{Q,i} \cdot \nabla u \,dx\right|^2.
\end{align*}
Multiplying both sides with $2^{-d(N+1)}$, raising both sides to the $m/2$-th power, and using Jensen's inequality for the sum (note that we have $2^{-d(N+1)}\sum_{\mathcal{Q}_n} \sum_{i=1}^{2^d} 2^{-d} |Q|=1$), we deduce
\begin{align*}
\big(2^{-d(N+1)/2}||P_{n-1} u - P_n u||_{L^2([-2^N,2^N]^d)}\big)^{m}
\leq
2^{nm} \cdot 2^{-d(N+1)} \sum_{Q\in \mathcal{Q}_n} \sum_{i=1}^{2^d} 2^{-d} |Q|
\left|\int g_{Q,i} \cdot \nabla u \,dx\right|^{m}.
\end{align*}
Taking the expectation, inserting the assumption \eqref{IntLemma.AssumptionGradientAverages}, and raising both sides to the $1/m$-th power, we infer
\begin{align}
\label{EstimateOnLevel}
\mathbb{E}\Big[\big(2^{-d(N+1)/2}||P_{n-1} u - P_n u||_{L^2([-2^N,2^N]^d)}\big)^{m} \Big]^{1/m}
\lesssim K (2^n)^{1-\gamma}.
\end{align}
Together with \eqref{EstimateH-1} and the assumption \eqref{IntLemma.AssumptionGradient}, this yields the desired estimate \eqref{IntLemma.Result2} for the $H^{-1}$ norm.

To see the estimate on the $L^2$ norm, we use the fact that the terms in the decomposition
\begin{align*}
u-\fint_{[-2^N,2^N]^d} u=u-P_N u=(u-P_0 u)+\sum_{n=1}^N (P_{n-1} u -P_n u)
\end{align*}
are mutually orthogonal with respect to the $L^2$ scalar product. This yields
\begin{align*}
\left|\left|u-\fint_{[-2^N,2^N]^d} u\right|\right|_{L^2([-2^N,2^N]^d)}^2 = \sum_{n=1}^N ||P_{n-1}u-P_n u||_{L^2([-2^N,2^N]^d)}^2 + ||u-P_0 u||_{L^2([-2^N,2^N]^d)}^2.
\end{align*}
We now apply the Poincar\'e inequality to the last term to deduce
\begin{align}
\label{EstimateL2}
\left|\left|u-\fint_{[-2^N,2^N]^d} u\right|\right|_{L^2([-2^N,2^N]^d)}^2
\lesssim \sum_{n=1}^N ||P_{n-1}u-P_n u||_{L^2([-2^N,2^N]^d)}^2 + ||\nabla u||_{L^2([-2^N,2^N]^d)}^2.
\end{align}
Making again use of the bound \eqref{EstimateOnLevel} and the assumption \eqref{IntLemma.AssumptionGradient}, we infer \eqref{IntLemma.Result2}. Note that due to the squares present in the bound \eqref{EstimateL2} (as opposed to the bound \eqref{EstimateH-1}),  in the critical case $\gamma=1$ we now obtain a factor $\sqrt{N}\sim \sqrt{\log 2^N}$ in the estimate for $||u-\fint_{[-2^N,2^N]^d} u ||_{L^2([-2^N,2^N]^d)}$ (as opposed to a factor $N\sim \log 2^N$ in the bound for $||u-\fint_{[-2^N,2^N]^d} u ||_{H^{-1}([-2^N,2^N]^d)}$ for $\gamma=2$).
\end{proof}

\begin{proof}[Proof of Theorem \ref{SecondCorrectorEstimate}]
Theorem \ref{SecondCorrectorEstimate} is basically a consequence of Proposition \ref{MomentBoundsCorrectorAverages}, Proposition~\ref{proppsimoment}, and Lemma \ref{IntegrateLemma}: The combination of Proposition \ref{MomentBoundsCorrectorAverages} and Proposition~\ref{proppsimoment} with Lemma \ref{IntegrateLemma} (the lemma being applied with $K:=C(d,\lambda,\beta)m^{C(d,\lambda,\beta)}$ and $\gamma:=(1-\beta)d/2-1$) yields the estimate
\begin{align*}
&\mathbb{E}\left[
\left(\fint_{\{|x|\leq r\}} \Big|(\psi,\Psi)-\fint_{\{|x|\leq r\}}(\psi,\Psi)\Big|^2 \,dx\right)^{m/2} \right]^{1/m}
\\&
\leq
\begin{cases}
C_3 C(d,\lambda,\beta) m^{C(d,\lambda,\beta)} r^{2-(1-\beta)d/2} &\text{for }(1-\beta)d/2<2,
\\
C_3 C(d,\lambda,\beta) m^{C(d,\lambda,\beta)} \sqrt{\log(r+1)} &\text{for }(1-\beta)d/2=2,
\\
C_3 C(d,\lambda,\beta) m^{C(d,\lambda,\beta)} &\text{for }(1-\beta)d/2>2,
\end{cases}
\end{align*}
for any $m\geq 1$. By elementary theory of integration, this implies the desired bound with stretched exponential moments of the random constant $\mathcal{C}_r(a)$.
\end{proof}

\bibliographystyle{abbrv}
\bibliography{second_order_corrector}

\end{document}